\numberwithin{equation}{section}
\newtheorem{theorem}{Theorem}[section]
\newtheorem{corollary}[theorem]{Corollary}
\newtheorem{lemma}[theorem]{Lemma}
\newtheorem{proposition}[theorem]{Proposition}
\theoremstyle{definition}
\newtheorem{remark}[theorem]{Remark}
\theoremstyle{definition}
\theoremstyle{definition}
\newtheorem{assumption}[theorem]{Assumption}
\def\dashint{\operatorname%
{\,\,\text{\bf--}\kern-.98em\DOTSI\intop\ilimits@\!\!}}
\def\bF{\mathbb{F}}
\def\bH{\mathbb{H}}
\def\bR{\mathbb{R}}
\def\cA{\mathcal{A}}
\def\cB{\mathcal{B}}
\def\cC{\mathcal{C}}
\def\cD{\mathcal{D}}
\def\cH{\mathcal{H}}
\def\cM{\mathcal{M}}
\def\cS{\mathcal{S}}
\begin{document}
\title[time fractional parabolic equations]{$L_p$-estimates for time fractional parabolic equations in divergence form with measurable coefficients}

\author[H. Dong]{Hongjie Dong}
\address[H. Dong]{Division of Applied Mathematics, Brown University, 182 George Street, Providence, RI 02912, USA}

\email{Hongjie\_Dong@brown.edu}

\thanks{H. Dong was partially supported by the NSF under agreement DMS-1600593.}

\author[D. Kim]{Doyoon Kim}
\address[D. Kim]{Department of Mathematics, Korea University, 145 Anam-ro, Seongbuk-gu, Seoul, 02841, Republic of Korea}

\email{doyoon\_kim@korea.ac.kr}

\thanks{D. Kim was supported by Basic Science Research Program through the National Research Foundation of Korea (NRF) funded by the Ministry of Education (2016R1D1A1B03934369).}

\subjclass[2010]{35R11, 26A33, 35R05}

\keywords{parabolic equation, time fractional derivative, measurable coefficients, small mean oscillations}

\begin{abstract}
In this paper, we establish $L_p$ estimates and solvability for time fractional divergence form parabolic equations in the whole space when leading coefficients are merely measurable in one spatial variable and locally have small mean oscillations with respect to the other variables. The corresponding results for equations on a half space are also derived.
\end{abstract}

\maketitle

\section{Introduction}

This paper is concerned with divergence form parabolic equations with a non-local type time derivative term of the form
\begin{equation}
							\label{eq0525_01}
- \partial_t^\alpha u + D_i(a^{ij} D_{j} u+a^i u) + b^i D_i u + c u = D_i g_i + f
\end{equation}
in the whole space $(0,T) \times \bR^d$ or on the half space $(0,T) \times \bR^d_+$, where
$$
\bR^d_+=\{x=(x_1,x')=(x_1,\ldots,x_d)\in \bR^d:x_1>0\},
$$
with the zero initial condition $u(0,\cdot)=0$. Here $\partial_t^\alpha u$ is the Caputo fractional derivative of order $\alpha \in (0,1)$:
$$
\partial_t^\alpha u(t,x) = \frac{1}{\Gamma(1-\alpha)} \frac{d}{dt} \int_0^t (t-s)^{-\alpha} \left[ u(s,x) - u(0,x) \right] \, ds.
$$
We refer the reader to Section \ref{sec2} for a precise definition of $\partial_t^\alpha u$.
In the half space case, we impose the zero Dirichlet boundary condition $u(t,x)=0$ when $t\in (0,T)$, $x_1=0$,  and $x'\in \bR^{d-1}$.
Equations of this type have been used, for example, to model fractional diffusion in plasma turbulence. Recently, there are many interesting work about parabolic equations with non-local time derivatives. For instance, De Giorgi--Nash--Moser type H\"older estimates for time fractional parabolic equations were obtained by Zacher \cite{MR3038123}, and for parabolic equations with fractional operators in both $t$ and $x$ by Allen et al. \cite{MR3488533}. Sobolev type estimates for non-divergence form parabolic equations with non-local time derivatives were investigated in \cite{MR3581300,arXiv:1806.02635}.

The current paper can be viewed as a continuation of \cite{arXiv:1806.02635}, where the corresponding non-divergence form equations in the whole space are studied, when leading coefficients are merely measurable in the time variable and locally have small mean oscillations in the spatial variables. In this paper, we assume that the leading coefficients $a^{ij}$ are uniformly elliptic, not necessarily symmetric, merely measurable in one spatial variable, and locally have small mean oscillations with respect to the other variables. Our main result Theorem \ref{main_thm} reads that, under these conditions, for any given
$$
f,g=(g_1,\ldots,g_d) \in L_p\left((0,T) \times \bR^d \right),
$$
there exists a unique solution $u\in \cH^{\alpha,1}_{p,0}$ to the equation \eqref{eq0525_01} in $(0,T) \times \bR^d$ with the estimate
\begin{align*}
&\|Du\|_{L_p\left((0,T) \times \bR^d \right)}+\|u\|_{L_p\left((0,T) \times \bR^d \right)}
+ \|\partial_t^\alpha u\|_{\bH^{-1}_p\left((0,T) \times \bR^d \right)}\\
&\le N \|f\|_{L_p\left((0,T) \times \bR^d \right)}
+N \|g\|_{L_p\left((0,T) \times \bR^d \right)},
\end{align*}
where the constant $N$ is independent of $f$, $g$, and $u$.
See Section \ref{sec2} for the definitions of various function spaces.
The condition that the coefficients are allowed to be measurable in $x_1$ enables us to deduce the corresponding result in the half space case simply by using the argument of odd/even extensions.
Note that this extension argument cannot be applied if the coefficients are continuous or even have small mean oscillations with respect to all the spatial variables.

The Sobolev theory for parabolic equations with the usual time derivative $u_t$ has been studied extensively in the literature. In view of the well-known counterexamples, see \cite{MR3488249} for the parabolic case, in general
there does not exist a solvability theory for equations with bounded and measurable coefficients. Therefore, many efforts have been made to treat particular types of discontinuous coefficients. An important class of discontinuous coefficients is the class of vanishing mean oscillations (VMO), with which the solvability results in Sobolev spaces for second order linear equations were established in
early 1990s by Italian schools. The main technical tool is the theory of singular
integrals, in particular, the Calder\'on--Zygmund theorem and the Coifman--Rochberg--Weiss commutator theorem. This approach, however, usually does not allow measurable coefficients because one needs smoothness of the
corresponding fundamental solutions.

Among others, there are two alternative approaches which do not involve singular integrals.  In \cite{MR2304157}, Krylov gave a unified approach for both divergence and non-divergence linear elliptic and parabolic equations in the whole space, with coefficients which are in the class of VMO with respect to the space variables and are allowed to be merely measurable in the time variable. See \cite{MR2352490} for mixed-norm $L_p(L_q)$ estimates. His proof relies on mean oscillation estimates, and uses the Hardy--Littlewood maximal function theorem and the Fefferman--Stein sharp function theorem. Another approach was given earlier by Caffarelli and Peral \cite{MR1486629}, which is based on a level set argument together with a ``crawling of ink spots'' lemma originally due to Safonov and Krylov \cite{MR563790,MR579490}. With these approaches, VMO coefficients are treated in a straightforward manner by a perturbation argument. Besides that they are singular integral free, another advantage of these approaches is their flexibility: they can be applied to both divergence and non-divergence or even nonlocal equations with coefficients which are very irregular in some of the independent variables. See, for instance, \cite{MR2069724,MR2338417,MR2300337,MR2800569,MR2764911,MR2863859,arXiv:1806.02635} and the references therein. The class of coefficients considered in this paper was first treated by the second named author and Krylov in \cite{MR2338417,MR2300337} for elliptic and parabolic equations in non-divergence form, and later also in \cite{MR2601069} (without symmetric condition on $a^{ij}$), \cite{MR2680179} (with symmetric $a^{ij}$), and \cite{MR2800569,MR2764911,MR2835999, MR2968240} for elliptic and parabolic equations and systems in divergence form.

In this paper, we adapted the level set argument in \cite{MR1486629} to equations of the form \eqref{eq0525_01} with a nonlocal time derivative term, following the scheme in \cite{arXiv:1806.02635}.
The main difficulty arises in the key step where one needs to estimate local $L_\infty$ estimates of the gradient of solutions to locally homogeneous equations.
Starting from the $L_2$-estimate, which can be derived by integration by parts, and applying the Sobolev type embedding results proved in Appendix, we are only able to show that the gradient are in $L_{p_1}$ for some $p_1>2$ instead of $L_\infty$  (cf. Lemma \ref{lem0823_2}).
Nevertheless, this allows us to obtain the $L_p$ estimate and solvability for any $p\in [2,p_1)$ and $a^{ij}=a^{ij}(x_1)$ by using a modified level set type argument. Then we repeat this procedure and iteratively increase the exponent $p$ for any $p\in [2,\infty)$. In the case when $p\in (1,2)$, we apply a duality argument.
For equations with leading coefficients being measurable in $x_1$ and locally having small mean oscillations in $(t,x')$, we use a perturbation argument by incorporating the small mean oscillations of the coefficients into local mean oscillation estimates of solutions with compact support. After that, the standard partition of unity argument completes the proof. It is worth noting that to apply the Sobolev type embedding results, we need to estimate the $\bH^{\alpha,1}_{2}$ norm of the spatial derivatives of solutions.
Compared to \cite{arXiv:1806.02635}, here the main obstacle is that we cannot estimate the whole gradient $D_{x} u$ in the $\bH^{\alpha,1}_2$ space due to the lack of regularity of $a^{ij}$ in the $x_1$ direction. To this end, we also exploit an idea in \cite{MR2800569} by considering $D_{x'} u$ and a certain linear combination of the first spatial derivatives, instead of $D_{x} u$. See \eqref{eq0823_06} in the proof of Lemma \ref{lem0823_2}.

The remaining part of the paper is organized as follows.
In the next section, we introduce some notation and state the main results of the paper. In Section \ref{Sec3}, we define function spaces for fractional time derivatives and state some of their properties. In Section \ref{sec4}, we obtain the $L_2$ estimate and solvability for equations with coefficients depending only on $x_1$, and then from them we derive certain local estimates, which are used later in the iteration argument. Section \ref{sec5} is devoted to the estimates of level sets of gradient of solutions. We present the proofs of the main theorems in Section \ref{sec6}. In Appendix, we establish several Sobolev type embedding theorems involving time fractional derivatives adapted to our setting, which are of independent interest.

\section{Notation and main results}
							\label{sec2}

\subsection{Notation}
We first introduce some notation used throughout the paper.
For $\alpha \in (0,1)$ and $S \in \bR$, we denote
$$
I^\alpha_S \varphi(t) = \frac{1}{\Gamma(\alpha)} \int_S^t (t-s)^{\alpha - 1} \varphi(s) \, ds
$$
for $\varphi \in L_1(S,\infty)$,
where
$$
\Gamma(\alpha) = \int_0^\infty t^{\alpha - 1} e^{-t} \, dt.
$$
In \cite{MR1544927} $I^\alpha \varphi$ is called the $\alpha$-th integral of $\varphi$ with origin $S$.
In this paper we often write $I^\alpha$ instead of $I_0^\alpha$ for the $\alpha$-th integral with origin $0$. For a sufficiently smooth function $\varphi(t)$, we set
$$
D_t^\alpha \varphi(t) = \frac{d}{dt} I^{1-\alpha}_S \varphi(t) = \frac{1}{\Gamma(1-\alpha)} \frac{d}{dt} \int_{S}^t (t-s)^{-\alpha} \varphi(s) \, ds,
$$
and
\begin{align*}
\partial_t^\alpha \varphi(t) &= \frac{1}{\Gamma(1-\alpha)} \int_{S}^t (t-s)^{-\alpha} \varphi'(s) \, ds\\
&= \frac{1}{\Gamma(1-\alpha)} \frac{d}{dt} \int_{S}^t (t-s)^{-\alpha} \left[ \varphi(s) - \varphi(S) \right] \, ds.
\end{align*}
Note that if $\varphi(S) = 0$, then
$$
D_t^\alpha \varphi =  \partial_t(I_S^{1-\alpha} \varphi) = \partial_t^\alpha \varphi.
$$
Since there is no information about the origin $S$ in the notation $D_t^\alpha$ and $\partial_t^\alpha$, we sometimes write $\partial_t I_S^{1-\alpha}$ in place of $D_t^\alpha$ (or $\partial_t^\alpha$ whenever appropriate) to indicate the origin.

Let $\cD$ be a subset (not necessarily open) of $\bR^k$, $k \in \{1,2, \ldots\}$.
By $\varphi \in C_0^\infty(\cD)$,
we mean that $\varphi$ is infinitely differentiable in $\cD$ and is supported in the intersection of $\cD$ and a bounded open subset in $\bR^d$.
In particular, $\varphi$ may not be zero on the boundary of $\cD$, unless $\cD$ is an open subset of $\bR^k$.
For $\alpha \in (0,1)$, we denote
$$
Q_{R_1,R_2}(t,x) = (t-R_1^{2/\alpha}, t) \times B_{R_2}(x) \quad \text{and} \quad Q_R(t,x)=Q_{R,R}(t,x).
$$
We often write $B_R$ and $Q_R$ instead of $B_R(0)$ and $Q_R(0,0)$, respectively.

In this paper, we assume that there exists $\delta \in (0,1)$ such that
$$
a^{ij}(t,x)\xi_j \xi_j \geq \delta |\xi|^2,\quad |a^{ij}| \leq \delta^{-1}
$$
for any $\xi \in \bR^d$ and $(t,x) \in \bR \times \bR^d$.
For a domain $\Omega$ in $\bR^d$ and $T \in (S, \infty)$, we say that $u \in \cH_{p,0}^{\alpha,1}\left((S,T) \times \Omega \right)$ satisfies the divergence form equation
$$
- \partial_t^\alpha u + D_i \left( a^{ij} D_j u +a^i u\right) + b^i D_i u + c u = D_i g_i + f
$$
in $(S,T) \times \Omega$,
where $g_i, f \in L_p\left((S,T) \times \Omega \right)$ if
\begin{multline}
							\label{eq0103_01}
\int_S^T \int_{\Omega} I_S^{1-\alpha}u \, \varphi_t \, dx \, dt + \int_S^T \int_{\Omega} \left(- a^{ij}D_j u D_i \varphi - a^i u D_i\varphi + b^i D_i u \varphi + c u \varphi \right) \, dx \, dt
\\
= \int_S^T \int_{\Omega} \left(f \varphi - g_i D_i \varphi \right) \, dx \, dt
\end{multline}
for all $\varphi \in C_0^\infty\left([S,T) \times \Omega\right)$. Note that we require $\varphi(T,x) = 0$.
See Section \ref{Sec3} for the definition and some properties of $\cH_{p,0}^{\alpha,1}\left((S,T) \times \Omega\right)$. In particular, the zero initial condition at $t = S$ is implicitly imposed because $u$ belongs to $\cH_{p,0}^{\alpha,1}\left((S,T) \times \Omega\right)$.
For the Dirichlet problem in the half space case,  we impose the zero lateral boundary condition on $u$ when $\Omega = \bR^d_+$.
See Theorem \ref{main_thm}.
Since $I_S^{1-\alpha}u, Du, u \in L_p\left((S,T) \times \Omega\right)$ for $u \in \cH_{p,0}^{\alpha,1}\left((S,T) \times \Omega\right)$, the test function $\varphi$ can be chosen from $W_q^{1,1}\left((S,T) \times \Omega\right)$, $1/p+1/q=1$, satisfying $\varphi(t,x)|_{(S,T) \times \partial\Omega} = 0$ and $\varphi(T,x) = 0$.
The equality \eqref{eq0103_01} also holds with $t_0$ in place of $T$ for any $t_0 \in (S,T)$ if $\varphi(t_0,x) = 0$.

\subsection{Main results}
Our first main result is for equations with coefficients $a^{ij}$ depending only on $x_1$ without any regularity assumptions.

\begin{theorem}
							\label{thm0412_1}
Let $\lambda \geq 0$, $\alpha \in (0,1)$, $T \in (0,\infty)$, $a^{ij} = a^{ij}(x_1)$, and $p \in (1,\infty)$.
There exists $N=N(d,\delta,\alpha,p)$ such that, for any
$u \in \cH_{p,0}^{\alpha,1}(\bR^d_T)$ satisfying
\begin{equation}
							\label{eq0411_03}
-\partial_t^\alpha u + D_i \left( a^{ij}(x_1) D_j u \right) - \lambda u
= D_i g_i + f
\end{equation}
in $\bR^d_T := (0,T) \times \bR^d$, where $g_i \in L_p(\bR^d_T)$, $i=1,\ldots,d$, and $f \in L_p(\bR^d_T)$, we have
\begin{equation}
							\label{eq0411_04}
\|D u\|_{L_p(\bR^d_T)} + \sqrt{\lambda} \|u\|_{L_p(\bR^d_T)} \leq N \|g_i\|_{L_p(\bR^d_T)} + \frac{N}{\sqrt{\lambda}} \|f\|_{L_p(\bR^d_T)},
\end{equation}
provided that $\lambda > 0$.
If $\lambda = 0$, we have
\begin{equation}
							\label{eq0904_06}
\|D u\|_{L_p(\bR^d_T)} + \|u\|_{L_p(\bR^d_T)} \leq N \|g_i\|_{L_p(\bR^d_T)} + N \|f\|_{L_p(\bR^d_T)},
\end{equation}
where $N = N(d,\delta,\alpha,p,T)$.
If $\lambda = 0$ and $f = 0$, we have
\begin{equation}
							\label{eq0905_04}
\|D u\|_{L_p(\bR^d_T)} \leq N \|g_i\|_{L_p(\bR^d_T)},
\end{equation}
where $N=N(d,\delta,\alpha,p)$, but independent of $T$.

Moreover, for any $g_i \in L_p(\bR^d_T)$, $i=1,\ldots,d$, and $f \in L_p(\bR^d_T)$, there exists a unique $u \in \cH_{p,0}^{\alpha,1}(\bR^d_T)$ satisfying \eqref{eq0411_03} and the estimate \eqref{eq0411_04} or \eqref{eq0904_06} depending on whether $\lambda > 0$ or $\lambda =0$.
\end{theorem}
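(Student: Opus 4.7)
The plan is to proceed in the order sketched in the introduction, establishing first the a priori estimates and then using them to obtain solvability via the method of continuity. The backbone is a mean oscillation / level set argument, but the novelty is that it must be iterated because we cannot directly reach $L_\infty$ gradient bounds for locally homogeneous equations.

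Step one is the $L_2$ case. For this I would take $u$ itself (or more precisely a suitable mollification/truncation of $u$ compatible with the zero initial condition) as a test function in the definition \eqref{eq0103_01}. The key ingredient is the fractional energy identity, essentially
\[
\int_0^T \int_{\bR^d} (\partial_t^\alpha u) u \, dx \, dt \geq c_\alpha \bigl\| I^{(1-\alpha)/2} u \bigr\|^2_{L_2((0,T)\times \bR^d)} \geq 0,
\]
combined with the uniform ellipticity of $a^{ij}$. This yields \eqref{eq0411_04}, \eqref{eq0904_06}, and \eqref{eq0905_04} for $p=2$, with the $T$-independence in the case $f=0$ being visible because no Gronwall-type argument on $\|u\|_{L_2}$ is needed.

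Step two is to bootstrap to $p \in [2,\infty)$. Here I follow the scheme of the introduction. First, for the locally homogeneous equation one produces a Caccioppoli-type inequality and, through the Sobolev embedding from the Appendix applied to $D_{x'}u$ and to a particular linear combination of first derivatives (as flagged by the authors via \eqref{eq0823_06}), upgrades $D u \in L_2$ to $D u \in L_{p_1}$ for some $p_1>2$ depending only on $d,\delta,\alpha$. With this partial higher integrability in hand, one runs the Caffarelli--Peral level set argument adapted to the fractional-time cylinders $Q_R$. The standard step — splitting a solution into a homogeneous part (controlled in $L_\infty$) plus a perturbation — is replaced by a $L_{p_1}$ Chebyshev inequality on the homogeneous part; this yields the $L_p$ estimate \eqref{eq0411_04} (with $\lambda=0$) for all $p\in [2,p_1)$. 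Because the $L_{p_1}$ bound on the homogeneous part itself depends only on its $L_2$ norm in a larger cylinder, and the $L_2$ theory is already established in any $L_p$ for $p\in[2,p_1)$ newly obtained, one iterates: each pass increases the allowed $p$ up to the next exponent coming from the embedding, and finitely many iterations cover $[2,\infty)$. The $\lambda$-dependent form \eqref{eq0411_04} is extracted by the standard $\pm\lambda u$ trick and rescaling $t\mapsto \lambda^{1/\alpha} t$; the $T$-independence of \eqref{eq0905_04} is preserved throughout because no absorption of $\|u\|_{L_p}$ on the right-hand side is required when $f=0$.

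Step three covers $p\in (1,2)$ by duality. Given $g_i, f$ in $L_p$, one tests against the solution of the formal adjoint of \eqref{eq0411_03} with $L_q$ right-hand side ($1/p+1/q=1$, so $q>2$), which is itself a divergence form fractional equation but with the time derivative reversed to a right Caputo derivative, and whose $L_q$ theory was just established. The adjoint equation enjoys the same $a^{ij}(x_1)$ structure, so this is symmetric. Care is needed with the adjoint initial/terminal condition, which corresponds to the terminal vanishing of the test function in \eqref{eq0103_01}; this is exactly why the spaces $\cH^{\alpha,1}_{p,0}$ are set up with zero initial data.

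Finally, for existence and uniqueness, uniqueness is immediate from the a priori bound applied to the difference. For existence I would use the method of continuity between the constant-coefficient operator $-\partial_t^\alpha + \Delta - \lambda$ (for which solvability follows from, e.g., the non-divergence theory of \cite{arXiv:1806.02635} or a direct Fourier/Mikhlin argument on the half-line of time) and the general operator $-\partial_t^\alpha + D_i(a^{ij}(x_1) D_j \cdot)-\lambda$, along the path $(1-\tau)(-\partial_t^\alpha+\Delta-\lambda)+\tau(-\partial_t^\alpha+D_i(a^{ij}D_j\cdot)-\lambda)$; the a priori estimate \eqref{eq0411_04} is uniform in $\tau\in[0,1]$, which is what the method requires. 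The case $\lambda=0$ is obtained by first solving for $\lambda>0$ and then passing to the limit using \eqref{eq0904_06} to keep $\|Du\|_{L_p}+\|u\|_{L_p}$ uniformly bounded on the fixed time interval $(0,T)$.

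The main obstacle I expect is step two, specifically the local higher integrability of $Du$: because $a^{ij}$ has no regularity in $x_1$, one cannot hope to bound the full gradient $Du$ in $\bH^{\alpha,1}_2$, and the workaround — controlling $D_{x'}u$ together with the flux-like combination $a^{1j}D_j u$ via the equation — must be calibrated so that the resulting embedding exponent $p_1$ is strictly greater than $2$ and independent of the iteration step, so that iteration actually terminates after finitely many rounds.
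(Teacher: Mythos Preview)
Your proposal is correct and follows essentially the same route as the paper: the $L_2$ case via the energy inequality (Proposition \ref{prop0720_1}), the iteration from $p_0$ to $p_1>p_0$ using the level-set argument of Section \ref{sec5} together with the local higher integrability of $Du$ obtained from $D_{x'}u$ and the flux $a^{1j}D_ju$ (Lemma \ref{lem0823_2}), duality for $p\in(1,2)$, and the method of continuity for existence. One minor point: your passage from \eqref{eq0905_04} to the $\lambda$-dependent estimate \eqref{eq0411_04} via ``the $\pm\lambda u$ trick and rescaling $t\mapsto\lambda^{1/\alpha}t$'' is not quite how the paper does it and would not by itself produce the correct $\sqrt{\lambda}$ scaling; the paper uses S.~Agmon's idea (introducing an auxiliary spatial variable so that $\lambda u$ becomes a second-order term) to obtain \eqref{eq0411_04} for large $\lambda$, and then a dilation for $\lambda$ below the resulting threshold---this is the step you should make precise.
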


\begin{remark}
                    \label{rem2.2}
From \eqref{eq0411_04} for $\lambda > 0$ and \eqref{eq0904_06} for $\lambda=0$, we also get \eqref{eq0904_06} with $N$ independent of $\lambda$ for $u$ satisfying \eqref{eq0411_03} with any $\lambda\ge 0$. Indeed, for sufficiently small $\lambda\le \varepsilon:=1/(2N)$ with $N$ being the constant in \eqref{eq0904_06} for $\lambda=0$, we can move the term $\lambda u$ to the right-hand side of the equation, and then apply \eqref{eq0904_06} for $\lambda=0$ and absorb the term $N\lambda \|u\|$ on the right-hand side of the inequality. For $\lambda> \varepsilon$, we apply \eqref{eq0411_04} to get
$$
\|D u\|_{L_p(\bR^d_T)} + \sqrt{\varepsilon} \|u\|_{L_p(\bR^d_T)} \leq N \|g_i\|_{L_p(\bR^d_T)} + \frac{N}{\sqrt{\varepsilon}} \|f\|_{L_p(\bR^d_T)}.
$$
\end{remark}

We also consider more general operators with lower-order terms and with coefficients depending on both $t$ and $x$. In this case, we impose the following partially small BMO condition on the leading coefficients.
\begin{assumption}[$\gamma_0$]
                        \label{assump2.2}
There is a constant $R_0\in (0,1]$ such that for each parabolic cylinder $Q_r(t,x)$ with $r\le R_0$ and $(t,x)\in \bR^{d+1}$, we have
$$
\sup_{i,j} \dashint_{\!x_1 - r}^{\,\,\,x_1+r} \dashint_{Q'_r(t,x')} \left|a^{ij}(s,y_1,y') - \bar{a}^{ij}(y_1)\right| \, dy' ds d y_1 \leq \gamma_0,
$$
where
$$
Q'_r(t,x') = (t-r^{2/\alpha},t) \times B_r'(x') = (t-r^{2/\alpha},t) \times \{y' \in \bR^{d-1}: |x'-y'| < r \},
$$
$$
\bar{a}^{ij}(y_1) = \dashint_{Q_r'(t,x)} a^{ij}(\tau,y_1,z') \, dz' \, d\tau.
$$
\end{assumption}

We also assume that there exists a positive constant $K$ such that the lower-order coefficients $a^i$, $b^i$, and $c$ satisfy
\begin{equation}
							\label{eq1221_01}
|a^i| \leq K, \quad |b^i|\le K,\quad |c|\le K.\end{equation}

\begin{theorem}
							\label{main_thm}
Let $\alpha \in (0,1)$, $T \in (0,\infty)$, $p \in (1,\infty)$, and $\Omega = \bR^d$ or $\Omega = \bR^d_+$.
There exists $\gamma_0\in (0,1)$ depending only on $d$, $\delta$, $\alpha$, and $p$, such that, under Assumption \ref{assump2.2} ($\gamma_0$), the following hold. Suppose that $u \in \cH_{p,0}^{\alpha,1}(\Omega_T)$, $\Omega_T = (0,T) \times \Omega$, satisfies
\begin{equation}
							\label{eq0411_03c}
\left\{
\begin{aligned}
-\partial_t^\alpha u + D_i\left( a^{ij} D_j u + a^i u \right) + b^i D_i u + cu &= D_i g_i + f \quad \text{in} \,\,\, \Omega_T
\\
u(t,0,x') =& 0 \quad \text{on} \,\,\, (0,T) \times\partial\Omega \quad \text{if} \,\,\, \Omega = \bR^d_+,
\end{aligned}
\right.
\end{equation}
where $g_i \in L_p(\Omega_T)$, $i=1,\ldots,d$, and $f \in L_p(\Omega_T)$.
Then
\begin{equation}
							\label{eq0411_04c}
\|u\|_{\cH_p^{\alpha,1}(\Omega_T)} \leq N \|g_i\|_{L_{p}(\Omega_T)} + N \|f\|_{L_{p}(\Omega_T)},
\end{equation}
where $N = N(d,\delta,\alpha,p,K, R_0,T)$.
Moreover, for any $g_i \in L_p(\Omega_T)$, $i=1,\ldots,d$, and $f \in L_{p}(\Omega_T)$, there exists a unique $u \in \cH_{p,0}^{\alpha,1}(\Omega_T)$ satisfying \eqref{eq0411_03c} and \eqref{eq0411_04c}.
\end{theorem}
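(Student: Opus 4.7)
The plan is to bootstrap from the $x_1$-only case in Theorem \ref{thm0412_1} via a perturbation argument driven by the partial small BMO condition of Assumption \ref{assump2.2}($\gamma_0$), followed by localization and, in the half space case, reflection.

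\textbf{Step 1 (Reductions).} First I would reduce to $\Omega = \bR^d$ and $a^i=b^i=c=0$. The half space case is handled by extending $u$, $f$, and $g_1$ oddly and $g_j$ ($j\ge 2$) evenly across $\{x_1=0\}$, and extending $a^{ij}$ with $\{i,j\}\subset\{2,\ldots,d\}$ or $i=j=1$ evenly, and $a^{1j}, a^{j1}$ ($j\ge 2$) oddly; since $a^{ij}$ is allowed to be merely measurable in $x_1$, Assumption \ref{assump2.2}($\gamma_0$) survives the extension with $\gamma_0$ possibly doubled. The lower-order coefficients are absorbed by moving $a^i u$, $b^i D_i u$, $cu$ to the right-hand side, applying the a priori estimate with large $\lambda$ (available after replacing the equation by $-\partial_t^\alpha u+\cdots-\lambda u$) as in Remark \ref{rem2.2}, and then removing $\lambda u$ by a Caputo-analog Gronwall argument (equivalently by iterating over short time intervals).

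\textbf{Step 2 (Local mean oscillation estimate).} On a cylinder $Q_r(t_0,x_0)$ with $r\le R_0$ I freeze $(t,x')$ in the coefficients by taking $\bar a^{ij}(x_1)$ to be the $Q_r'$-average from Assumption \ref{assump2.2}, and decompose $u = w+v$ on a slightly enlarged cylinder, where $w\in \cH^{\alpha,1}_{p,0}$ solves
\begin{equation*}
-\partial_t^\alpha w + D_i\bigl(\bar a^{ij}(x_1) D_j w\bigr) = D_i\bigl[(\bar a^{ij}-a^{ij}) D_j u + g_i\bigr] + f
\end{equation*}
with zero initial data and $v=u-w$ solves the homogeneous $x_1$-only equation. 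Theorem \ref{thm0412_1} applied to $w$, together with H\"older's inequality exploiting the small BMO bound, yields
\begin{equation*}
\|Dw\|_{L_p} \le N\gamma_0^{\sigma}\|Du\|_{L_p} + N\|g\|_{L_p} + N\|f\|_{L_p}
\end{equation*}
for some $\sigma>0$. For $v$, the local estimate of Lemma \ref{lem0823_2} gives an $L_{p_1}$-bound ($p_1>p$) for $Dv$ in terms of its $L_p$-norm on a larger cylinder; this rests on controlling $D_{x'}v$ and the conormal combination $\sum_j \bar a^{1j}(x_1)D_j v$ in $\bH^{\alpha,1}_2$ and then invoking the Sobolev embedding from the Appendix. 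Combining the two pieces produces a pointwise oscillation bound
\begin{equation*}
\bigl(\,|Du-(Du)_{Q_r(t_0,x_0)}|\,\bigr)_{Q_r(t_0,x_0)} \le N\gamma_0^{\sigma}\,\cM(|Du|^{p_0})^{1/p_0}(t_0,x_0) + N\,\cM(|g|^{p_0}+|f|^{p_0})^{1/p_0}(t_0,x_0)
\end{equation*}
for some $p_0\in(1,p)$, where $\cM$ is the parabolic maximal function.

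\textbf{Step 3 (Level sets, partition of unity, solvability).} The level-set / sharp-function argument of Section \ref{sec5}, in the spirit of Caffarelli--Peral, converts this oscillation estimate into a global $L_p$-bound; choosing $\gamma_0$ sufficiently small allows the $\gamma_0^\sigma\|Du\|_{L_p}$ term to be absorbed into the left-hand side. A partition-of-unity argument in the spatial variable, using that Assumption \ref{assump2.2}($\gamma_0$) only needs to be tested on cylinders of radius $\le R_0$, pieces the local estimates into \eqref{eq0411_04c}. Existence then follows by the method of continuity along the homotopy $\theta a^{ij}+(1-\theta)\delta^{ij}$, which preserves both ellipticity and Assumption \ref{assump2.2}($\gamma_0$), with solvability at $\theta=0$ coming from Theorem \ref{thm0412_1}.

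\textbf{Main obstacle.} The most delicate step is Step 2, specifically the $L_{p_1}$ gradient estimate for the homogeneous $x_1$-only equation. Because $a^{ij}(x_1)$ need not be differentiable, one cannot differentiate the equation in $x_1$, so $D_1 v$ has no a priori regularity gain. The remedy, following the idea in \cite{MR2800569} and \eqref{eq0823_06} of Lemma \ref{lem0823_2}, is to work instead with $D_{x'}v$ (which satisfies an equation of the same form since $D_{x'}$ commutes with multiplication by $\bar a^{ij}(x_1)$) and the conormal quantity $\sum_j \bar a^{1j}(x_1)D_j v$ (whose equation follows from the divergence-form structure). One obtains gradient control in $L_{p_1}$ rather than $L_\infty$; this forces the level-set argument to be run iteratively, raising the exponent from $p=2$ up through each admissible $p_1$, with a duality step to reach $p\in(1,2)$.
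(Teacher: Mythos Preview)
Your plan follows the paper's route closely, but the displayed oscillation inequality in Step~2 hides the one genuinely nonstandard point. Because $\partial_t^\alpha$ is nonlocal in time, the local $L_{p_1}$ bound for $Dv$ coming out of Lemma~\ref{lem0823_2} carries a tail term $\sum_{k\ge 0}2^{-k\alpha}\bigl(\dashint_{t_0-(2^{k+1}+1)R^{2/\alpha}}^{\,t_0}\dashint_{B_R}|Du|^{p_0}\bigr)^{1/p_0}$ (see Proposition~\ref{prop0406_1} and its extension Proposition~\ref{prop0515_1}); this is exactly the $G$ term you omit. A standard parabolic maximal function $\cM$ will not dominate this tail, and the paper's level-set argument (Lemma~\ref{lem6.3} and Proposition~\ref{prop1218_1}) therefore uses the \emph{strong} maximal function $\cS\cM$ over anisotropic cylinders $Q_{R_1,R_2}$, as defined in \eqref{eq0406_03}. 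The paper also does not pass through a mean-oscillation/sharp-function estimate at all; it runs Caffarelli--Peral level sets directly on the decomposition $u=w+v$, so the formula you wrote for $\bigl(|Du-(Du)_{Q_r}|\bigr)_{Q_r}$ is not actually established anywhere. Relatedly, your description of $w$ is off: $w$ must solve the frozen-coefficient equation with the \emph{localized} right-hand side $D_i(\zeta\bar g_i)$, where $\bar g_i=g_i+(\bar a^{ij}-a^{ij})D_j u$ and $\zeta$ is a cutoff on $Q_{2R}$, with zero initial data at $t_0-R^{2/\alpha}$; if $w$ absorbed the full data as you wrote, $v$ would vanish.

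One further organizational point: the iterative bootstrapping of the exponent from $p_0=2$ upward that you describe as the ``main obstacle'' is carried out in the proof of Theorem~\ref{thm0412_1}, not of Theorem~\ref{main_thm}. Once Theorem~\ref{thm0412_1} is available for every $p\in(1,\infty)$, the proof of Theorem~\ref{main_thm} runs the level-set argument once for each target $p$ by choosing $p_0<p_0\mu<p<p_1$ (Proposition~\ref{prop1218_1}); the small-BMO perturbation enters through the H\"older step in Proposition~\ref{prop0515_1}, and the restriction to compactly supported $u$ is removed afterward by partition of unity (Lemma~\ref{lem1221_1}). Your handling of lower-order terms, Agmon's trick for large $\lambda$, the time-slicing to remove $\lambda$, duality for $p\in(1,2)$, and odd/even extension for $\bR^d_+$ all match the paper.
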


\begin{remark}
When $\Omega = \bR^d_+$ in the above theorem, one can also deal with the conormal derivative boundary value problem.
That is, the Dirichlet boundary condition can be replaced with the conormal derivative boundary condition
$$
a^{1j}D_j u + a^1 u = g_1 \quad \text{on} \quad (0,T) \times \partial\Omega.
$$
In this case, we require \eqref{eq0103_01} to be satisfied for any $\varphi \in C_0^\infty\left([S,T) \times \bar\Omega\right)$.
The proof is almost the same as for the Dirichlet boundary condition case using odd/even extensions.
\end{remark}

\section{Function spaces}
                        \label{Sec3}

Let $\Omega$ be a domain (open and connected, but not necessarily bounded) in $\bR^d$ and $S, T \in (-\infty,\infty)$ such that $S < T$.
In this paper, we consider the parabolic domain $(S,T) \times \Omega$, which is a subset of $\bR^{d+1}$.
If $S=0$, we sometimes denote $(0,T) \times \Omega$ by $\Omega_T$.
In particular, for $\Omega = \bR^d$, we write $\bR^d_T = (0,T) \times \bR^d$.

We first recall the definitions of $\tilde{\bH}_p^{\alpha,k}$, $\bH_p^{\alpha,k}$,
and $\bH_{p,0}^{\alpha,k}$ from \cite{arXiv:1806.02635}.
For $1 \le p \le \infty$, $\alpha \in (0,1)$, and $k \in \{1,2,\ldots\}$, we set
$$
\widetilde{\bH}_p^{\alpha,k}\left((S,T) \times \Omega\right) = \left\{ u \in L_p: \partial_t I_S^{1-\alpha} u, \, D^\beta_x u \in L_p, \, 0 \leq |\beta| \leq k
\right\}
$$
with the norm
\begin{equation*}
\|u\|_{\widetilde{\bH}_p^{\alpha,k}\left((S,T) \times \Omega\right)} = \|\partial_t I_S^{1-\alpha}u\|_{L_p\left((S,T) \times \Omega\right)} + \sum_{0 \leq |\beta| \leq k}\|D_x^\beta u\|_{L_p\left((S,T) \times \Omega\right)},
\end{equation*}
where by $\partial_t I_S^{1-\alpha} u$ we mean that there exists $g \in L_p\left((S,T) \times \Omega\right)$ such that
\begin{equation}
							\label{eq0122_01}
\int_S^T\int_\Omega g(t,x) \varphi(t,x) \, dx \, dt = - \int_S^T\int_\Omega I_S^{1-\alpha}u(t,x) \partial_t \varphi(t,x) \, dx \, dt
\end{equation}
for all $\varphi \in C_0^\infty\left((S,T) \times \Omega\right)$.
Recall that $\partial_t I_S^{1-\alpha} u$ can be denoted by $D_t^\alpha u$.
Next, $\bH_p^{\alpha,k}\left((S,T) \times \Omega\right)$ is defined by
\begin{align*}
&\bH_p^{\alpha,k}\left((S,T) \times \Omega\right)\\
&= \left\{ u \in \widetilde{\bH}_p^{\alpha,k}\left((S,T) \times \Omega\right): \text{\eqref{eq0122_01} is satisfied for all}\,\, \varphi \in C_0^\infty\left([S,T) \times \Omega\right)\right\}
\end{align*}
with the same norm as for $\widetilde{\bH}_p^{\alpha,k}\left((S,T) \times \Omega\right)$.
Note that test functions for the space $\bH_p^{\alpha,k}\left((S,T) \times \Omega\right)$ are not necessarily zero at $t = S$.
We then define
$\bH_{p,0}^{\alpha,k}((S,T)\times\Omega)$
to be functions in $\bH_p^{\alpha,k}((S,T)\times\Omega)$
each of which is
approximated by a sequence $\{u_n(t,x)\} \subset C^\infty\left([S,T]\times \Omega\right)$ such that $u_n$ vanishes for large $|x|$ and $u_n(S,x) = 0$.

By $w \in \bH_p^{-1}\left((S,T) \times \Omega \right)$ we mean that there exist $f, g_i \in L_p\left((S,T) \times \Omega\right)$, $i =1, \ldots, d$, such that
$$
w = D_i g_i + f
$$
in $(S,T) \times \Omega$ in the distribution sense
and
\begin{align*}
&\|w\|_{\bH_p^{-1}\left((S,T) \times \Omega \right)}\\
&= \inf \left\{ \sum_{i=1}^d\|g_i\|_{L_p\left((S,T) \times \Omega\right)} + \|f\|_{L_p\left((S,T) \times \Omega\right)}: w = D_i g_i + f\right\} < \infty.
\end{align*}
We also write
$$
w = \operatorname{div}g + f,
$$
where $g = (g_1,\ldots,g_d)$.

For $u \in L_p\left((S,T) \times \Omega \right)$, we say $D_t^\alpha u \in \bH_p^{-1}\left((S,T) \times \Omega \right)$
if there exist $f, g_i \in L_p\left((S,T) \times \Omega \right)$, $i=1,\ldots, d$, such that, for any $\varphi \in C_0^\infty\left((S,T) \times \Omega\right)$,
\begin{equation}
							\label{eq0720_01}
\int_S^T \int_\Omega I^{1-\alpha}_S u \, \partial_t \varphi \, dx \, dt = \int_S^T \int_\Omega g_i D_i \varphi \, dx \, dt - \int_S^T \int_\Omega f \varphi \, dx \, dt.
\end{equation}

Let $\tilde{\cH}_p^{\alpha, 1}\left((S,T) \times \Omega \right)$ be the collection of functions $u \in L_p\left((S,T) \times \Omega \right)$ such that $D_x u \in L_p\left((S,T) \times \Omega \right)$ and $D_t^\alpha u \in \bH_p^{-1}\left((S,T) \times \Omega \right)$.
For $u \in \tilde{\cH}_p^{\alpha, 1}\left((S,T) \times \Omega \right)$, if \eqref{eq0720_01} holds for any $\varphi \in C_0^\infty\left([S,T) \times \Omega\right)$ whenever \eqref{eq0720_01} holds for any $\varphi \in C_0^\infty\left((S,T) \times \Omega\right)$, we say that $u \in \cH_p^{\alpha,1}\left((S,T) \times \Omega\right)$ with the norm
$$
\|u\|_{\cH_p^{\alpha,1}\left((S,T)\times\Omega\right)} = \|u\|_{L_p\left((S,T) \times \Omega\right)} + \|D_x u\|_{L_p\left((S,T) \times \Omega\right)} + \|D_t^\alpha u\|_{\bH_p^{-1}\left((S,T) \times \Omega\right)}.
$$
We then define $\cH_{p,0}^{\alpha,1}\left((S,T) \times \Omega\right)$ to be the collection of $u \in \cH_p^{\alpha,1}\left((S,T) \times \Omega\right)$ satisfying the following. There exists a sequence of $\{u_n\} \subset C^\infty\left([S,T] \times \Omega\right)$ such that $u_n$ vanishes for large $|x|$,  $u_n(S,x) = 0$, and
\begin{equation}
							\label{eq0723_01}
\|u_n - u\|_{\cH_p^{\alpha,1}\left((S,T) \times \Omega\right)} \to 0
\end{equation}
as $n \to \infty$.

\begin{remark}
                \label{rem3.1}
Similarly as in Lemma 3.1 and Remark 3.4 of \cite{arXiv:1806.02635}, if $\alpha > 1-1/p$, we have
$$
\cH_p^{\alpha, 1}\left((S,T) \times \Omega \right) \subsetneq \tilde{\cH}_p^{\alpha, 1}\left((S,T) \times \Omega \right).
$$
Otherwise, these two spaces coincide.
\end{remark}

\begin{remark}
Without introducing $\tilde{\cH}_p^{\alpha, 1}\left((S,T) \times \Omega \right)$ and $\cH_p^{\alpha, 1}\left((S,T) \times \Omega \right)$, we may define $\cH_{p,0}^{\alpha,1}\left((S,T) \times \Omega\right)$ to be the collection of $u \in L_p\left((S,T) \times \Omega\right)$ satisfying that
\begin{equation}
							\label{eq0906_01}
Du \in L_p\left((S,T) \times \Omega\right), \quad D_t^\alpha u \in \bH_p^{-1}\left((S,T) \times \Omega\right),
\end{equation}
and there exists a sequence of $\{u_n\} \subset C^\infty\left([S,T] \times \Omega\right)$ such that $u_n$ vanishes for large $|x|$,  $u_n(S,x) = 0$, and \eqref{eq0723_01} holds as $n \to \infty$.
If $\cH_{p,0}^{\alpha,1}\left((S,T) \times \Omega\right)$ is defined in this way, we have
$$
\cH_{p,0}^{\alpha,1}\left((S,T) \times \Omega\right) \subset \cH_p^{\alpha,1}\left((S,T) \times \Omega\right).
$$
That is, if $u$ belongs to $\cH_{p,0}^{\alpha,1}\left((S,T) \times \Omega\right)$ which is defined as above, then \eqref{eq0720_01} holds for any $\varphi \in C_0^\infty\left([S,T) \times \Omega\right)$ whenever \eqref{eq0720_01} holds for any $\varphi \in C_0^\infty\left((S,T) \times \Omega\right)$.
To see this, let $u \in L_p\left((S,T) \times \Omega\right)$ satisfy \eqref{eq0906_01}, and let $\{u_n\}$ be a sequence such that $u_n \in C^\infty\left([S,T]\times \Omega\right)$ with $u_n(S,x)=0$, which vanishes for large $|x|$, and \eqref{eq0723_01} holds.
Since $D_t^\alpha u \in \bH_p^{-1}\left((S,T)\times\Omega\right)$, we write
$$
D_t^\alpha u = \operatorname{div} g + f, \quad f, \, g = (g_1, \ldots, g_d) \in L_p\left((S,T)\times\Omega\right).
$$
That is, \eqref{eq0720_01} holds for $\varphi \in C_0^\infty\left((S,T)\times\Omega\right)$.
Since
$$
\|D_t^\alpha u_n- D_t^\alpha u\|_{\bH_p^{-1}\left((S,T) \times \Omega\right)}
\to 0\quad \text{as}\,\,n\to \infty,
$$
there exist $g_n, f_n \in L_p\left((S,T)\times\Omega\right)$ such that
$$
\int_S^T \int_\Omega I^{1-\alpha}_S(u_n-u) \, \partial_t \varphi \, dx \, dt = \int_S^T \int_\Omega g_n \cdot \nabla \varphi \, dx \, dt - \int_S^T \int_\Omega f_n \varphi \, dx \, dt
$$
and
$$
\|g_n\|_{L_p\left((S,T)\times\Omega\right)} + \|f_n\|_{L_p\left((S,T)\times\Omega\right)} \to 0
\quad\text{as}\,\,n \to \infty,
$$
where $g_n = ({g_n}_1,\ldots,{g_n}_d)$.
Set
$$
G_n := g_n + g, \quad F_n := f_n + f.
$$
Then one can write
$$
D_t^\alpha u_n = \operatorname{div} G_n + F_n.
$$
That is,
\begin{equation}
							\label{eq0723_02}
\int_S^T \int_\Omega I^{1-\alpha}_S u_n \partial_t \varphi \, dx \, dt = \int_S^T \int_\Omega G_n \cdot \nabla \varphi \, dx \, dt - \int_S^T \int_\Omega F_n \varphi \, dx \, dt
\end{equation}
for $\varphi \in C_0^\infty\left((S,T)\times\Omega\right)$.
Let $\psi \in C_0^\infty\left([S,T) \times \Omega\right)$.
Since
$$
I_S^{1-\alpha}u_n(S,x) = 0,
$$
we have
\begin{multline}
							\label{eq0723_04}
\int_S^T \int_\Omega I_S^{1-\alpha} u_n \, \partial_t \psi \, dx \, dt = - \int_S^T \int_\Omega \partial_t (I_S^{1-\alpha} u_n) \psi \, dx \, dt
\\
= - \lim_{k \to \infty}\int_S^T \int_\Omega \partial_t \left(I_S^{1-\alpha} u_n\right) \psi \eta_k \, dx \, dt,
\end{multline}
where
$$
\eta \in C^\infty(\bR), \quad
\eta(t) =
\left\{
\begin{aligned}
0 \quad t \leq 0,
\\
1 \quad t \geq 1,
\end{aligned}
\right.
\quad
0 \leq \eta(t) \leq 1,
$$
and
$$
\eta_k(t) = \eta(kt).
$$
From the fact that $\psi \eta_k \in C_0^\infty\left((S,T) \times \Omega\right)$ and \eqref{eq0723_02}, it follows that
$$
- \int_S^T\int_\Omega \partial_t \left(I_S^{1-\alpha} u_n\right) \psi \eta_k \, dx \, dt = \int_S^T \int_\Omega I_S^{1-\alpha} u_n \partial_t \left(\psi \eta_k\right) \, dx \, dt
$$
$$
= \int_S^T \int_\Omega G_n \cdot \nabla(\psi \eta_k) \, dx \, dt - \int_S^T \int_\Omega F_n \psi \eta_k \, dx \, dt
$$
$$
\to \int_S^T \int_\Omega G_n \cdot \nabla \psi \, dx \, dt - \int_S^T \int_\Omega F_n \psi \, dx \, dt
\quad\text{as}\,\,k\to\infty.
$$
This combined with \eqref{eq0723_04} shows that
\begin{equation}
							\label{eq0723_03}
\int_S^T \int_\Omega I_S^{1-\alpha} u_n \, \partial_t \psi \, dx \, dt = \int_S^T \int_\Omega G_n \cdot \nabla\psi \, dx \, dt - \int_S^T \int_\Omega F_n \psi \, dx \, dt
\end{equation}
for $\psi \in C_0^\infty\left([S,T) \times \Omega\right)$.
By the properties of $I_S^{1-\alpha}$ and the fact that
$u_n \to u$ in $L_p\left((S,T) \times \bR^d\right)$, we have
$$
I_S^{1-\alpha}u_n \to I_S^{1-\alpha}u
$$
in $L_p\left((S,T) \times \Omega\right)$.
By letting $n \to \infty$ in \eqref{eq0723_03} and using the fact that $G_n \to g$ and $F_n \to f$ in $L_p\left((S,T) \times \Omega\right)$, we see that
$$
\int_S^T \int_\Omega I_S^{1-\alpha} u  \partial_t \psi \, dx \, dt = \int_S^T \int_\Omega g \cdot \nabla \psi \, dx \, dt - \int_S^T \int_\Omega f \psi \, dx \, dt.
$$
Therefore, \eqref{eq0720_01} holds for $\psi \in C_0^\infty\left([S,T) \times \Omega\right)$.
\end{remark}

\begin{lemma}
							\label{lem0206_1}
Let $p \in [1,\infty)$, $\alpha \in (0,1)$, $-\infty < S < t_0 < T < \infty$, and $u \in \cH_{p,0}^{\alpha,1}\left( (t_0,T) \times \Omega \right)$.
If $u$ is extended to be zero for $t \leq t_0$, denoted by $\bar{u}$, then $\bar{u} \in \cH_{p,0}^{\alpha,1}\left((S,T) \times \Omega\right)$.
\end{lemma}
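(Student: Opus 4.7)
The plan is to check the three ingredients in the definition of $\cH_{p,0}^{\alpha,1}\left((S,T)\times\Omega\right)$: that $\bar u, D_x\bar u\in L_p$, that $D_t^\alpha \bar u\in \bH_p^{-1}\left((S,T)\times\Omega\right)$ with the correct test-function property, and that $\bar u$ admits an approximating sequence of smooth functions vanishing at $t=S$ and for large $|x|$.

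The $L_p$ parts are immediate: the zero extension preserves $L_p$ norms, and $D_x\bar u$ coincides a.e.\ with the zero extension of $D_x u$ and is therefore in $L_p\left((S,T)\times\Omega\right)$. For the fractional time derivative, the key observation is that, since $\bar u(t,x)=0$ for $t\le t_0$, one has pointwise
$$
I_S^{1-\alpha}\bar u(t,x)=I_{t_0}^{1-\alpha} u(t,x)\quad\text{for }t\in(t_0,T),\qquad I_S^{1-\alpha}\bar u(t,x)=0\quad\text{for }t\in(S,t_0).
$$
Because $u\in\cH_{p,0}^{\alpha,1}\left((t_0,T)\times\Omega\right)$, we can write $D_t^\alpha u=\operatorname{div} g+f$ with $g,f\in L_p\left((t_0,T)\times\Omega\right)$, and the corresponding identity \eqref{eq0720_01} is valid for all $\varphi\in C_0^\infty\left([t_0,T)\times\Omega\right)$. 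I claim that, setting $\bar g,\bar f$ to be the zero extensions, $D_t^\alpha\bar u=\operatorname{div}\bar g+\bar f$ with \eqref{eq0720_01} valid on $[S,T)$. Indeed, for any $\varphi\in C_0^\infty\left([S,T)\times\Omega\right)$, the restriction $\varphi|_{[t_0,T)}$ is again a test function in $C_0^\infty\left([t_0,T)\times\Omega\right)$, and using the pointwise identity above,
$$
\int_S^T\!\!\int_\Omega I_S^{1-\alpha}\bar u\,\partial_t\varphi\,dx\,dt=\int_{t_0}^T\!\!\int_\Omega I_{t_0}^{1-\alpha} u\,\partial_t\varphi\,dx\,dt=\int_{t_0}^T\!\!\int_\Omega\bigl(g\cdot\nabla\varphi-f\varphi\bigr)dx\,dt,
$$
which is exactly $\int_S^T\!\int_\Omega\bigl(\bar g\cdot\nabla\varphi-\bar f\varphi\bigr)dx\,dt$. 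In particular, $D_t^\alpha\bar u\in\bH_p^{-1}\left((S,T)\times\Omega\right)$ and $\bar u\in\cH_p^{\alpha,1}\left((S,T)\times\Omega\right)$.

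To exhibit an approximating sequence, I would start from a sequence $\{v_n\}\subset C^\infty\left([t_0,T]\times\Omega\right)$ with $v_n(t_0,x)=0$, compact $x$-support, and $\|v_n-u\|_{\cH_p^{\alpha,1}\left((t_0,T)\times\Omega\right)}\to 0$. Let $\eta_k\in C^\infty(\bR)$ satisfy $\eta_k(t)=0$ for $t\le t_0+1/k$ and $\eta_k(t)=1$ for $t\ge t_0+2/k$, and set $w_{n,k}(t,x)=\eta_k(t)v_n(t,x)$ on $[t_0,T]\times\Omega$, extended by zero to $[S,t_0]\times\Omega$. Then $w_{n,k}\in C^\infty\left([S,T]\times\Omega\right)$ with $w_{n,k}(S,x)=0$ and compact $x$-support. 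For fixed $n$, the difference $(1-\eta_k)v_n$ is supported in $[t_0,t_0+2/k]$ and, since $v_n$ is smooth and vanishes at $t_0$, one obtains $\|w_{n,k}-v_n\|_{\cH_p^{\alpha,1}\left((t_0,T)\times\Omega\right)}\to 0$ as $k\to\infty$: the $L_p$ and $D_xL_p$ pieces vanish by dominated convergence, while for the $\bH_p^{-1}$ piece one simply takes $g=0$ in the decomposition and controls $\|D_t^\alpha w_{n,k}-D_t^\alpha v_n\|_{L_p}$ directly using smoothness of $v_n$ and $\eta_k$. Combined with Step 1 applied to the pair $w_{n,k}-u$ via their zero extensions, a diagonal sequence $u_n:=\bar w_{n,k(n)}$ converges to $\bar u$ in $\cH_p^{\alpha,1}\left((S,T)\times\Omega\right)$, which places $\bar u$ in $\cH_{p,0}^{\alpha,1}\left((S,T)\times\Omega\right)$.

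The main obstacle is the $\bH_p^{-1}$ convergence of $D_t^\alpha(w_{n,k}-\bar u)$: since $D_t^\alpha$ is nonlocal in time with a weakly singular kernel, one cannot extend a smooth function by zero across $t_0$ without breaking smoothness, which is precisely why the cutoff $\eta_k$ must be introduced and a diagonalization in $(n,k)$ is needed. All other estimates are either pointwise identities (Step 1) or follow from the choice of $\{v_n\}$ (Step 3), so the proof reduces to this single technical point.
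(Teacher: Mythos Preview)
Your argument is correct, and your first step (showing $\bar u\in\cH_p^{\alpha,1}\left((S,T)\times\Omega\right)$ with $D_t^\alpha\bar u=\operatorname{div}\bar g+\bar f$) matches the paper's exactly. The difference lies in how the approximating sequence is produced. The paper zero-extends the smooth $u_n$ to $\bar u_n$ (which is no longer smooth across $t=t_0$), shows $\bar u_n\to\bar u$ in $\cH_p^{\alpha,1}\left((S,T)\times\Omega\right)$, and then applies a one-sided mollification in time, $\bar u_n^{(\varepsilon)}(t,x)=\int\eta_\varepsilon(t-s)\bar u_n(s,x)\,ds$; the technical heart is the commutation identity $I_S^{1-\alpha}\bigl(\bar u_n^{(\varepsilon)}\bigr)=\bigl(I_S^{1-\alpha}\bar u_n\bigr)^{(\varepsilon)}$, which the paper verifies by a direct Fubini computation. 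Your route---cutting off $v_n$ near $t_0$ so that its zero extension is genuinely smooth---avoids this identity altogether. The step you flag as the ``main obstacle'' is in fact easier than you suggest: since $v_n$ is smooth with $v_n(t_0,\cdot)=0$, the Caputo formula gives
\[
\partial_t^\alpha\bigl((\eta_k-1)v_n\bigr)=I_{t_0}^{1-\alpha}\bigl(\partial_s[(\eta_k-1)v_n]\bigr),
\]
and $\partial_s[(\eta_k-1)v_n]=\eta_k' v_n+(\eta_k-1)\partial_s v_n$ is uniformly bounded (the bound on $\eta_k' v_n$ uses $|v_n(s,x)|\le C(s-t_0)\le C/k$ on the support of $\eta_k'$) and supported in $[t_0,t_0+2/k]$, so its $L_p$ norm is $O(k^{-1/p})$. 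The $L_p$-boundedness of $I^{1-\alpha}$ then gives $\|\partial_t^\alpha((\eta_k-1)v_n)\|_{L_p}\to 0$ directly. Your cutoff approach is arguably more elementary; the paper's mollification approach yields a reusable commutation lemma and does not need the pointwise cancellation $v_n(t_0,\cdot)=0$ at the level of the mollified functions.
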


\begin{proof}
Without loss of generality, we assume $t_0 = 0$ so that
$$
- \infty < S < 0 < T < \infty.
$$
For $u \in \cH_{p,0}^{\alpha,1}(\Omega_T)$,
let $\bar{u}$ be the zero extension of $u$ for $t \leq 0$.
We first check that
$\bar{u} \in \cH_p^{\alpha,1}\left((S,T)\times\Omega\right)$.
It is readily seen that
$\bar{u} \in L_p\left((S,T)\times\Omega\right)$ and
$$
D_x \bar{u} = \left\{
\begin{aligned}
D_x u, \quad 0 \leq t \leq T,
\\
0, \quad S \leq t < 0,
\end{aligned}
\right.
\quad
D_x \bar{u} \in L_p\left((S,T) \times \Omega \right).
$$
Since $D_t^\alpha u \in \bH_p^{-1}(\Omega_T)$, there exists $f, \, g = (g_1,\ldots,g_d) \in L_p(\Omega_T)$ such that, for any $\psi \in C_0^\infty\left([0,T) \times \Omega\right)$,
\begin{equation}
							\label{eq0723_05}
\int_{\Omega_T} I_0^{1-\alpha} u \, \partial_t \psi \, dx \, dt = \int_{\Omega_T} g \cdot \nabla \psi \, dx \, dt - \int_{\Omega_T} f \psi \, dx \, dt,
\end{equation}
where the equality indeed holds for $\psi \in C_0^\infty\left([0,T) \times \Omega\right)$ because $u \in \cH_{p,0}^{\alpha,1}(\Omega_T) \subset \cH_p^{\alpha,1}(\Omega_T)$.
See the definition of $\cH_p^{\alpha,1}(\Omega_T)$ before Remark \ref{rem3.1}.
Set $\bar{g}$ and $\bar{f}$ to be the zero extensions of $g$ and $f$ for $t\leq 0$.
Then, since
$$
I_S^{1-\alpha} \bar{u} = \left\{
\begin{aligned}
I_0^{1-\alpha} u, &\quad t \in [0,T],
\\
0, &\quad t \in [S,0),
\end{aligned}
\right.
$$
it follows from \eqref{eq0723_05} that
$$
\int_S^T\int_\Omega I_S^{1-\alpha} \bar{u} \, \partial_t \varphi \, dx \, dt = \int_{\Omega_T} I_0^{1-\alpha} u \, \partial_t \varphi \, dx \, dt
$$
$$
= \int_{\Omega_T} g \cdot \nabla \varphi \, dx \, dt - \int_{\Omega_T} f \varphi \, dx \, dt = \int_S^T\int_\Omega \bar{g} \cdot \nabla \varphi \, dx \, dt - \int_S^T\int_\Omega \bar{f} \varphi \, dx \, dt
$$
for any $\varphi \in C_0^\infty\left([S,T) \times\Omega\right)$.
This shows that
\begin{equation}
							\label{eq0724_01}
D_t^\alpha \bar{u} = \partial_t I_S^{1-\alpha} \bar{u} = \operatorname{div} \bar{g} + \bar{f}
\end{equation}
in $(S,T) \times \Omega$  and $\bar{u} \in \cH_p^{\alpha,1}\left((S,T) \times \Omega\right)$.

Let $\{u_n\}$ be an approximating sequence of $u$ such that $u_n \in C^\infty\left([0,T] \times \Omega\right)$, $u_n$ vanishes for large $|x|$, $u_n(0,x) = 0$, and
$$
\|u_n - u\|_{\cH_p^{\alpha,1}\left((0,T) \times \Omega\right)} \to 0
\quad\text{as}\,\,n \to \infty.
$$
Extend $u_n$ to be zero for $t \leq 0$, denoted by $\bar{u}_n$.
As is shown above, we have $\bar{u}_n \in \cH_p^{\alpha, 1}\left((S,T)\times\Omega\right)$.
Clearly,
\begin{equation}
							\label{eq0723_06}
\|\bar{u}_n - \bar{u}\|_{L_p\left((S,T)\times\Omega\right)} + \|D_x\bar{u}_n - D_x\bar{u}\|_{L_p\left((S,T)\times\Omega\right)} \to 0
\quad\text{as}\,\,n \to \infty.
\end{equation}
We now check
\begin{equation}
							\label{eq0907_01}
\|D_t^\alpha \bar{u}_n - D_t^\alpha \bar{u}\|_{\bH_p^{-1}\left((S,T)\times\Omega\right)} \to 0
\quad\text{as}\,\,n \to \infty.
\end{equation}
Since
$$
\|u_n - u\|_{\cH_{p,0}^{\alpha,1}(\Omega_T)} \to 0 \quad\text{as}\,\,n \to \infty,
$$
there exist $g_n, f_n \in L_p(\Omega_T)$ such that
$$
D_t^\alpha (u_n-u) = \partial_t I_0^{1-\alpha}(u_n-u) = \operatorname{div} g_n + f_n
$$
in $\Omega_T$
and
$$
\|g_n\|_{L_p(\Omega_T)} + \|f_n\|_{L_p(\Omega_T)} \to 0
\quad\text{as}\,\,n \to \infty.
$$
Then by the reasoning above, we have
\begin{equation}
							\label{eq0724_02}
D_t^\alpha(\bar{u}_n - \bar{u}) = \partial_t I_S^{1-\alpha}(\bar{u}_n - \bar{u}) = \operatorname{div}\bar{g}_n + \bar{f}_n
\end{equation}
in $(S,T) \times \Omega$, where $\bar{g}_n, \bar{f}_n$ are the zero extensions of $g_n, f_n$ for $t \leq 0$.
Clearly,
$$
\|\bar{g}_n\|_{L_p\left((S,T)\times\Omega\right)} + \|\bar{f}_n\|_{L_p\left((S,T)\times\Omega\right)}
 \to 0
 \quad\text{as}\,\,n \to \infty.
$$
This shows \eqref{eq0907_01}, which along with \eqref{eq0723_06} proves that
\begin{equation}
							\label{eq0724_05}
\| \bar{u}_n - \bar{u} \|_{\cH_p^{\alpha,1}\left((S,T)\times\Omega\right)} \to 0
\quad\text{as}\,\,n \to \infty.
\end{equation}
Set
$$
G_n = \bar{g}_n + \bar{g}, \quad F_n = \bar{f}_n + \bar{f}, \quad G_n, F_n \in L_p\left((S,T) \times \Omega\right).
$$
From \eqref{eq0724_01} and \eqref{eq0724_02} we have
\begin{equation}
							\label{eq0724_04}
D_t^\alpha \bar{u}_n = \partial_t I_S^{1-\alpha} \bar{u}_n = \operatorname{div} G_n + F_n
\end{equation}
in $(S,T) \times \Omega$.

We now mollify $\bar{u}_n$ to obtain an approximating sequence in $C^\infty\left([S,T] \times \Omega\right)$ for $\bar{u}$ so that  we finally check $\bar{u} \in \cH_{p,0}^{1, \alpha}\left((S,T)\times \Omega\right)$.
Let $\eta(t)$ be an infinitely differentiable function defined in $\bR$ satisfying $\eta \ge 0$, $\eta(t) = 0$ outside $(0,1)$, and	
$$
\int_\bR \eta(t) \, dt = 1.
$$
Set
$$
\bar{u}_n^{(\varepsilon)}(t,x) = \int_{-\infty}^T \eta_{\varepsilon}(t-s) \bar{u}_n(s,x) \, ds, \quad \eta_\varepsilon(t) = \frac{1}{\varepsilon^{2/\alpha}} \eta(t/\varepsilon^{2/\alpha}).
$$
Then it follows easily that $\bar{u}_n^{(\varepsilon)}(t,x) \in C^\infty\left([S,T] \times \Omega\right)$, $\bar{u}_n^{(\varepsilon)}(S,x) = 0$, $\bar{u}_n^{(\varepsilon)}(t,x)$ vanishes for large $|x|$, and
\begin{equation}
							\label{eq0907_03}
D_x \bar{u}_n^{(\varepsilon)}(t,x) = \int_{-\infty}^T \eta_{\varepsilon}(t-s) D_x \bar{u}_n(s,x)\, ds
\end{equation}
for $(t,x) \in (S,T) \times \Omega$.
Moreover,
\begin{equation}
							\label{eq0724_03}
D_t^\alpha \bar{u}_n^{(\varepsilon)}(t,x) = \operatorname{div} G_n^{(\varepsilon)} + F_n^{(\varepsilon)},
\end{equation}
where
$$
G_n^{(\varepsilon)}(t,x) = \int_{-\infty}^T \eta_\varepsilon(t-s) G_n(s,x) \, ds = \int_S^T \eta_\varepsilon(t-s) G_n(s,x) \, ds, $$
$$
F_n^{(\varepsilon)}(t,x) = \int_{-\infty}^T \eta_\varepsilon(t-s) F_n(s,x) \, ds = \int_S^T \eta_\varepsilon(t-s) F_n(s,x) \, ds.
$$
To verify \eqref{eq0724_03}, we first check that
$$
I_S^{1-\alpha} \bar{u}_n^{(\varepsilon)} = \left( I_S^{1-\alpha}\bar{u}_n \right)^{(\varepsilon)}
$$
in $(S,T) \times \Omega$.
Indeed, using the fact that $\eta(r) = 0$ if $r \leq 0$, for $(t,x) \in (S,T) \times \Omega$, we have
$$
\Gamma(1-\alpha) I_S^{1-\alpha} \bar{u}_n^{(\varepsilon)}(t,x) = \int_S^t (t-s)^{-\alpha} \bar{u}_n^{(\varepsilon)}(s,x) \, ds
$$
$$
= \int_S^t (t-s)^{-\alpha} \int_{-\infty}^T \eta_\varepsilon(s-r) \bar{u}_n(r,x) \, dr \, ds
$$
$$
= \int_S^t (t-s)^{-\alpha} \int_S^s \eta_\varepsilon(s-r) \bar{u}_n(r,x) \, dr \, ds
$$
$$
= \int_S^t \int_r^t (t-s)^{-\alpha} \eta_\varepsilon(s-r) \bar{u}_n(r,x) \, ds \, dr
$$
$$
= \int_S^t \int_r^t (s-r)^{-\alpha} \eta_\varepsilon(t-s) \bar{u}_n(r,x) \, ds \, dr
$$
$$
= \int_S^t \int_S^s (s-r)^{-\alpha} \eta_\varepsilon(t-s) \bar{u}_n(r,x) \, dr \, ds
$$
$$
= \int_S^t \eta_\varepsilon(t-s) \int_S^s (s-r)^{-\alpha} \bar{u}_n(r,x)\, dr \, ds
$$
$$
= \Gamma(1-\alpha) \int_S^t \eta_\varepsilon(t-s) I_S^{1-\alpha} \bar{u}_n(s,x) \, ds = \Gamma(1-\alpha) \left(I_S^{1-\alpha}\bar{u}_n\right)^{(\varepsilon)}(t,x).
$$
Then, for $\varphi \in C_0^\infty\left((S,T) \times \Omega\right)$,
\begin{multline}
							\label{eq0907_02}
\int_S^T \int_\Omega I_S^{1-\alpha} \bar{u}_n^{(\varepsilon)} \, \partial_t \varphi \, dx \, dt = \int_S^T \int_\Omega \left( I_S^{1-\alpha} \bar{u}_n\right)^{(\varepsilon)} \partial_t \varphi \, dx \, dt
\\
= \int_S^T \int_\Omega \int_{-\infty}^T \eta_\varepsilon(t-s) \left(I_S^{1-\alpha}\bar{u}_n\right)(s,x) \, ds \, \partial_t \varphi(t,x) \, dx \, dt
\\
= \int_S^T \int_\Omega \int_S^T \eta_\varepsilon(t-s) \left(I_S^{1-\alpha}\bar{u}_n\right)(s,x) \, ds \, \partial_t \varphi(t,x) \, dx \, dt
\\
= \int_S^T \int_\Omega \left(I_S^{1-\alpha}\bar{u}_n\right)(s,x) \int_S^T \eta_\varepsilon(t-s) \partial_t \varphi(t,x) \, dt \, dx \, ds
\\
= \int_S^T \int_\Omega \left(I_S^{1-\alpha}\bar{u}_n\right)(s,x) \, \partial_s \Phi(s,x) \, dx \, ds,
\end{multline}
where
$$
\Phi(s,x): = \int_S^T \eta_\varepsilon(t-s) \varphi(t,x) \, dt \in C_0^\infty\left([S,T) \times \Omega \right).
$$
In particular, $\Phi(T,x) = 0$ because $\eta_\varepsilon(t-T) = 0$ for $t \in [S,T]$.
Using \eqref{eq0907_02} and \eqref{eq0724_04}, we see that
$$
\int_S^T \int_\Omega I_S^{1-\alpha} \bar{u}_n^{(\varepsilon)} \, \partial_t \varphi \, dx \, dt = \int_S^T \int_\Omega G_n \cdot \nabla \Phi \, d x \, ds - \int_S^T \int_\Omega F_n \Phi \, dx \, ds
$$
$$
= \int_S^T \int_\Omega G_n^{(\varepsilon)} \cdot \nabla \varphi \, dx \, dt  - \int_S^T \int_\Omega F_n^{(\varepsilon)} \varphi \, dx \, dt.
$$
This justifies \eqref{eq0724_03}.
By \eqref{eq0907_03}, \eqref{eq0724_03}, the properties of mollifications, and \eqref{eq0724_05}, we have
$$
\|\bar{u}_n^{(\varepsilon)} - \bar{u}\|_{\cH_p^{\alpha,1}\left((S,T)\times\Omega\right)} \to 0
$$
as $n \to \infty$ and $\varepsilon \to 0$.
Therefore, $\bar{u} \in \cH_{p,0}^{\alpha,1}\left((S,T) \times \Omega\right)$.
\end{proof}

\begin{lemma}
							\label{lem0207_1}
Let $p \in [1,\infty)$, $\alpha \in (0,1)$, $k \in \{1,2,\ldots\}$, $-\infty < S \leq t_0 < T < \infty$, and $v \in \cH_{p,0}^{\alpha,1}\left((S,T) \times \Omega \right)$.
Then, for any infinitely differentiable function $\eta$ defined on $\bR$ such that $\eta(t)=0$ for $t \leq t_0$ and
$$
|\eta'(t)| \le M, \quad t \in \bR,
$$
the function $\eta v$ belongs to $\cH_{p,0}^{\alpha,1}\left( (t_0,T) \times \Omega \right)$ and in the sense of distribution
\begin{equation}
							\label{eq0727_01}
\partial_t^\alpha (\eta v)(t,x) = \partial_t I_{t_0}^{1-\alpha} (\eta
v)(t,x) = \eta(t) \partial_t I_S^{1-\alpha} v (t,x) - F(t,x)
\end{equation}
in $(t_0,T) \times \Omega$,
where
\begin{equation}
							\label{eq0207_04}
F(t,x): = \frac{\alpha}{\Gamma(1-\alpha)} \int_S^t (t-s)^{-\alpha-1} \left(\eta(s) - \eta(t)\right) v(s,x) \, ds
\end{equation}
satisfies
\begin{equation}
							\label{eq0207_01}
\|F\|_{L_p\left((t_0,T) \times \Omega\right)} \le N(\alpha, p, M, T, S) \|v\|_{L_p\left( (S,T) \times \Omega\right)}.
\end{equation}
\end{lemma}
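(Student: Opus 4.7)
My plan is to prove the bound \eqref{eq0207_01} by a convolution estimate, to prove the identity \eqref{eq0727_01} by a direct computation on smooth functions, and then to pass to the limit using the approximating sequence from the definition of $\cH_{p,0}^{\alpha,1}$.

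For \eqref{eq0207_01}, the mean value theorem gives $|\eta(s)-\eta(t)| \le M|t-s|$, so pointwise
\begin{equation*}
|F(t,x)| \le \frac{\alpha M}{\Gamma(1-\alpha)} \int_S^t (t-s)^{-\alpha} |v(s,x)| \, ds.
\end{equation*}
For each fixed $x$ this is the convolution in $t$ of $|v(\cdot,x)|$ with the kernel $u \mapsto u^{-\alpha} \mathbf{1}_{(0,T-S)}(u)$, whose $L_1$ norm equals $(T-S)^{1-\alpha}/(1-\alpha)$. Young's inequality in $t$ followed by Fubini in $x$ yields \eqref{eq0207_01}.

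Next I verify \eqref{eq0727_01} for a smooth $w \in C^\infty([S,T] \times \Omega)$ vanishing at $t=S$. On $(t_0,T) \times \Omega$ we have $I_{t_0}^{1-\alpha}(\eta w) = I_S^{1-\alpha}(\eta w)$ because $\eta \equiv 0$ on $[S,t_0]$. Splitting $\eta(s) = \eta(t) + (\eta(s)-\eta(t))$ inside the definition of $I_S^{1-\alpha}(\eta w)(t,x)$ yields
\begin{equation*}
I_S^{1-\alpha}(\eta w)(t,x) = \eta(t)\, I_S^{1-\alpha} w(t,x) + G(t,x),
\end{equation*}
where $G(t,x) = \frac{1}{\Gamma(1-\alpha)} \int_S^t (t-s)^{-\alpha} (\eta(s)-\eta(t)) w(s,x)\, ds$. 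The product rule on the first summand produces $\eta'(t) I_S^{1-\alpha} w + \eta(t) \partial_t I_S^{1-\alpha} w$. For $G$, the bound $|\eta(s)-\eta(t)| \le M(t-s)$ lowers the order of the singularity by one, so differentiation under the integral is legitimate and gives $\partial_t G(t,x) = -F(t,x) - \eta'(t) I_S^{1-\alpha} w(t,x)$. Adding the two contributions, the $\eta' I_S^{1-\alpha}w$ terms cancel and \eqref{eq0727_01} holds for $w$.

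To pass to the limit, choose an approximating sequence $\{v_n\} \subset C^\infty([S,T]\times\Omega)$ for $v$ with $v_n(S,\cdot)=0$, compact $x$-support, and $\|v_n - v\|_{\cH_p^{\alpha,1}((S,T)\times\Omega)} \to 0$. Each $\eta v_n$ is smooth on $[t_0,T]\times\Omega$, has compact $x$-support, and vanishes at $t_0$ because $\eta(t_0)=0$, so it is an admissible approximator for membership in $\cH_{p,0}^{\alpha,1}((t_0,T)\times\Omega)$. Applying the previous step to each $v_n$ gives $\partial_t^\alpha(\eta v_n) = \eta\, \partial_t^\alpha v_n - F_n$, where $F_n$ is \eqref{eq0207_04} with $v_n$ in place of $v$. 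Writing $\partial_t^\alpha v_n - \partial_t^\alpha v = \operatorname{div} g_n + f_n$ with $\|g_n\|_{L_p} + \|f_n\|_{L_p} \to 0$ and using that $\eta$ depends only on $t$ so $\eta(\operatorname{div} g_n + f_n) = \operatorname{div}(\eta g_n) + \eta f_n$, we obtain $\eta \partial_t^\alpha v_n \to \eta \partial_t^\alpha v$ in $\bH_p^{-1}((t_0,T)\times\Omega)$. Step 1 applied to $v_n - v$ yields $F_n \to F$ in $L_p$. Combined with the trivial $L_p$ convergence of $\eta v_n$ and $D_x(\eta v_n) = \eta D_x v_n$, this proves $\eta v_n \to \eta v$ in $\cH_p^{\alpha,1}((t_0,T)\times\Omega)$, which simultaneously establishes $\eta v \in \cH_{p,0}^{\alpha,1}((t_0,T)\times\Omega)$ and validates \eqref{eq0727_01} in the distributional sense.

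The main obstacle is the differentiation under the integral in the calculation of $\partial_t G$. The Lipschitz bound on $\eta$ is exactly what removes the non-integrable endpoint contribution that would otherwise arise from the $(t-s)^{-\alpha-1}$ singularity; this cancellation is the reason the extra term $F$ takes the clean form \eqref{eq0207_04} rather than containing a spurious boundary piece.
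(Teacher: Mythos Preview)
Your proof is correct and follows essentially the same approach as the paper: both use the Lipschitz bound $|\eta(s)-\eta(t)|\le M|t-s|$ together with a convolution estimate for \eqref{eq0207_01}, the same splitting $I_S^{1-\alpha}(\eta w)=\eta(t)I_S^{1-\alpha}w+G$ and the computation $\partial_t G=-F-\eta' I_S^{1-\alpha}w$ on smooth approximants, and then pass to the limit via the defining sequence of $\cH_{p,0}^{\alpha,1}$ by writing $\partial_t^\alpha(v_n-v)=\operatorname{div}g_n+f_n$ and multiplying by $\eta$. The only cosmetic difference is that the paper quotes a lemma from \cite{arXiv:1806.02635} for the $L_p$ bound where you invoke Young's inequality directly.
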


\begin{remark}
							\label{rem0910_1}
In Lemma \ref{lem0207_1} above if $\eta v$ belongs to $\bH_{p,0}^{\alpha,2}\left((t_0,T) \times \Omega\right)$, by \eqref{eq0727_01}
$$
\eta(t) \partial_t I_S^{1-\alpha} v \in L_p\left((t_0,T) \times \Omega\right).
$$
In this case \eqref{eq0727_01} holds a.e. in $(t_0,T) \times \Omega$.
\end{remark}

\begin{proof}[Proof of Lemma \ref{lem0207_1}]
As in the proof of Lemma \ref{lem0206_1}, we assume that $t_0 = 0$.
First we check \eqref{eq0207_01}.
Note that since $|\eta'(t)| \leq M$, we have
\begin{align*}
&\left| \int_S^t (t-s)^{-\alpha-1} \left(\eta(t) - \eta(s) \right) v(s,x) \, ds \right|\\
&\leq M \int_S^t (t-s)^{-\alpha}|v(s,x)| \, ds = M \Gamma(1-\alpha) I^{1-\alpha}_S |v(t,x)|
\end{align*}
for $(t,x) \in \Omega_T$.
Hence, the inequality \eqref{eq0207_01} follows from  Lemma A.2 in \cite{arXiv:1806.02635} with $1-\alpha$ in place of $\alpha$ (also see Remark A.3 in \cite{arXiv:1806.02635}).

By $\eta(t) \partial_t I_S^{1-\alpha} v (t,x)$ we mean that, if
$$
D_t^\alpha v = \partial_t I_S^{1-\alpha} v = \operatorname{div} g + f
$$
in $(S,T) \times \Omega$, where $g, f \in L_p\left((S,T) \times \Omega \right)$, then
\begin{equation}
							\label{eq0907_05}
\eta(t) \partial_t I_S^{1-\alpha} v = \operatorname{div} (\eta g) + \eta f
\end{equation}
in $(S,T) \times \Omega$ so that, for any $\psi \in C_0^\infty\left((S,T) \times \Omega\right)$, we have
$$
\int_S^T \int_\Omega I_S^{1-\alpha} v \, \partial_t (\eta(t) \psi) \, dx \, dt = \int_S^T \int_\Omega (\eta g) \cdot \nabla \psi \, dx \, dt - \int_S^T \int_\Omega \eta f \psi \, dx \, dt.
$$

We now prove that $\eta v \in \cH_{p,0}^{\alpha,1}(\Omega_T)$ and \eqref{eq0727_01}.
Since $v \in \cH_{p,0}^{\alpha,1}\left((S,T) \times \Omega\right)$, there is a sequence $\{v_n\}$ such that $v_n \in C^\infty\left([S,T] \times \Omega\right)$, $v_n(S,x) = 0$, $v_n(t,x)$ vanishes for large $|x|$, and
$$
\|v_n - v\|_{\cH_{p,0}^{\alpha,1}\left((S,T)\times\Omega\right)} \to 0
\quad\text{as}\,\,n \to \infty
$$
Also let $g, f, g_n, f_n \in L_p\left((S,T)\times\Omega\right)$ be functions such that
$$
\partial_t^\alpha v = D_t^\alpha v = \partial_t I_S^{1-\alpha} v = \operatorname{div} g + f,
$$
$$
\partial_t^\alpha v_n = D_t^\alpha v_n = \partial_t I_S^{1-\alpha} v_n = \operatorname{div} g_n + f_n,
$$
$$
\|g_n - g\|_{L_p\left((S,T)\times\Omega\right)} + \|f_n - f\|_{L_p\left((S,T)\times\Omega\right)} \to 0
\quad\text{as}\,\,n \to \infty
$$
Observe that, for $(t,x) \in \Omega_T$,
$$
I_0^{1-\alpha} (\eta v_n) = \eta(t) I_S^{1-\alpha} v_n + I_0^{1-\alpha} (\eta v_n) - \eta(t) I_S^{1-\alpha} v_n
$$
$$
= \eta(t) I_S^{1-\alpha} v_n + \bF_n(t,x),
$$
where
$$
\bF_n(t,x) := I_0^{1-\alpha} (\eta v_n) - \eta(t) I_S^{1-\alpha} v_n.
$$
Using the fact that $\eta(t) = 0$ for $t \leq 0$, we note that
$$
\bF_n(t,x) = \frac{1}{\Gamma(1-\alpha)}\int_S^t (t-s)^{-\alpha} \left(\eta(s) - \eta(t)\right) v_n(s,x) \, ds,
$$
from which we see that
\begin{multline}
							\label{eq0907_04}
\partial_t \bF_n(t,x) = -\frac{\alpha}{\Gamma(1-\alpha)} \int_S^t (t-s)^{-\alpha-1} \left(\eta(s) - \eta(t)\right) v_n(s,x) \, dx
\\
- \frac{\eta'(t)}{\Gamma(1-\alpha)} \int_S^t (t-s)^{-\alpha} v_n(s,x)\, ds = - F_n(t,x) - \eta'(t) I_S^{1-\alpha} v_n,
\end{multline}
where $F_n$ is defined as in \eqref{eq0207_04} with $v_n$ in place of $v$.
Then, for $\varphi \in C_0^\infty\left([0,T) \times \Omega\right)$,
$$
\int_{\Omega_T} I_0^{1-\alpha} (\eta v_n) \, \partial_t \varphi \, dx \, dt = \int_{\Omega_T} \left(\eta(t) I_S^{1-\alpha} v_n + \bF_n \right) \partial_t \varphi \, dx \, dt
$$
$$
= \int_S^T I_S^{1-\alpha} v_n \, \partial_t (\eta \varphi) \, dx \, dt + \int_{\Omega_T} \left( \bF_n \partial_t \varphi - I_S^{1-\alpha} v_n \, \eta'(t) \varphi \right) \, dx \, dt =: J_1 + J_2,
$$
where
$$
J_1 = \int_S^T \left(g_n \cdot \nabla (\eta \varphi) - f_n \eta \varphi\right) \, dx \, dt = \int_{\Omega_T} \left( (\eta g_n) \cdot \nabla \varphi - \eta f_n \varphi \right) \, dx \, dt
$$
and, by \eqref{eq0907_04},
$$
J_2 = - \int_{\Omega_T} \left( \partial_t \bF_n + I_S^{1-\alpha} v_n \eta'(t) \right) \varphi \, dx \, dt
= \int_{\Omega_T} F_n \varphi \, dx \, dt.
$$
This shows that
$$
\partial_t I_0^{1-\alpha} (\eta v_n) = \operatorname{div} (\eta g_n) + \eta f_n - F_n.
$$
We then see that
$$
\|\eta u_n - \eta u\|_{L_p(\Omega_T)} + \|D_x(\eta u_n) - D_x(\eta u)\|_{L_p(\Omega_T)}
$$
$$
+ \|\eta g_n - \eta g\|_{L_p(\Omega_T)} + \|\eta f_n - \eta f\|_{L_p(\Omega_T)} + \|F_n - F\|_{L_p(\Omega_T)} \to 0
\quad\text{as}\,\,n \to \infty
$$
and
$$
D_t^\alpha (\eta v) = \partial_t^\alpha (\eta v) = \partial_t I_0^{1-\alpha} (\eta v) = \operatorname{div} (\eta g) + \eta f - F.
$$
Therefore, $\eta v \in \cH_{p,0}^{\alpha,1}(\Omega_T)$ and \eqref{eq0727_01} follows upon noting \eqref{eq0907_05}.
The lemma is proved.
\end{proof}

\section{Auxiliary results}
                                    \label{sec4}

In Lemma \ref{lem0904_1} below, $a^{ij} = a^{ij}(t,x)$ satisfy only the ellipticity condition without any regularity assumptions.

\begin{lemma}[$L_p$ energy estimate]
							\label{lem0904_1}
Let $p \in [2,\infty)$, $T \in (0,\infty)$, and $u \in \cH_{p,0}^{\alpha,1}(\bR^d_T)$ satisfy
\begin{equation}
							\label{eq0904_02}
-\partial_t I_0^{1-\alpha} u + D_i \left(a^{ij} D_j u \right)
= D_i g_i + f
\end{equation}
in $\bR^d_T$, where $g_i, f \in L_p(\bR^d_T)$.
Then for any $\tau \in (0,T]$,
we have
\begin{equation}
							\label{eq1209_01}
\sup_{0 < t < \tau}\| I_0^{1-\alpha}|u|^p(t,\cdot) \|_{L_1(\bR^d)} \leq N \tau^{\alpha(p-2)/2} \|g_i\|^p_{L_p(\bR^d_\tau)} + N \tau^{\alpha(p-1)} \|f\|^p_{L_p(\bR^d_\tau)},
\end{equation}
where $N=N(d,\delta,\alpha,p)$.
We also have
\begin{equation}
							\label{eq0903_01}
\|u\|_{L_p(\bR^d_T)} \leq N T^{\alpha/2} \|g_i\|_{L_p(\bR^d_T)}
+ N T^\alpha \|f\|_{L_p(\bR^d_T)},
\end{equation}
where $N = N(d,\delta,\alpha, p)$.
\end{lemma}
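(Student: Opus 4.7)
My plan is first to prove the supremum bound \eqref{eq1209_01} via an $L_p$ energy argument tailored to the Caputo derivative, and then to derive \eqref{eq0903_01} as a quick corollary. For the latter, since $(\tau-s)^{-\alpha}\ge \tau^{-\alpha}$ for $s\in(0,\tau)$, we have
$$
\|I_0^{1-\alpha}|u|^p(\tau,\cdot)\|_{L_1(\bR^d)}\ge \frac{\tau^{-\alpha}}{\Gamma(1-\alpha)}\|u\|_{L_p(\bR^d_\tau)}^p,
$$
so \eqref{eq0903_01} follows from \eqref{eq1209_01} at $\tau = T$ after taking $p$-th roots.

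For \eqref{eq1209_01}, I would approximate $u$ by smooth $u_n\in C^\infty([0,T]\times\bR^d)$ with compact $x$-support and $u_n(0,x)=0$, as provided by the definition of $\cH_{p,0}^{\alpha,1}(\bR^d_T)$; the smooth bound then passes to the limit since $I_0^{1-\alpha}$ is continuous on $L_p$ and the right-hand side only involves $L_p$ norms. For smooth $u$, test \eqref{eq0904_02} against $|u|^{p-2}u$ and integrate over $(0,t)\times\bR^d$ for $0<t\le \tau\le T$. Spatial integration by parts converts the elliptic term into the positive contribution $(p-1)\int\!\int a^{ij}D_iu\,D_ju\,|u|^{p-2}\,dx\,ds\ge(p-1)\delta\int\!\int |Du|^2|u|^{p-2}\,dx\,ds$, and produces a cross term $-(p-1)\int\!\int g_iD_iu\,|u|^{p-2}\,dx\,ds$ whose modulus is controlled by Young's inequality so as to absorb half of the elliptic energy. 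The Caputo term is handled by the pointwise fractional chain-rule inequality
$$
|u|^{p-2}u\,\partial_t I_0^{1-\alpha}u\ge \tfrac{1}{p}\,\partial_t I_0^{1-\alpha}|u|^p,
$$
which follows from the convexity of $r\mapsto|r|^p$ applied to the Alikhanov-type representation $\partial_t I_0^{1-\alpha}v(t)=\frac{v(t)-v(0)}{\Gamma(1-\alpha)\,t^\alpha}+\frac{\alpha}{\Gamma(1-\alpha)}\int_0^t \frac{v(t)-v(s)}{(t-s)^{\alpha+1}}\,ds$. Integrating in $(s,x)$ and using $I_0^{1-\alpha}|u|^p(0,\cdot)=0$ yields the good lower bound $\frac{1}{p}\int_{\bR^d}I_0^{1-\alpha}|u|^p(t,x)\,dx$ on the Caputo contribution.

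Setting $A:=\sup_{0<t<\tau}\|I_0^{1-\alpha}|u|^p(t,\cdot)\|_{L_1}$, the resulting energy inequality combined with H\"older in $(s,x)$ using exponents $(p/2,p/(p-2))$ and $(p,p/(p-1))$ reads
$$
A \le N\|g\|_{L_p(\bR^d_\tau)}^2\|u\|_{L_p(\bR^d_\tau)}^{p-2}+N\|f\|_{L_p(\bR^d_\tau)}\|u\|_{L_p(\bR^d_\tau)}^{p-1}.
$$
Plugging in $\|u\|_{L_p(\bR^d_\tau)}^p\le \Gamma(1-\alpha)\,\tau^\alpha A$ from the first paragraph and applying Young's inequality to absorb a small multiple of $A$ into the left-hand side gives \eqref{eq1209_01} with exactly the stated powers of $\tau$. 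The step I expect to be the main obstacle is establishing and justifying the fractional chain-rule inequality at the right level of regularity: the pointwise bound is classical for the smooth approximations $u_n$, but I will need to verify that both sides converge appropriately as $n\to\infty$ in $\cH_{p,0}^{\alpha,1}(\bR^d_T)$, and that the approximation is compatible with the Hölder pairings of $|u|^{p-2}|Du|$ with $|g|$ and of $|u|^{p-1}$ with $|f|$ used in the energy estimate.
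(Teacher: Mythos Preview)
Your proposal is correct and follows essentially the same route as the paper: the paper also proves the pointwise convexity inequality $u|u|^{p-2}\partial_t I_0^{1-\alpha}u \ge \tfrac1p\partial_t I_0^{1-\alpha}|u|^p$ for smooth $u$ with $u(0,\cdot)=0$, tests the equation for the approximants $u_n$ against $u_n|u_n|^{p-2}$, uses ellipticity and Young's inequality, and closes with the same bound $\|u\|_{L_p(\bR^d_\tau)}^p \le \Gamma(1-\alpha)\tau^\alpha\|I_0^{1-\alpha}|u|^p(\tau,\cdot)\|_{L_1}$ that you identified. The only cosmetic difference is that the paper introduces Young parameters $\varepsilon_1,\varepsilon_2,\varepsilon_3$ directly rather than first applying H\"older, and for the limit $n\to\infty$ it observes that the bound on $\|I_0^{1-\alpha}|u_n|^p(\tau,\cdot)\|_{L_1}$ makes $\{u_n\}$ Cauchy in the weighted norm $\big(\int_0^\tau\!\int|v|^p(\tau-s)^{-\alpha}\,dx\,ds\big)^{1/p}$, which is exactly what you need to pass the left-hand side to the limit for each fixed $\tau$.
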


\begin{proof}
We first prove that, for $u \in C_0^\infty\left([0,T] \times \bR^d\right)$ such that $u(0,x) = 0$,
\begin{equation}
							\label{eq0904_01}
J:= u(t,x) |u(t,x)|^{p-2} \cdot \partial_t I_0^{1-\alpha} u(t,x)
- \frac{1}{p} \partial_t I_0^{1-\alpha} |u|^p(t,x) \ge 0.
\end{equation}
To see this, for fixed $t\in (0,T)$ and $x\in \bR^d$, let
$$
F(s)=\frac 1 p (|u(s,x)|^p-|u(t,x)|^p)-(u(s,x)-u(t,x))u(t,x)|u(t,x)|^{p-2}
$$
and
$$
F_1(s)=\frac 1 p |u(s,x)|^p,\quad F_2(s)=u(s,x)u(t,x)|u(t,x)|^{p-2}.
$$
By the convexity of the function $|y|^p$, more precisely, by the property $
h(s)\ge h(t)+(s-t)h'(t)$ for a convex function $h$, we see that $F(s)\ge 0$ on $[0,T]$ with the equality at $s=t$. This and integration by parts clearly yield that
$$
\int_0^t (t-s)^{-\alpha}(F_1'(s)-F_2'(s))\,ds
=\int_0^t (t-s)^{-\alpha}F'(s)\,ds\le 0,
$$
which implies \eqref{eq0904_01} because $F_1(0)=F_2(0)=0$.

For $u \in \cH_{p,0}^{\alpha,1}(\bR^d)$ satisfying \eqref{eq0904_02}, we find $u_n \in C_0^\infty\left([0,T] \times \bR^d\right)$ such that $u_n(0,x) = 0$ and $u_n \to u$ in $\cH_{p,0}^{\alpha,1}(\bR^d_T)$.
Then there exist $g_n, f_n \in L_p(\bR^d_T)$ such that
\begin{equation}
							\label{eq0904_03}
-\partial_t I_0^{1-\alpha} u_n + D_i \left( a^{ij} D_j u_n \right)
= \operatorname{div} g_n + f_n
\end{equation}
in $\bR^d_T$ and
$$
\|g - g_n\|_{L_p(\bR^d_T)} + \|f_n - f\|_{L_p(\bR^d_T)} \to 0
\quad\text{as}\,\,n \to \infty.
$$
By multiplying \eqref{eq0904_03} by $u_n|u_n|^{p-2}$, integrating over $(0,\tau) \times \bR^d$, $\tau \in (0,T]$,
and using \eqref{eq0904_01}, we obtain that
\begin{multline}
							\label{eq0905_01}
\frac{1}{p}\int_{\bR^d_\tau} \partial_t I_0^{1-\alpha} |u_n|^p (t,x) \, dx \, dt + \int_{\bR^d_\tau} a^{ij} D_j u_n D_i \left( u_n |u_n|^{p-2} \right) \, dx \, dt
\\
\leq \int_{\bR^d_\tau} g_n \cdot \nabla \left(u_n |u_n|^{p-2}\right) \, dx \, dt - \int_{\bR^d_\tau} f_n u_n |u_n|^{p-2} \, dx \, dt.
\end{multline}
Since
\begin{equation*}
\int_{\bR^d_\tau} \partial_t I_0^{1-\alpha} |u_n|^p(t,x) \, dx \, dt = \int_{\bR^d}\left(I_0^{1-\alpha}|u_n|^p\right)(\tau,x) \, dx,
\end{equation*}
using the ellipticity condition and Young's inequality, from \eqref{eq0905_01} we have that, for any $\varepsilon_1, \varepsilon_2, \varepsilon_3 > 0$,
$$
\|I_0^{1-\alpha}|u_n|^p(\tau,\cdot)\|_{L_1(\bR^d)} +
\int_{\bR^d_\tau} |Du_n|^2 |u_n|^{p-2} \, dx \, dt
$$
$$
\leq N \varepsilon_1^{-1} \int_{\bR^d_\tau} |g_n|^2 |u_n|^{p-2} \, dx \, dt + \varepsilon_1 \int_{\bR^d_\tau} |Du_n|^2 |u_n|^{p-2} \, dx \, dt
$$
$$
+ N \varepsilon_2^{1-p} \int_{\bR^d_\tau} |f_n|^p \, dx \, dt + \varepsilon_2 \int_{\bR^d_\tau} |u_n|^p \, dx \, dt,
$$
where
$$
\int_{\bR^d_\tau} |g_n|^2 |u_n|^{p-2} \, dx \, dt \leq N\varepsilon_3^{(2-p)/2} \int_{\bR^d_\tau} |g_n|^p \, dx \, dt + \varepsilon_3 \int_{\bR^d_\tau} |u_n|^p \, dx \, dt
$$
and $N = N(d,\delta,p)$.
Then choose $\varepsilon_1 = 1/2$ so that we have
\begin{multline}
							\label{eq0904_04}
\|I_0^{1-\alpha}|u_n|^p(\tau,\cdot)\|_{L_1(\bR^d)} \leq N \varepsilon_3^{(2-p)/2} \int_{\bR^d_\tau} |g_n|^p \, dx \, dt
\\
+ N \varepsilon_2^{1-p} \int_{\bR^d_\tau} |f_n|^p \, dx \, dt + (\varepsilon_2 + N \varepsilon_3) \int_{\bR^d_\tau} |u_n|^p \, dx \, dt
\end{multline}
for any $\varepsilon_2, \varepsilon_3 > 0$, where $N = N(d,\delta,p)$.
We then note that
\begin{multline}
							\label{eq0904_05}
\int_{\bR^d_\tau} |u_n(s,x)|^p \, ds \, dx \leq \tau^\alpha \int_{\bR^d} \int_0^\tau (\tau-s)^{-\alpha} |u_n(s,x)|^p \, ds \, dx
\\
= \Gamma(1-\alpha)\tau^\alpha \int_{\bR^d} \left(I_0^{1-\alpha} |u_n|^p\right)(\tau,x) \, dx.
\end{multline}
Using this and \eqref{eq0904_04} with suitable $\varepsilon_2, \varepsilon_3 > 0$,
we have
\begin{equation}
							\label{eq1209_02}
\|I_0^{1-\alpha}|u_n|^p(\tau,\cdot)\|_{L_1(\bR^d)} \leq N \tau^{\alpha(p-2)/2} \|g_n\|_{L_p(\bR^d_\tau)}^p + N \tau^{\alpha(p-1)} \|f_n\|_{L_p(\bR^d_\tau)}^p.
\end{equation}
This implies that, for each $\tau \in (0,T]$, the sequence $\{u_n\}$ is Cauchy in the norm
$$
\|v\|_{L_{p,(\tau-\cdot)^{-\alpha}}\left((0,\tau); L_p(\bR^d)\right)} := \left(\int_{\bR^d} \int_0^\tau |v(s,x)|^p (\tau-s)^{-\alpha} \, ds \, dx \right)^{1/p}.
$$
Since
$$
\|v\|_{L_p\left((0,\tau) \times \bR^d\right)} \leq \tau^\alpha \|v\|_{L_{p,(\tau-\cdot)^{-\alpha}}\left((0,\tau); L_p(\bR^d)\right)}
$$
and $u_n \to u$ in $L_p\left((0,\tau) \times \bR^d\right)$, we conclude that, for each $\tau \in (0,T]$,
$$
\|I_0^{1-\alpha}|u|^p(\tau,\cdot)\|_{L_1(\bR^d)} \leq N \tau^{\alpha(p-2)/2} \|g\|_{L_p(\bR^d_\tau)}^p + N \tau^{\alpha(p-1)} \|f\|_{L_p(\bR^d_\tau)}^p.
$$
This proves \eqref{eq1209_01}.

From \eqref{eq1209_02} and \eqref{eq0904_05} with $\tau = T$, we also have
$$
\int_{\bR^d} \int_0^T |u_n(s,x)|^p \, ds \,dx \leq N T^{\alpha p/2} \|g_n\|_{L_p(\bR^d_T)}^p + N T^{\alpha p} \|f_n\|_{L_p(\bR^d_T)}^p,
$$
where $N = N(d,\delta,\alpha,p)$.
By letting $n \to \infty$, we obtain \eqref{eq0903_01}.
\end{proof}

To prove Theorem \ref{thm0412_1}, we first prove the theorem for $p = 2$ in the proposition below.
In fact, when $p=2$ Theorem \ref{thm0412_1} holds for $a^{ij} = a^{ij}(t,x)$ with no regularity assumptions as in Lemma \ref{lem0904_1}.

\begin{proposition}
							\label{prop0720_1}
Theorem \ref{thm0412_1} holds when $p=2$.
\end{proposition}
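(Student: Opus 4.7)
The plan is to establish the a priori estimates \eqref{eq0411_04}, \eqref{eq0904_06}, and \eqref{eq0905_04} via the energy method, and then to obtain existence by the method of continuity, using the base case of constant coefficients solved by Fourier transform.

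For the a priori estimates, the idea is to test the equation \eqref{eq0411_03} against $u$ itself. Since $u \in \cH_{2,0}^{\alpha,1}(\bR^d_T)$ and $p=q=2$, by the definition of $\cH_{2,0}^{\alpha,1}$ there is an approximating sequence $\{u_n\} \subset C^\infty([0,T]\times \bR^d)$ with $u_n(0,x)=0$ and $u_n$ of compact support in $x$ converging to $u$ in $\cH_{2}^{\alpha,1}$. Writing \eqref{eq0103_01} with $\varphi = u_n$ (after extending by the routine truncation in $t$ to make it admissible near $t=T$) and passing to the limit yields
\begin{equation*}
\int_0^T\!\!\int_{\bR^d} u\,\partial_t^\alpha u \, dx\, dt + \int_0^T\!\!\int_{\bR^d} a^{ij} D_j u\, D_i u \, dx\, dt + \lambda \int_0^T\!\!\int_{\bR^d} u^2 \, dx\, dt = -\int_0^T\!\!\int_{\bR^d} g_i D_i u \, dx\, dt - \int_0^T\!\!\int_{\bR^d} f u \, dx\, dt.
\end{equation*}
The convexity inequality \eqref{eq0904_01} from Lemma \ref{lem0904_1}, with $p=2$, shows that
\begin{equation*}
\int_0^T\!\!\int_{\bR^d} u\,\partial_t^\alpha u \, dx\, dt \ge \tfrac{1}{2}\int_{\bR^d} (I_0^{1-\alpha}|u|^2)(T,x)\, dx \ge 0.
\end{equation*}
Combined with uniform ellipticity and Young's inequality applied to $|g_i D_i u|$ (with parameter $\delta/2$) and to $|fu|$ (with parameter $\lambda/2$), one absorbs $|Du|^2$ and $\lambda u^2$ on the left and obtains \eqref{eq0411_04} when $\lambda>0$. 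For $\lambda=0$ and $f=0$, the term $\int fu$ vanishes and testing yields $\|Du\|_{L_2}^2 \le C\|g\|_{L_2}^2$ directly, with $C$ independent of $T$, proving \eqref{eq0905_04}. For $\lambda=0$ with general $f$, we first invoke estimate \eqref{eq0903_01} of Lemma \ref{lem0904_1} to bound $\|u\|_{L_2(\bR^d_T)}$ in terms of $\|g\|$ and $\|f\|$ (with $T$-dependent constant), and then use this in place of the absorption step to derive \eqref{eq0904_06}.

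For existence, the plan is the method of continuity. Define $a^{ij}_\theta = \theta a^{ij}(x_1) + (1-\theta)\delta^{ij}$ for $\theta \in [0,1]$; these are uniformly elliptic with constant $\min(\delta,1)$ independent of $\theta$, and the a priori estimate just established holds uniformly in $\theta$. For $\theta = 0$, the equation $-\partial_t^\alpha u + \Delta u - \lambda u = D_i g_i + f$ with zero initial condition is solvable in $\cH_{2,0}^{\alpha,1}(\bR^d_T)$ by Fourier transform in $x$: for each frequency $\xi$, $\hat u(\cdot,\xi)$ satisfies a scalar linear fractional ODE whose solution is given explicitly by Mittag--Leffler functions, and Plancherel's theorem together with pointwise estimates on the resolvent gives the required $L_2$ bounds. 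Closedness of the subset $\Theta \subset [0,1]$ for which the corresponding operator is invertible from $\cH_{2,0}^{\alpha,1}$ to its data space follows from the uniform a priori bound; openness follows by a standard Neumann series argument using the same bound as a Lipschitz constant for the perturbation. Hence $\Theta = [0,1]$, giving solvability for $\theta = 1$.

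The main obstacle is to make the test-function argument rigorous: the natural weak formulation \eqref{eq0103_01} requires $\varphi \in C_0^\infty([0,T)\times\bR^d)$, so one must approximate $u$ first by the sequence from the definition of $\cH_{2,0}^{\alpha,1}$, and separately cut off near $t=T$ using a factor $\zeta_k(t)$ with $\zeta_k \to 1$; the terms involving $I_0^{1-\alpha} u$ paired with $\partial_t(u_n \zeta_k)$ must be handled by integrating by parts (after verifying $D_t^\alpha u_n \in L_2$ in the sense of \eqref{eq0720_01}) and then sending $k\to\infty$ and $n\to\infty$ in the correct order. This is essentially the mechanism that Lemma \ref{lem0904_1} already encodes, so the required steps parallel its proof but with $p=2$ and the additional terms $\lambda u$ and $g_i$ present. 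Once the testing identity is in place, both the energy estimates and the continuity argument are standard.
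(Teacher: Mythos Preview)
Your proposal is correct and follows essentially the same approach as the paper: the energy method (testing against $u$ via smooth approximants, using the convexity inequality \eqref{eq0904_01} to control the fractional time-derivative term) for the a priori estimates, and the method of continuity for existence. The only minor deviations are that the paper handles the $\lambda=0$ case by adding and subtracting $\varepsilon u$ rather than testing directly, and it obtains solvability of the base equation $-\partial_t^\alpha u+\Delta u=D_ig_i+f$ by quoting the non-divergence theory in \cite{arXiv:1806.02635} instead of a Fourier/Mittag--Leffler computation; both choices are equivalent in effect.
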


\begin{proof}
A version of this result without $\lambda u$ term can be found in \cite{MR2538276}.
For the reader's convenience, we present here a detailed proof.

We prove the a priori estimates \eqref{eq0411_04}, \eqref{eq0904_06}, and \eqref{eq0905_04}.
Once these estimates are available, one can use the method of continuity and the solvability of a simple equation such as
$$
-\partial_t^\alpha u + \Delta u = D_i g_i + f
$$
in $\bR^d_T$.
Indeed, to solve this equation in $\cH_{2,0}^{\alpha,1}(\bR^d_T)$, one can use the results for non-divergence form equations in \cite{arXiv:1806.02635} and the a priori estimates \eqref{eq0411_04}, \eqref{eq0904_06}, and \eqref{eq0905_04}. See also Remark \ref{rem2.2}.

Let us first consider the case $\lambda > 0$.
Since $u \in \cH_{2,0}^{\alpha,1}(\bR^d_T)$, there exists a sequence $\{u_n\}$ such that $u_n \in C_0^\infty\left([0,T]\times \bR^d\right)$, $u_n(0,x) = 0$, and
$$
\|u_n - u\|_{\cH_2^{\alpha,1}(\bR^d_T)} \to 0
$$
as $n \to \infty$.
In particular, since
$$
\|\partial_t I_0^{1-\alpha} u_n - \partial_t I_0^{1-\alpha}u\|_{\bH_2^{-1}(\bR^d_T)} \to 0,
$$
there exist $G_n = \left({G_n}_1,\ldots,{G_n}_d\right) \in L_2(\bR^d_T)$ and $F_n \in L_2(\bR^d_T)$ such that
$$
\partial_t I_0^{1-\alpha} u_n - \partial_t I_0^{1-\alpha} u = \operatorname{div} G_n + F_n
$$
and
$$
\|G_n\|_{L_2(\bR^d_T)} + \|F_n\|_{L_2(\bR^d_T)} \to 0
\quad\text{as}\,\,n \to \infty.
$$
Then
$$
- \partial_t I_0^{1-\alpha} u_n = -\partial_t I_0^{1-\alpha} u - \operatorname{div} G_n - F_n
$$
$$
= D_i \left(g_i - a^{ij} D_j u - {G_n}_i \right) + \lambda u + f - F_n.
$$
Hence,
$$
-\partial_t I_0^{1-\alpha} u_n + D_i\left(a^{ij} D_j u_n\right) - \lambda u_n=\operatorname{div} g_n + f_n
$$
in $\bR^d_T$, where
$$
{g_n}_i = a^{ij} D_j(u_n - u) + g_i - {G_n}_i \to g_i, \quad f_n = \lambda(u-u_n) + f - F_n \to f
$$
in $L_2(\bR^d_T)$.
Multiplying both sides of the above equation by $u_n$ and integrating by parts, we have
$$
\int_{\bR^d_T} \left(\partial_t I_0^{1-\alpha}u_n\right) u_n \, dx \, dt + \int_{\bR^d_T} a^{ij} D_j u_n D_i u_n \, dx \, dt + \lambda \int_{\bR^d_T} u_n^2 \, dx \, dt
$$
$$
= \int_{\bR^d_T} g_n \cdot \nabla u_n \, dx \, dt - \int_{\bR^d_T} f_n u_n \, dx \, dt.
$$
By \eqref{eq0904_01} with $p=2$ we have
$$
\int_{\bR^d_T} \left(\partial_t I_0^{1-\alpha}u_n\right) u_n \, dx \, dt \ge 0.
$$
It then follows from the ellipticity condition and Young's inequality that
$$
\|D u_n\|_{L_2(\bR^d_T)} + \sqrt{\lambda} \|u_n\|_{L_2(\bR^d_T)} \leq N \|g_n\|_{L_2(\bR^d_T)} + \frac{N}{\sqrt{\lambda}}\|f_n\|_{L_2(\bR^d_T)},
$$
where $N = N(d,\delta)$.
By letting $n \to \infty$, we obtain \eqref{eq0411_04}.

To prove \eqref{eq0904_06} and \eqref{eq0905_04}, we consider
$$
-\partial_t^\alpha u + D_i \left(a^{ij} D_ju\right) - \varepsilon u = D_i g_i + f - \varepsilon u
$$
in $\bR^d_T$, where $\varepsilon > 0$.
Then by the estimate \eqref{eq0411_04}, we have
$$
\|Du\|_{L_p(\bR^d_T)} \leq N \|g_i\|_{L_p(\bR^d_T)} + \frac{N}{\sqrt{\varepsilon}}\|f\|_{L_p(\bR^d_T)} + N \sqrt{\varepsilon}\|u\|_{L_p(\bR^d_T)}.
$$
If $f = 0$, then by letting $\varepsilon \to 0$, we obtain \eqref{eq0905_04}.
Otherwise, set $\varepsilon = 1$ and combine the above estimate with \eqref{eq0903_01} for $p=2$.
\end{proof}

\begin{lemma}[Local estimate for divergence form equations]
							\label{lem0731_1}
Let $\lambda \geq 0$, $p_0\in (1,\infty)$, $\alpha \in (0,1)$, $T \in (0,\infty)$, and $0 < r < R < \infty$.
If Theorem \ref{thm0412_1} holds with this $p_0$ and $u \in \cH_{p_0,0}^{\alpha,1}\left((0, T) \times B_R\right)$ satisfies
$$
-\partial_t^\alpha u + D_i \left(a^{ij}(x_1) D_j u \right) - \lambda u
= D_i g_i + f,
$$
in $(0,T) \times B_R$, where $g_i, f \in L_{p_0}\left((0,T)\times B_R\right)$,
then
\begin{multline*}
\| D u\|_{L_{p_0}\left((0,T) \times B_r\right)} + \sqrt{\lambda}\|u\|_{L_{p_0}\left((0,T) \times B_r\right)}
\le \frac{N}{R-r} \|u\|_{L_{p_0}\left((0,T) \times B_R\right)}
\\
+ N \|g_i\|_{L_{p_0}\left((0,T) \times B_R\right)} + \frac{N(R-r)}{\sqrt{\lambda(R-r)^2+1}} \|f\|_{L_{p_0}\left((0,T) \times B_R\right)},
\end{multline*}
where $N = N(d,\delta,\alpha,p_0)$.
\end{lemma}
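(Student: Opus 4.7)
My plan is to introduce a spatial cutoff $\eta=\eta(x)$ supported strictly inside $B_R$, write down the equation satisfied by $v:=\eta u$ on the whole space, apply the global estimate Theorem \ref{thm0412_1} (assumed valid for this particular $p_0$) with a suitably shifted parameter $\lambda+\mu$, and then iterate the resulting single-step inequality over a geometric sequence of shrinking radii in order to absorb the unwanted gradient term that appears on the right-hand side.

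For the setup, fix $r\le\rho_1<\rho_2\le R$ and take $\eta\in C_0^\infty(B_{\rho_2})$ with $\eta\equiv 1$ on $B_{\rho_1}$ and $|D\eta|\le N/(\rho_2-\rho_1)$. Because $\operatorname{supp}\eta$ has positive distance from $\partial B_R$, multiplying the approximating sequence for $u$ by $\eta$ and extending by zero produces an approximating sequence in $C^\infty([0,T]\times\bR^d)$ (vanishing for large $|x|$ and at $t=0$) showing that $v\in\cH_{p_0,0}^{\alpha,1}(\bR^d_T)$. Since $\eta$ is $t$-independent we have $\partial_t^\alpha v=\eta\,\partial_t^\alpha u$, and testing the equation for $u$ against $\eta\varphi$ with $\varphi\in C_0^\infty([0,T)\times\bR^d)$ yields, after distributing the product rule,
\begin{equation*}
-\partial_t^\alpha v+D_i(a^{ij}D_j v)-\lambda v = D_i\hat g_i+\hat f\quad\text{in }\bR^d_T,
\end{equation*}
where $\hat g_i=\eta g_i+a^{ij}(D_j\eta)u$ and $\hat f=\eta f+a^{ij}(D_i\eta)D_j u-(D_i\eta)g_i$.

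Now for a large constant $C_0\ge 1$ to be chosen, set $\mu:=C_0/(\rho_2-\rho_1)^2$ and rewrite the equation with parameter $\lambda+\mu$, moving $-\mu v$ to the right-hand side. Theorem \ref{thm0412_1}, applied with $\lambda+\mu>0$, together with the inequalities $1/\sqrt{\lambda+\mu}\le (\rho_2-\rho_1)/\sqrt{C_0}$, $\mu/\sqrt{\lambda+\mu}\le\sqrt{C_0}/(\rho_2-\rho_1)$, $\sqrt{\lambda+\mu}\ge\sqrt{\lambda}$, and the pointwise bounds on $\hat g,\hat f$, gives the single-step estimate
\begin{equation*}
\Phi(\rho_1)\le\frac{N}{\sqrt{C_0}}\,\Phi(\rho_2)+\frac{N}{\rho_2-\rho_1}\|u\|_R+N\|g\|_R+\frac{N(\rho_2-\rho_1)}{\sqrt{\lambda(\rho_2-\rho_1)^2+1}}\|f\|_R,
\end{equation*}
where $\Phi(\rho):=\|Du\|_{L_{p_0}((0,T)\times B_\rho)}+\sqrt{\lambda}\,\|u\|_{L_{p_0}((0,T)\times B_\rho)}$ and $\|\cdot\|_R$ denotes the $L_{p_0}$ norm over $(0,T)\times B_R$. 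Choose $C_0$ so large that $N/\sqrt{C_0}\le 1/4$ and define $\rho_k:=r+(R-r)(1-2^{-k})$, so that $\rho_{k+1}-\rho_k=(R-r)2^{-k-1}$. Iterating with $(\rho_1,\rho_2)=(\rho_k,\rho_{k+1})$ yields
\begin{equation*}
\Phi(r)\le 4^{-n}\Phi(\rho_n)+\sum_{k=0}^{n-1}4^{-k}\left[\frac{N\,2^{k+1}}{R-r}\|u\|_R+N\|g\|_R+\frac{N(R-r)2^{-k-1}}{\sqrt{\lambda((R-r)2^{-k-1})^2+1}}\|f\|_R\right].
\end{equation*}
The geometric series $\sum(1/2)^k$ and $\sum(1/4)^k$ converge, while monotonicity of $s\mapsto s/\sqrt{\lambda s^2+1}$ on $(0,\infty)$ bounds each $\|f\|_R$-summand by $(R-r)/\sqrt{\lambda(R-r)^2+1}$ times a summable factor. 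Since $\Phi(\rho_n)\le\Phi(R)<\infty$ (as $u\in\cH_{p_0,0}^{\alpha,1}((0,T)\times B_R)$), letting $n\to\infty$ produces the desired inequality.

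The essential obstacle is that $\hat f$ inevitably contains $a^{ij}(D_i\eta)D_j u$, generating a term of the form $\tfrac{N}{(\rho_2-\rho_1)\sqrt{\lambda+\mu}}\|Du\|_{L_{p_0}((0,T)\times B_{\rho_2})}$ which cannot be absorbed directly because of the genuine mismatch between $B_{\rho_1}$ and $B_{\rho_2}$. The twin trick of shifting $\lambda$ by $\mu\sim(\rho_2-\rho_1)^{-2}$ (which kills the $1/(\rho_2-\rho_1)$ blow-up in this coefficient while retaining the correct $\|f\|_R$-factor $(R-r)/\sqrt{\lambda(R-r)^2+1}$) followed by a Caccioppoli-type geometric telescope of radii is what lets us reduce the gradient coefficient to a summable sequence and finish.
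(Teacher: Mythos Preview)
Your proof is correct and follows essentially the same strategy as the paper: multiply by a spatial cutoff, shift $\lambda$ by a parameter of order $(\text{gap})^{-2}$, apply Theorem~\ref{thm0412_1}, and exploit a geometric sequence of radii to absorb the bad gradient term. The only difference is purely organizational: the paper writes down all the cutoff equations at once (with cutoffs $\zeta_k$ and shifts $\lambda_k=N_0^2 4^{k}\varepsilon^{-2}(R-r)^{-2}$), multiplies the $k$-th inequality by $\varepsilon^k$, and sums so that the gradient terms on the two sides cancel as infinite tails; you instead first isolate a clean single-step inequality between two generic radii $\rho_1<\rho_2$ and then iterate it as a recursion. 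Both routes are standard and yield the same constants up to harmless factors; your presentation via a single-step estimate plus recursion is arguably slightly more transparent, while the paper's direct summation avoids having to bound $4^{-n}\Phi(\rho_n)$ separately.
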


\begin{proof}
Set
$$
r_0 = r, \quad r_k = r+(R-r)\sum_{j=1}^k \frac{1}{2^j}, \quad k = 1, 2, \ldots.
$$
Let $\zeta_k = \zeta_k(x)$ be an infinitely differentiable function defined on $\bR^d$ such that
$$
\zeta_k = 1 \quad \text{on} \quad B_{r_k}, \quad \zeta_k = 0 \quad \text{outside} \quad \bR^d \setminus B_{r_{k+1}},
$$
and
$$
|D_x \zeta_k(x)| \le N(d)\frac{2^k}{R-r}.
$$
Then $u \zeta_k$ satisfies
$$
-\partial_t^\alpha(u \zeta_k) + D_i\left(a^{ij} D_j(u \zeta_k) \right) - \lambda (u \zeta_k)
$$
$$
= D_i \left( a^{ij} u D_j \zeta_k + g_i \zeta_k \right) + a^{ij} D_i \zeta_k D_j u - g_i D_i \zeta_k + f \zeta_k
$$
in $\bR^d_T$.
For each non-negative integer $k$, let $\{\lambda_k\}$ be an increasing sequence of positive numbers to be specified below.
We then write
$$
-\partial_t^\alpha(u \zeta_k) + D_i\left(a^{ij} D_j(u \zeta_k) \right) - (\lambda + \lambda_k) (u \zeta_k)
$$
$$
= D_i \left( a^{ij} u D_j \zeta_k + g_i \zeta_k \right) + a^{ij} D_i \zeta_k D_j u - g_i D_i \zeta_k + f \zeta_k - \lambda_k (u \zeta_k)
$$
$$
=: \operatorname{div} g_k + f_k,
$$
where
$$
{g_k}_i = a^{ij} u D_j \zeta_k + g_i \zeta_k, \quad f_k = a^{ij} D_i \zeta_k D_j u - g_i D_i \zeta_k + f \zeta_k - \lambda_k u \zeta_k.
$$
By Theorem \ref{thm0412_1} we have
$$
\|D(u\zeta_k)\|_{L_{p_0}(\bR^d_T)} + \sqrt{\lambda + \lambda_k} \|u\zeta_k\|_{L_{p_0}(\bR^d_T)}
$$
$$
\leq N \|g_k\|_{L_{p_0}(\bR^d_T)} + \frac{N}{\sqrt{\lambda + \lambda_k}} \|f_k\|_{L_{p_0}(\bR^d_T)},
$$
where $N$ is the constant in \eqref{eq0411_04}.
Note that
$$
\|g_k\|_{L_{p_0}(\bR^d_T)} \leq N \frac{2^k}{R-r} \|u\|_{L_{p_0}\left((0,T) \times B_R\right)} + N \|g\|_{L_{p_0}\left((0,T) \times B_R\right)},
$$
$$
\|f_k\|_{L_{p_0}(\bR^d_T)} \leq N \frac{2^k}{R-r} \|D(u \zeta_{k+1})\|_{L_{p_0}(\bR^d_T)} + N \frac{2^k}{R-r} \|g\|_{L_{p_0}\left((0,T) \times B_R\right)}
$$
$$
+ N\|f\|_{L_{p_0}\left((0,T) \times B_R\right)} + N \lambda_k \|u\|_{L_{p_0}\left((0,T) \times B_R\right)}.
$$
Hence,
$$
\|D(u\zeta_k)\|_{L_{p_0}(\bR^d_T)} + \sqrt{\lambda + \lambda_k} \|u\zeta_k\|_{L_{p_0}(\bR^d_T)}
$$
$$
\leq N \frac{2^k}{R-r} \|u\|_{L_{p_0}\left((0,T) \times B_R\right)} + N \|g\|_{L_{p_0}\left((0,T) \times B_R\right)}
$$
$$
+ \frac{N}{\sqrt{\lambda + \lambda_k}} \frac{2^k}{R-r} \|D(u \zeta_{k+1})\|_{L_{p_0}(\bR^d_T)} + \frac{N}{\sqrt{\lambda + \lambda_k}} \frac{2^k}{R-r} \|g\|_{L_{p_0}\left((0,T) \times B_R\right)}
$$
$$
+ \frac{N}{\sqrt{\lambda + \lambda_k}}\|f\|_{L_{p_0}\left((0,T) \times B_R\right)} + \frac{N}{\sqrt{\lambda + \lambda_k}} \lambda_k \|u\|_{L_{p_0}\left((0,T) \times B_R\right)}.
$$
Furthermore, by taking the same $N_0 = N_0(d, \delta,\alpha,p_0) \geq 1$, we have
$$
\|D(u\zeta_k)\|_{L_{p_0}(\bR^d_T)} + \sqrt{\lambda} \|u\zeta_k\|_{L_{p_0}(\bR^d_T)}
$$
$$
\leq N_0 \left(\frac{2^k}{R-r} + \sqrt{\lambda_k}\right) \|u\|_{L_{p_0}\left((0,T) \times B_R\right)} + N_0 \left(1+ \frac{1}{\sqrt{\lambda_k}} \frac{2^k}{R-r} \right)\|g\|_{L_{p_0}\left((0,T) \times B_R\right)}
$$
$$
+ \frac{N_0}{\sqrt{\lambda + \lambda_0}}\|f\|_{L_{p_0}\left((0,T) \times B_R\right)} + \frac{N_0}{\sqrt{\lambda_k}} \frac{2^k}{R-r} \|D(u \zeta_{k+1})\|_{L_{p_0}(\bR^d_T)}.
$$
Multiply both sides by $\varepsilon^k$ and make summations with respect to $k = 0,1,2, \ldots$ to get
$$
\sum_{k=0}^\infty \varepsilon^k \|D(u\zeta_k)\|_{L_{p_0}(\bR^d_T)} + \sqrt{\lambda} \sum_{k=0}^\infty \varepsilon^k \|u\zeta_k\|_{L_{p_0}(\bR^d_T)}
$$
$$
\leq N_0 \|u\|_{L_{p_0}\left((0,T) \times B_R\right)} \sum_{k=0}^\infty \varepsilon^k \left(\frac{2^k}{R-r} + \sqrt{\lambda_k}\right)
$$
$$
+ N_0 \|g\|_{L_{p_0}\left((0,T) \times B_R\right)} \sum_{k=0}^\infty \varepsilon^k \left(1+ \frac{1}{\sqrt{\lambda_k}} \frac{2^k}{R-r} \right) + \frac{N_0}{\sqrt{\lambda + \lambda_0}} \|f\|_{L_{p_0}\left((0,T) \times B_R\right)} \sum_{k=0}^\infty \varepsilon^k
$$
$$
+ N_0 \sum_{k=0}^\infty \frac{\varepsilon^k 2^k}{\sqrt{\lambda_k}(R-r)}\|D(u \zeta_{k+1})\|_{L_{p_0}(\bR^d_T)}.
$$
Now we set
$$
\varepsilon = 2^{-3}, \quad \sqrt{\lambda_k} = \frac{N_0 2^k}{\varepsilon(R-r)}, \quad k = 0, 1, \ldots,
$$
so that
$$
N_0 \frac{\varepsilon^k 2^k}{\sqrt{\lambda_k}(R-r)} = \varepsilon^{k+1}.
$$
Then
$$
\|D(u\zeta_0)\|_{L_{p_0}(\bR^d_T)} + \sum_{k=1}^\infty \varepsilon^k \|D(u\zeta_k)\|_{L_{p_0}(\bR^d_T)} + \sqrt{\lambda} \|u \zeta_0\|_{L_{p_0}(\bR^d_T)}
$$
$$
\leq \frac{4N_0(8N_0+1)}{3(R-r)} \|u\|_{L_{p_0}\left((0,T) \times B_R\right)} + \frac{8 N_0 + 1}{7} \|g\|_{L_{p_0}\left((0,T)\times B_R\right)}
$$
$$
+ \frac{8 N_0}{7\sqrt{\lambda + \lambda_0}}\|f\|_{L_{p_0}\left((0,T) \times B_R\right)} + \sum_{k=1}^\infty \varepsilon^k \|D(u \zeta_k)\|_{L_{p_0}(\bR^d_T)}.
$$
We now remove the same summation terms from both sides of the above inequality and use the fact that (recall that $N_0 \geq 1$)
$$
\frac{1}{\sqrt{\lambda + \lambda_0}} \leq \frac{R-r}{\sqrt{\lambda(R-r)^2 + 1}}.
$$
Finally, by observing that, for instance,
$$
\|Du\|_{L_{p_0}\left((0,T)\times B_r\right)} \leq \|D(u\zeta_0)\|_{L_{p_0}(\bR^d_T)},
$$
we obtain the desired inequality in the lemma.
\end{proof}

\begin{lemma}[Local estimate for non-divergence form equations with variable coefficients in $x$]
							\label{lem0824_1}
Let $p \in (1,\infty)$, $T \in (0,\infty)$, and $0 < r < R < \infty$.
Let $a^{ij}$ be uniformly continuous and the coefficients $b^i(t,x)$ and $c(t,x)$ be bounded by $K$.
Suppose that $u \in \bH_{p,0}^{\alpha,2}\left((0,T) \times B_R\right)$ satisfies
$$
- \partial_t I_0^{1-\alpha} u + a^{ij}(x) D_{ij} u + b^i(x) D_i u + c(x) u = f
$$
in $(0,T) \times B_R$.
Then
\begin{align*}
&\| \partial_t I_0^{1-\alpha} u \|_{L_p\left((0,T) \times B_r\right)} + \| D^2 u\|_{L_p\left((0,T) \times B_r\right)} \\
&\le N \left( (R-r)^{-2} + 1 \right) \|u\|_{L_p\left((0,T) \times B_R\right)} + N \|f\|_{L_p\left((0,T) \times B_R\right)},
\end{align*}
where $N$ depends only on $d$, $\delta$, $\alpha$, $p$, $K$, $T$, and the modulus of continuity of $a^{ij}$.
\end{lemma}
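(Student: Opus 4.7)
The plan is to mimic the weighted-sum iteration used in the proof of Lemma \ref{lem0731_1}, with the whole-space divergence-form estimate of Theorem \ref{thm0412_1} replaced by the corresponding whole-space $L_p$ estimate for non-divergence form time-fractional parabolic equations from \cite{arXiv:1806.02635}. Since $a^{ij}(x)$ is uniformly continuous and $b^i(x)$, $c(x)$ are bounded, that cited theory yields a constant $N_1=N_1(d,\delta,\alpha,p,K,T,\text{mod.\ cont.})$ such that every $v\in\bH_{p,0}^{\alpha,2}(\bR^d_T)$ satisfying
$$
-\partial_t I_0^{1-\alpha}v+a^{ij}(x)D_{ij}v+b^i(x)D_iv+c(x)v=\tilde f \quad\text{in } \bR^d_T
$$
obeys $\|\partial_t I_0^{1-\alpha}v\|_{L_p(\bR^d_T)}+\|D^2v\|_{L_p(\bR^d_T)}\le N_1\|\tilde f\|_{L_p(\bR^d_T)}+N_1\|v\|_{L_p(\bR^d_T)}$.

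To set up the iteration, let $r_0=r$ and $r_k=r+(R-r)\sum_{j=1}^k 2^{-j}$, and pick $\zeta_k\in C_0^\infty(\bR^d)$ with $\zeta_k\equiv 1$ on $B_{r_k}$, $\zeta_k\equiv 0$ outside $B_{r_{k+1}}$, and $|D^\ell\zeta_k|\le N(R-r)^{-\ell}2^{k\ell}$ for $\ell=1,2$. The zero extension of $v_k:=\zeta_k u$ belongs to $\bH_{p,0}^{\alpha,2}(\bR^d_T)$ and satisfies the above whole-space equation with
$$
\tilde f_k=\zeta_k f+\bigl(a^{ij}D_{ij}\zeta_k+b^iD_i\zeta_k\bigr)u+2a^{ij}(D_i\zeta_k)(D_ju).
$$
Applying the whole-space bound to $v_k$, the first two pieces of $\tilde f_k$ contribute $N\|f\|_{L_p((0,T)\times B_R)}$ and $N4^k(R-r)^{-2}\|u\|_{L_p((0,T)\times B_R)}$, respectively, while the $v_k$ term is absorbed into the $\|u\|_{L_p((0,T)\times B_R)}$ piece. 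Because $D\zeta_k$ is supported where $\zeta_{k+1}\equiv 1$ and $D\zeta_{k+1}\equiv 0$, we have $D_ju=D_jv_{k+1}$ on $\operatorname{supp}D\zeta_k$, so the cross term is bounded by $N2^k(R-r)^{-1}\|Dv_{k+1}\|_{L_p(\bR^d_T)}$. A Gagliardo--Nirenberg interpolation applied to the compactly supported $v_{k+1}$, with parameter $\theta(R-r)2^{-k}$, then controls $N2^k(R-r)^{-1}\|Dv_{k+1}\|_{L_p(\bR^d_T)}$ by $N\theta\|D^2v_{k+1}\|_{L_p(\bR^d_T)}+N\theta^{-1}4^k(R-r)^{-2}\|u\|_{L_p((0,T)\times B_R)}$.

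Multiplying the resulting estimate for each $v_k$ by $\varepsilon^k$ and summing over $k\ge 0$, with $\varepsilon<1/4$ and $\theta$ chosen so that $N_1N\theta\le\varepsilon$, the $\|D^2v_{k+1}\|$ feedback gets absorbed into the $\varepsilon^{k+1}\|D^2v_{k+1}\|$ already present on the left-hand side; the remaining geometric sums $\sum_k\varepsilon^k4^k$ and $\sum_k\varepsilon^k$ converge. Since $\zeta_0\equiv 1$ and $D\zeta_0\equiv 0$ on $B_r$, the $k=0$ term of the summed left-hand side majorizes $\|\partial_t I_0^{1-\alpha}u\|_{L_p((0,T)\times B_r)}+\|D^2u\|_{L_p((0,T)\times B_r)}$, yielding the claimed inequality.

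The principal hurdle is calibrating $\theta$ and $\varepsilon$ simultaneously: $\theta$ must be small enough that the interpolation feedback $N_1N\theta\|D^2v_{k+1}\|$ can be absorbed at rate $\varepsilon$, while $\varepsilon$ must be small enough that $\sum\varepsilon^k4^k$ converges; both constraints are met by $\theta<\varepsilon/(N_1N)$ and $\varepsilon<1/4$. A secondary point is to confirm that the whole-space $L_p$ theory of \cite{arXiv:1806.02635} applies in the present regularity regime, but this is immediate since uniformly continuous $a^{ij}(x)$ and bounded $b^i,c$ satisfy the small-BMO-in-$(t,x)$ hypothesis of that paper on sufficiently small scales.
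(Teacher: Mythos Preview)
Your proposal is correct and follows essentially the same approach as the paper: the paper's proof simply invokes Theorem 2.4 of \cite{arXiv:1806.02635} for the whole-space non-divergence estimate and then says the localization is done exactly as in Lemma 4.2 of \cite{arXiv:1806.02635}, which is precisely the weighted-sum cutoff iteration you have written out (mirroring the argument of Lemma \ref{lem0731_1} here). Your handling of the cross term $a^{ij}(D_i\zeta_k)D_ju$ via spatial interpolation on the compactly supported $v_{k+1}$ and absorption through the geometric weights is standard and valid.
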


\begin{proof}
Since $a^{ij}$ are uniformly continuous, we can use Theorem 2.4 in \cite{arXiv:1806.02635}.
Then the lemma is proved in the same way as the proof of Lemma 4.2 in \cite{arXiv:1806.02635} is done.
\end{proof}

\begin{lemma}
							\label{lem0814_1}
Let $p \in (1,\infty)$, $0 < T < \infty$, $0 < r < R < \infty$, and $m \in \{1,2,\ldots\}$.
Assume that $a^{ij}(x)$, $b^i(x)$, and $c(x)$ are infinitely differentiable with bounded derivatives.
Suppose that $v \in \bH_{p,0}^{\alpha,2}\left((0,T) \times B_R\right)$ satisfies
$$
- \partial_t I_0^{1-\alpha} v + a^{ij}(x) D_{ij} v + b^i(x) D_i v + c(x) v = f
$$
in $(0,T) \times B_R$,
where
$$
f, D_x f, \ldots, D_x^m f \in L_p \left((0,T) \times B_R\right).
$$
Then, for $k = 1, \ldots, m$,
$$
D_x^k u \in \bH_{p,0}^{\alpha,2}\left((0,T) \times B_r\right).
$$
\end{lemma}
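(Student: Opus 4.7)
The strategy is induction on $k$, upgrading spatial regularity one order at a time via spatial difference quotients combined with Lemma \ref{lem0824_1}. I would fix a strictly decreasing sequence of radii $R = r_0 > r_1 > \cdots > r_m = r$, for instance equally spaced.

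For the base case $k=1$, fix $l \in \{1,\dots,d\}$ and, for small $h$, consider the difference quotient $\delta_h^l v(t,x) := (v(t,x+he_l)-v(t,x))/h$. Since $v \in \bH_{p,0}^{\alpha,2}((0,T)\times B_R)$ and translation in $x$ preserves this space, for $|h|$ small enough we have $\delta_h^l v \in \bH_{p,0}^{\alpha,2}((0,T) \times B_{R'})$ for some $R' \in (r_1,R)$. Applying $\delta_h^l$ to the equation and using the discrete product rule yields, on $(0,T) \times B_{R'}$,
\[
-\partial_t I_0^{1-\alpha}(\delta_h^l v) + a^{ij}(x) D_{ij}(\delta_h^l v) + b^i(x) D_i(\delta_h^l v) + c(x)(\delta_h^l v) = \delta_h^l f + G_h,
\]
where $G_h$ gathers commutator terms of the form $-(\delta_h^l a^{ij})(x) D_{ij} v(t, \cdot + he_l)$ and the analogous contributions from $b^i$ and $c$. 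The smoothness of the coefficients gives $\|G_h\|_{L_p} + \|\delta_h^l f\|_{L_p} \leq C\bigl(\|Df\|_{L_p} + \|D^2 v\|_{L_p} + \|Dv\|_{L_p} + \|v\|_{L_p}\bigr)$ on $B_R$, uniformly in $h$. Applying Lemma \ref{lem0824_1} on the pair of radii $(r_1,R')$ then produces a uniform-in-$h$ bound on $\|\partial_t I_0^{1-\alpha}(\delta_h^l v)\|_{L_p} + \|D^2(\delta_h^l v)\|_{L_p}$ over $(0,T) \times B_{r_1}$. Since $\delta_h^l v \to D_l v$ strongly in $L_p$ and $I_0^{1-\alpha}$ commutes with spatial translation, the weak $L_p$ limits of $\partial_t I_0^{1-\alpha}(\delta_h^l v)$ and $D^2(\delta_h^l v)$ must be $\partial_t I_0^{1-\alpha}(D_l v)$ and $D^2 D_l v$, respectively, giving $D_l v \in \bH_p^{\alpha,2}((0,T) \times B_{r_1})$.

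For the inductive step, once $D_x^{k-1} v \in \bH_{p,0}^{\alpha,2}((0,T) \times B_{r_{k-1}})$ is known, the equation satisfied by $D_x^{k-1} v$ is of the same non-divergence form, with right-hand side equal to $D_x^{k-1} f$ plus a linear combination of derivatives of the coefficients against lower-order spatial derivatives of $v$. All these terms are in $L_p((0,T) \times B_{r_{k-1}})$ by the hypothesis on $f$ and by the previously established regularity of lower-order derivatives. The base-case argument can therefore be repeated on the pair $(r_k, r_{k-1})$, yielding $D_x^k v \in \bH_{p,0}^{\alpha,2}((0,T) \times B_{r_k})$. After $m$ iterations, the conclusion holds on $B_{r_m} = B_r$.

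The main obstacle is to upgrade the weak $L_p$-limit from $\bH_p^{\alpha,2}$ membership to the zero-initial-condition space $\bH_{p,0}^{\alpha,2}$. Each difference quotient $\delta_h^l v$ inherits the zero initial condition from $v$, but this must survive the limiting process. I would handle this by constructing an explicit approximation of $D_l v$ by smooth functions vanishing at $t=0$ and for large $|x|$, following the procedure used in the proofs of Lemmas \ref{lem0206_1} and \ref{lem0207_1}: first cut off in time with $\eta_k(t) = \eta(kt)$ to push the support away from $t=0$, then mollify in $t$ and $x$, and verify convergence in the $\bH_p^{\alpha,2}$ norm using the commutation identity $I_S^{1-\alpha} (u^{(\varepsilon)}) = (I_S^{1-\alpha} u)^{(\varepsilon)}$ already established in Lemma \ref{lem0207_1}. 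This justifies membership in $\bH_{p,0}^{\alpha,2}$ at each inductive stage and closes the induction.
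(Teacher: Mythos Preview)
Your difference-quotient strategy is a valid alternative to the paper's spatial mollification; both feed into Lemma~\ref{lem0824_1} in the same way. The paper mollifies $v$ in $x$, writes the equation for $D_x v^{(\varepsilon)}$ with commutator $g^\varepsilon = \int D\phi_\varepsilon(x-y)\bigl(a^{ij}(x)-a^{ij}(y)\bigr)D_{ij}v(t,y)\,dy$, and uses a two-pass argument: a uniform bound on $g^\varepsilon$ first gives $D^3 v\in L_p$, and then this extra regularity is fed back to show $g^\varepsilon$ is Cauchy, hence $D_x v^{(\varepsilon)}$ is Cauchy in $\bH_{p,0}^{\alpha,2}$ by Lemma~\ref{lem0824_1}. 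With difference quotients your commutator $G_h=-(\delta_h^l a^{ij})\,D_{ij}v(\cdot+he_l)+\cdots$ is already convergent in $L_p$ (to $-D_l a^{ij}\,D_{ij}v+\cdots$) using only $D^2 v\in L_p$ and smoothness of the coefficients, so your route is in this respect slightly more direct than the paper's.

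The one genuine gap is your handling of membership in $\bH_{p,0}^{\alpha,2}$. The cutoff $\eta_k(t)=\eta(kt)$ has $\|\eta_k'\|_\infty\sim k$, and the constant in the commutator estimate~\eqref{eq0207_01} of Lemma~\ref{lem0207_1} depends on this bound; it is not clear that $\eta_k\, D_l v\to D_l v$ in the $\bH_p^{\alpha,2}$ norm. The clean fix is to bypass weak limits and the separate approximation entirely: apply Lemma~\ref{lem0824_1} to the difference $\delta_h^l v-\delta_{h'}^l v$ and observe that $\|\delta_h^l v-\delta_{h'}^l v\|_{L_p}\to 0$ (since $D_l v\in L_p$), $\|\delta_h^l f-\delta_{h'}^l f\|_{L_p}\to 0$ (since $D_l f\in L_p$), and $\|G_h-G_{h'}\|_{L_p}\to 0$ as noted above. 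Thus $\{\delta_h^l v\}_h$ is Cauchy in $\bH_p^{\alpha,2}\bigl((0,T)\times B_{r_1}\bigr)$; since each $\delta_h^l v$ already lies in the closed subspace $\bH_{p,0}^{\alpha,2}$, so does the limit $D_l v$. This is exactly the paper's logical structure transposed from mollifiers to difference quotients, and it closes your argument without any ad hoc time cutoff.
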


\begin{proof}
By induction, we prove only the case $k=1$.
By moving $b^i B_i v + c v$ to the right-hand side of the equation and noting that $D_x\left(b^i D_i v + c v\right) \in L_p\left((0,T) \times B_R\right)$, we may assume that $b^i = c = 0$.
Let $r_0, r_1 \in (r,R)$ such that $r_0 < r_1$.
By Lemma 4.3 in \cite{arXiv:1806.02635}, for $\varepsilon \in (0,R-r_1)$,
$$
v^{(\varepsilon)}, D_x v^{(\varepsilon)} \in \bH_{p,0}^{\alpha,2}\left((0,T) \times B_{r_1}\right),
$$
where $v^{(\varepsilon)}$ is a mollification of $v$ with respect to the spatial variables, that is,
$$
v^{(\varepsilon)}(t,x) = \int_{B_R} \phi_\varepsilon(x-y) v(t,y) \, dy, \quad \phi_\varepsilon(x) = \varepsilon^{-d} \phi(x/\varepsilon),
$$
and $\phi \in C_0^\infty(B_1)$ is a smooth non-negative function with unit integral.
One can check that
\begin{equation}
							\label{eq0824_05}
(\partial_t I_0^{1-\alpha} v)^{(\varepsilon)} = \partial_t I_0^{1-\alpha} (v^{(\varepsilon)})
\end{equation}
in $(0,T) \times B_{r_1}$, which follows from
$$
\left(I_0^{1-\alpha} v\right)^{(\varepsilon)} = I_0^{1-\alpha} (v^{(\varepsilon)})
$$
in $(0,T) \times B_{r_1}$.
Indeed,
\begin{align*}
&\Gamma(1-\alpha) I^{1-\alpha} (v^{(\varepsilon)}) = \int_0^t (t-s)^{-\alpha} \int_{B_R} \phi_\varepsilon(x-y) v(s,y) \, dy \, ds\\
&= \int_{B_R} \phi_\varepsilon(x-y) \int_0^t (t-s)^{-\alpha} v(s,y) \, ds \, dy = \Gamma(1-\alpha) \left(I_0^{1-\alpha} v\right)^{(\varepsilon)}.
\end{align*}
Using \eqref{eq0824_05} we see that $v^{(\varepsilon)}$ satisfies
$$
-\partial_t I_0^{1-\alpha} v^{(\varepsilon)} +a^{ij} D_{ij} v^{(\varepsilon)} = f^{(\varepsilon)} + a^{ij} D_{ij} v^{(\varepsilon)} - \left(a^{ij}D_{ij}v\right)^{(\varepsilon)}
$$
in $(0,T) \times B_{r_1}$.
By differentiating both sides of the above equation in $x$, we also see that $D_x v^{(\varepsilon)}$ satisfies
\begin{equation}
							\label{eq0824_04}
- \partial_t I_0^{1-\alpha} (D_x v^{(\varepsilon)}) + a^{ij} D_{ij} (D_x v^{(\varepsilon)}) = (Df)^{(\varepsilon)} + g^\varepsilon
\end{equation}
in $(0,T) \times B_{r_1}$,
where
\begin{align*}
g^\varepsilon &:= a^{ij} D D_{ij} v^{(\varepsilon)} - D \left(a^{ij}D_{ij}v\right)^{(\varepsilon)}\\
&= \int_{B_R} D \left[\phi_\varepsilon(x-y)\right]\left( a^{ij}(x) - a^{ij}(y) \right) D_{ij} v(t,y) \, dy,
\end{align*}
so that
$$
|g^\varepsilon(t,x)|\le N \int_{B_R} |D\phi((x-y)/\varepsilon)|\varepsilon^{-d-1}|x-y| |D_{ij} v(t,y)| \, dy
$$
and by the Minkowski inequality,
$$
\|g^\varepsilon\|_{L_p((0,T) \times  B_{r_1})} \leq N \|D^2 v\|_{L_p((0,T) \times B_R)}.
$$
By Lemma \ref{lem0824_1} and the last inequality above, we have
$$
\|D^2 D v^{(\varepsilon)}\|_{L_p((0,T) \times B_{r_0})} \leq N \|D v^{(\varepsilon)}\|_{L_p((0,T) \times B_{r_1})} + N \|(Df)^{(\varepsilon)}\|_{L_p((0,T) \times B_{r_1})}
$$
$$
+ N \|D^2 v\|_{L_p((0,T) \times B_R)},
$$
where the right-hand side is bounded independent of $\varepsilon \in (0,R - r_1)$ because $Dv, Df \in L_p((0,T) \times B_R)$.
This implies that
$$
D^2 D v \in L_p((0,T) \times B_{r_0}).
$$
Using this fact, we rewrite $g^\varepsilon$ as
\begin{align*}
g^\varepsilon &= a^{ij}(x) \int_{B_R} \phi_\varepsilon(x-y) DD_{ij} v(t,y) \, dy - \int_{B_R} \phi_\varepsilon(x-y) D a^{ij}(y) D_{ij} v(t,y) \, dy\\
&\quad - \int_{B_R} \phi_\varepsilon(x-y) a^{ij}(y) DD_{ij} v(t,y) \, dy,
\end{align*}
which converges to $- Da^{ij} D_{ij}v$ in $L_p((0,T) \times B_{r'})$, $r' = (r+r_0)/2$.
In particular, $g^\varepsilon$ is Cauchy in $L_p((0,T) \times B_{r'})$.
Since $Dv, Df \in L_p((0,T) \times B_R)$, we also know that $D v^{(\varepsilon)}$ and $(Df)^{(\varepsilon)}$ are Cauchy in $L_p((0,T) \times B_{r'})$ for $\varepsilon \in (0,R - r_1)$.
Then applying Lemma \ref{lem0824_1} to the equation \eqref{eq0824_04} in $(0,T) \times B_{r'}$, we see that
$D_x v^{(\varepsilon)}$ is Cauchy in $\bH_{p,0}^{\alpha,2}((0,T) \times B_r)$.
Since $D_x v^{(\varepsilon)} \to D_x v$ in $L_p((0,T) \times B_r)$, we conclude that $D_x v \in \bH_{p,0}^{\alpha,2}((0,T) \times B_r)$.
\end{proof}

\begin{lemma}
							\label{lem0820_1}
Let $p\in (1,\infty)$, $\alpha \in (0,1)$, $-\infty< S \leq t_0 < T < \infty$, and $0 < r < R < \infty$.
Also let $\eta(t)$ be an infinitely differentiable function defined on $\bR$ such that $\eta(t)=0$ for $t \leq t_0$.
Assume that $a^{ij}(x)$, $b^i(x)$, and $c(x)$ are infinitely differentiable with bounded derivatives.
If $v \in \cH_{p,0}^{\alpha,1}\left((S, T) \times B_R\right)$, $\eta v \in \bH_{p,0}^{\alpha,2}\left((t_0, T) \times B_R\right)$,
and $\eta v$ satisfies the non-divergence form equation
\begin{equation}
							\label{eq0815_01}
-\partial_t I_{t_0}^{1-\alpha}(\eta v) + a^{ij}(x)D_{ij}(\eta v) + b^i(x) D_i (\eta v)+ c(x) (\eta v)
= F
\end{equation}
in $(t_0,T) \times B_R$,
where $F$ is defined as in \eqref{eq0207_04},
then the following hold.

\begin{enumerate}
\item $\eta(t) \partial_t I_S^{1-\alpha} v$ belongs to $L_p\left((t_0,T) \times B_R\right)$ and satisfies
\begin{equation}
							\label{eq0820_01}
\eta(t) \partial_t I_S^{1-\alpha} v = a^{ij} D_{ij}(\eta v) + b^i D_i(\eta v) + c (\eta v)
\end{equation}
a.e. in $(0,T) \times B_R$.

\item $D_x(\eta v) \in \bH_{p,0}^{\alpha,2}\left( (t_0,T) \times B_r \right)$,
$\eta(t)\partial_t I_S^{1-\alpha} v \in \cH_{p,0}^{\alpha,1}\left((t_0,T) \times B_r\right)$, and
$$
\partial_t I_{t_0}^{1-\alpha} \left(\eta(t) \partial_t I_S^{1-\alpha} v\right) \in \bH_p^{-1}\left((t_0,T) \times B_r\right)
$$
satisfies
\begin{equation}
							\label{eq0820_06}
\partial_t I_{t_0}^{1-\alpha} \left(\eta(t) \partial_t I_S^{1-\alpha} v\right) = D_i g_i + f
\end{equation}
in $(0,T) \times B_r$, where
$$
g_i = a^{ij} \partial_t I_{t_0}^{1-\alpha} \left(D_j(\eta v) \right),
$$
$$
f = - D_i a^{ij} \partial_t I_{t_0}^{1-\alpha} D_j (\eta v) + b^i \partial_t I_{t_0}^{1-\alpha} D_i (\eta v) + c \partial_t I_{t_0}^{1-\alpha} (\eta v).
$$
\end{enumerate}
\end{lemma}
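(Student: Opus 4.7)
The plan is to derive both parts from the preceding machinery, treating (1) as essentially an algebraic consequence of Lemma \ref{lem0207_1} combined with the hypothesis \eqref{eq0815_01}, and then bootstrapping with Lemma \ref{lem0814_1} to establish (2).

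For part (1), I would apply \eqref{eq0727_01} of Lemma \ref{lem0207_1} to rewrite
$$
\partial_t I_{t_0}^{1-\alpha}(\eta v) = \eta(t)\, \partial_t I_S^{1-\alpha} v - F
$$
in the distributional sense, and substitute this into \eqref{eq0815_01}. The $F$ terms cancel, yielding
$$
\eta(t)\, \partial_t I_S^{1-\alpha} v = a^{ij} D_{ij}(\eta v) + b^i D_i(\eta v) + c(\eta v)
$$
as distributions on $(t_0,T)\times B_R$. Since $\eta v \in \bH_{p,0}^{\alpha,2}\left((t_0,T)\times B_R\right)$ and $a^{ij},b^i,c$ are bounded, the right-hand side lies in $L_p\left((t_0,T)\times B_R\right)$, so Remark \ref{rem0910_1} (together with its short justification) upgrades the identity \eqref{eq0820_01} to an a.e.\ equality.

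For part (2), I would first establish $D_x(\eta v)\in \bH_{p,0}^{\alpha,2}\left((t_0,T)\times B_r\right)$ by invoking Lemma \ref{lem0814_1} with $m=1$ applied to the equation \eqref{eq0815_01}, where the right-hand side is $F$ as in \eqref{eq0207_04}. The hypothesis $F\in L_p$ is already furnished by Lemma \ref{lem0207_1}; the remaining requirement $D_x F \in L_p\left((t_0,T)\times B_R\right)$ follows by differentiating \eqref{eq0207_04} under the integral sign in $x$, which is valid because $D_x v \in L_p\left((S,T)\times B_R\right)$ thanks to $v\in \cH_{p,0}^{\alpha,1}$, and then applying the same estimate used to derive \eqref{eq0207_01} (that is, Lemma A.2 of \cite{arXiv:1806.02635} with $1-\alpha$ in place of $\alpha$) to $D_x v$ in place of $v$.

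Once $D_x(\eta v)\in \bH_{p,0}^{\alpha,2}\left((t_0,T)\times B_r\right)$, each term on the right-hand side of \eqref{eq0820_01} lies in $L_p\left((t_0,T)\times B_r\right)$ together with its spatial gradient, hence $\eta(t)\,\partial_t I_S^{1-\alpha} v \in L_p$ with spatial gradient in $L_p$. To obtain membership in $\cH_{p,0}^{\alpha,1}$ and the identity \eqref{eq0820_06}, I would apply $\partial_t I_{t_0}^{1-\alpha}$ formally to \eqref{eq0820_01} and use that $a^{ij}$, $b^i$, $c$ depend only on $x$, so this operator commutes with multiplication by them and with spatial derivatives. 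Writing
$$
a^{ij} D_{ij}(\eta v) = D_i\!\left(a^{ij} D_j(\eta v)\right) - (D_i a^{ij}) D_j(\eta v),
$$
applying $\partial_t I_{t_0}^{1-\alpha}$, and using that $D_x(\eta v)\in \bH_{p,0}^{\alpha,2}$ guarantees $\partial_t I_{t_0}^{1-\alpha}D_j(\eta v)\in L_p\left((t_0,T)\times B_r\right)$, produces exactly the divergence-plus-$L_p$ representation \eqref{eq0820_06} with the stated $g_i$ and $f$. The membership in $\cH_{p,0}^{\alpha,1}$ then follows from \eqref{eq0820_06} (which puts $D_t^\alpha$ into $\bH_p^{-1}$) together with the $L_p$ bounds already obtained, after verifying the approximation property by mollifying $\eta v$ spatially and invoking Lemma \ref{lem0206_1}-type extensions to handle the initial value.

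The main obstacle is the third paragraph: justifying rigorously that $\partial_t I_{t_0}^{1-\alpha}$ commutes with $D_i$ and with multiplication by $a^{ij}(x)$ at this level of regularity, and identifying the distributional expression on the left with $\partial_t I_{t_0}^{1-\alpha}(\eta\,\partial_t I_S^{1-\alpha}v)$ in the $\bH_p^{-1}$ sense. This is essentially a test-function computation using Fubini and the smoothness of the coefficients in $x$, but it must be carried out carefully against test functions in $C_0^\infty([t_0,T)\times B_r)$ to verify the zero initial value condition required by the $\cH_{p,0}^{\alpha,1}$ space.
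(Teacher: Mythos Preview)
Your proposal is correct and follows essentially the same route as the paper's proof. For part~(2) the paper makes the approximation explicit by a two-layer scheme---taking smooth approximants $w_n\to \eta v$ in $\bH_p^{\alpha,2}$ together with spatial mollifications $w^{(\varepsilon)}$, then defining the approximating sequence for $u:=\eta\,\partial_t I_S^{1-\alpha}v$ as $u_n^\varepsilon = a^{ij}D_{ij}w_n^{(\varepsilon)}+b^iD_iw_n^{(\varepsilon)}+cw_n^{(\varepsilon)}$ and passing to the limit in $\cH_p^{\alpha,1}$---while Lemma~\ref{lem0206_1} is not actually invoked.
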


\begin{proof}
Without loss of generality we assume   $t_0 = 0$ so that
$$
- \infty < S \leq 0 < T < \infty.
$$
Because $\eta v \in \bH_{p,0}^{\alpha,2}\left((0,T) \times B_R\right)$, by Lemma \ref{lem0207_1} (also see Remark \ref{rem0910_1}), we have
$$
\eta(t) \partial_t I_S^{1-\alpha} v = \partial_t I_0^{1-\alpha} (\eta v) + F \in L_p\left((0,T)\times B_R\right).
$$
Then, \eqref{eq0815_01} is equivalent to \eqref{eq0820_01}.

Since $v \in \cH_{p,0}^{\alpha,1}\left((S,T)\times B_R\right)$, in particular, $Dv \in L_p\left((S,T) \times B_R\right)$, we have $D_x F \in L_p\left((0,T) \times B_R\right)$.
Indeed,
$$
D_x F(t,x) = \frac{\alpha}{\Gamma(1-\alpha)} \int_S^t (t-s)^{-\alpha-1} \left( \eta(s) - \eta(t) \right) D_x v(s,x) \, ds.
$$
From this, Lemma \ref{lem0814_1}, and the fact that $\eta v$ satisfies the non-divergence equation \eqref{eq0815_01},
we have
\begin{equation*}
D_x(\eta v) \in \bH_{p,0}^{\alpha,2}\left((0,T) \times B_{r_1}\right),
\end{equation*}
where $r_1 = (R+r)/2$.
Clearly, we also have $D_x(\eta v) \in \bH_{p,0}^{\alpha,2}\left((0,T) \times B_r\right)$.

Now we show that $\eta(t) \partial_t I_S^{1-\alpha} v \in \cH_{p,0}^{\alpha,1}\left(0,T) \times B_r\right)$ and \eqref{eq0820_06}.
Set $w := \eta v$ and
$u := \eta(t) \partial_t I_S^{1-\alpha}v$.
Since $w \in \bH_{p,0}^{\alpha,2}\left((0,T) \times B_R\right)$, there exists a sequence $\{w_n\} \subset C^\infty\left([0,T] \times B_R\right)$ such that $w_n(0,x) = 0$ and
\begin{equation}
							\label{eq1217_01}
\|w_n - w\|_{\bH_p^{\alpha,2}\left((0,T) \times B_R\right)} \to 0
\quad\text{as}\,\,n \to \infty.
\end{equation}
Set $w^{(\varepsilon)}$ to be the mollification of $w$ with respect to the spatial variable, that is,
$$
w^{(\varepsilon)}(t,x) = \int_{B_R} \phi_\varepsilon(x-y) w(t,y) \, dy, \quad \phi_\varepsilon(x) = \varepsilon^{-d} \phi(x/\varepsilon),
$$
where $\phi \in C_0^\infty(B_1)$ is a smooth non-negative function with unit integral.
By Lemma 4.3 in \cite{arXiv:1806.02635} and its proof, it follows that
$Dw^{(\varepsilon)} \in \bH_{p,0}^{\alpha,2}\left((0,T) \times B_r\right)$
for $\varepsilon \in (0,R-r)$ and, for each fixed $\varepsilon \in (0, R-r)$,
\begin{equation}
							\label{eq0820_07}
\|D w_n^{(\varepsilon)} - D w^{(\varepsilon)}\|_{\bH_p^{\alpha,2}\left((0,T) \times B_r\right)} \to 0
\quad\text{as}\,\,n \to \infty.
\end{equation}
Moreover, since $Dw \in \bH_{p,0}^{\alpha,2}\left((0,T) \times B_{r_1}\right)$, we have
\begin{equation}
							\label{eq0820_05}
\|D w^{(\varepsilon)} - D w\|_{\bH_p^{\alpha,2}\left((0,T) \times B_r\right)} \to 0
\quad\text{as}\,\,\varepsilon \to 0.
\end{equation}
Now, for $\varepsilon \in (0,R-r_1)$, we set
$$
u^\varepsilon := a^{ij}D_{ij} w^{(\varepsilon)} + b^i D_i w^{(\varepsilon)} + c w^{(\varepsilon)},
$$
$$
u_n^\varepsilon := a^{ij}D_{ij} w_n^{(\varepsilon)} + b^i D_i w_n^{(\varepsilon)} + c w_n^{(\varepsilon)}
$$
in $(0,T) \times B_r$.
Then, $u_n^\varepsilon \in C^\infty\left([0,T] \times B_r\right)$ and $u_n^\varepsilon(0,T) = 0$.
If we have
\begin{equation}
							\label{eq0820_03}
\|u_n^\varepsilon - u^\varepsilon\|_{\cH_p^{\alpha,1}\left((0,T) \times B_r\right)} \to 0
\quad\text{as}\,\,n \to \infty,
\end{equation}
we obtain that $u^\varepsilon \in \cH_{p,0}^{\alpha,1}\left((0,T) \times B_r\right)$.
To check \eqref{eq0820_03},
we write
$$
u_n^\varepsilon = D_i \left(a^{ij} D_j w_n^{(\varepsilon)}\right) - D_i a^{ij} D_j w_n^{(\varepsilon)} + b^i D_i w_n^{(\varepsilon)} + c w_n^{(\varepsilon)}.
$$
Then,
$$
\partial_t^\alpha u_n^\varepsilon = D_i \left( a^{ij} \partial_t^\alpha (D_j w_n^{(\varepsilon)}) \right) - D_i a^{ij} \partial_t^\alpha (D_j w_n^{(\varepsilon)}) + b^i \partial_t^\alpha (D_iw_n^{(\varepsilon)})
+c \partial_t^\alpha w_n^{(\varepsilon)},
$$
where $\partial_t^\alpha = \partial_t I_0^{1-\alpha}$, and by \eqref{eq0820_07} and \eqref{eq1217_01}
$$
a^{ij} \partial_t^\alpha (D_j w_n^{(\varepsilon)}) \to a^{ij} \partial_t^\alpha (D_j w^{(\varepsilon)}),
$$
\begin{multline*}
- D_i a^{ij} \partial_t^\alpha (D_j w_n^{(\varepsilon)}) + b^i \partial_t^\alpha (D_i w_n^{(\varepsilon)}) + c \partial_t^\alpha w_n^{(\varepsilon)}
\\
\to
- D_i a^{ij} \partial_t^\alpha (D_j w^{(\varepsilon)}) + b^i \partial_t^\alpha (D_i w^{(\varepsilon)}) + c \partial_t^\alpha w^{(\varepsilon)}	
\end{multline*}
in $L_p\left((0,T) \times B_r\right)$ as $n \to \infty$.
This shows  that
$$
\|\partial_t^\alpha u_n^\varepsilon - \partial_t^\alpha u^\varepsilon\|_{\bH_p^{-1}\left((0,T) \times B_r\right)} \to 0
\quad\text{as}\,\,n \to \infty
$$
and
\begin{equation}
							\label{eq0820_04}
\partial_t^\alpha u^\varepsilon = D_i \left( a^{ij} \partial_t^\alpha (D_j w^{(\varepsilon)}) \right) - D_i a^{ij} \partial_t^\alpha (D_j w^{(\varepsilon)}) + b^i \partial^\alpha_t (D_i w^{(\varepsilon)})+c\partial_t^\alpha w^{(\varepsilon)},
\end{equation}
where $\partial_t^\alpha = \partial_t I_0^{1-\alpha}$.
Using \eqref{eq1217_01} and \eqref{eq0820_07}, we also see that
$$
\|u_n^\varepsilon - u^\varepsilon\|_{L_p\left((0,T) \times B_r\right)} + \|Du_n^\varepsilon - Du^\varepsilon\|_{L_p\left((0,T) \times B_r\right)} \to 0
\quad\text{as}\,\,n \to \infty.
$$
Hence, \eqref{eq0820_03} is proved and $u^\varepsilon \in \cH_{p,0}^{\alpha,1}\left((0,T)\times B_r\right)$.

Now by \eqref{eq0820_01} we note that
$$
u = \eta(t) \partial_t I_S^{1-\alpha} v = a^{ij} D_{ij}w + b^i D_i w + c w
$$
in $(0,T) \times B_r$.
Then from \eqref{eq0820_05} we see that
$$
\|u^\varepsilon - u\|_{L_p\left((0,T) \times B_r\right)} + \|Du^\varepsilon - Du\|_{L_p\left((0,T) \times B_r\right)}  \to 0
\quad\text{as}\,\,\varepsilon \to 0.
$$
From \eqref{eq0820_04} and \eqref{eq0820_05}, we also see that \eqref{eq0820_06} holds and
$$
\|\partial_t^\alpha u^\varepsilon - \partial_t^\alpha u\|_{\bH_p^{-1}\left((0,T) \times B_r\right)}  \to 0
\quad\text{as}\,\,\varepsilon \to 0.
$$
Hence,
$$
\|u^\varepsilon - u\|_{\cH_p^{\alpha,1}\left((0,T) \times B_r\right)} \to 0
\quad\text{as}\,\,\varepsilon \to 0.
$$
Since $u^\varepsilon \in \cH_{p,0}^{\alpha,1}\left((0,T) \times B_r\right)$ for each $\varepsilon \in (0,(R-r)/2)$, we conclude that
$$
(\eta \partial_t I_S^{1-\alpha} v =) u \in \cH_{p,0}^{\alpha,1}\left((0,T) \times B_r\right).
$$
The lemma is proved.
\end{proof}

If $v \in \cH_{p,0}^{\alpha,1}\left((S,T) \times \bR^d\right)$ is a solution to a homogenous equation, one can improve its regularity as follows.

\begin{lemma}
							\label{lem0820_2}
Let $p_0\in (1,\infty)$, $\alpha \in (0,1)$, $-\infty< S \leq t_0 < T < \infty$, and $0 < R < \infty$.
Also let $a^{ij}(x_1)$ be infinitely differentiable functions of $x_1 \in \bR$ with bounded derivatives.
Suppose that Theorem \ref{thm0412_1} holds with this $p_0$ and $v \in \cH_{p_0,0}^{\alpha,1}\left((S, T) \times B_R\right)$ satisfies
$$
-\partial_t I_S^{1-\alpha} v + D_i \left( a^{ij}(x_1) D_j v \right) = D_i g_i + f
$$
in $(S,T) \times B_R$, where $g_i, f \in L_{p_0}\left((S,T) \times B_R\right)$ and $g_i(t,x) = f(t,x) = 0$ on $(t_0,T) \times B_R$.
Then, for any $r_0, r_1 \in (0,R)$ such that $r_0 < r_1$ and any infinitely differentiable function $\eta(t)$ defined on $\bR$ such that $\eta(t) = 0$ for $t \leq t_0$, we have the following.

\begin{enumerate}
\item $\eta v$ belongs to $\bH_{p_0,0}^{\alpha,2}\left((t_0,T) \times B_{r_1}\right)$ and satisfies the non-divergence form equation
$$
-\partial_t I_{t_0}^{1-\alpha} (\eta v) + a^{ij}(x_1) D_{ij}(\eta v) + D_ia^{ij}(x_1) D_j(\eta v) = F
$$
in $(t_0,T) \times B_{r_1}$,
where $F$ is defined as in \eqref{eq0207_04}.

\item $\eta(t) \partial_t I_S^{1-\alpha} v$ belongs to $L_{p_0}\left((t_0,T) \times B_{r_1}\right)$ and satisfies
\begin{equation}
							\label{eq0820_08}
\begin{aligned}
\eta(t) \partial_t I_S^{1-\alpha} v &= a^{ij}(x_1) D_{ij}(\eta v) + D_i a^{ij}(x_1)D_j(\eta v)
\\
&= D_i \left( a^{ij}(x_1) D_j(\eta v) \right)
\end{aligned}
\end{equation}
a.e. in $(t_0,T) \times B_{r_1}$.

\item $
D_x (\eta v)\in \bH_{p_0,0}^{\alpha,2} \left((t_0,T) \times B_{r_0}\right)$ and
$\eta(t) \partial_t I_S^{1-\alpha} v \in \cH_{p_0,0}^{\alpha,1}\left((t_0,T) \times B_{r_0}\right)$.

\item $\eta(t) \partial_t I_S^{1-\alpha}v$ satisfies the divergence form equation
\begin{equation}
							\label{eq0820_09}
- \partial_t I_{t_0}^{1-\alpha} \left(\eta(t) \partial_t I_S^{1-\alpha}v\right) + D_i \left(a^{ij}(x_1) D_j \left(\eta(t)\partial_t I_S^{1-\alpha} v\right) \right) = D_i G_i
\end{equation}
in $(t_0,T) \times B_{r_0}$,
where
\begin{equation}
							\label{eq0824_01}
G_i(t,x) = \sum_{j=1}^d a^{ij}(x_1) \frac{\alpha}{\Gamma(1-\alpha)} \int_S^t (t-s)^{-\alpha-1}\left( \eta(t)-\eta(s) \right) D_j v(s,x) \, ds.
\end{equation}
\end{enumerate}
\end{lemma}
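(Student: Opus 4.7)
The plan is to bootstrap regularity in stages, exploiting the smoothness of $a^{ij}(x_1)$ and the vanishing of the right-hand side on $(t_0,T)\times B_R$.

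For part (1), I would first multiply the weak equation for $v$ by $\eta(t)$ and invoke Lemma \ref{lem0207_1}: since $g_i=f=0$ on $(t_0,T)\times B_R$ and $\eta$ depends only on $t$, the identity $\eta\,\partial_t I_S^{1-\alpha}v = \partial_t I_{t_0}^{1-\alpha}(\eta v)+F$ shows that $\eta v\in\cH_{p_0,0}^{\alpha,1}((t_0,T)\times B_R)$ and satisfies
$$-\partial_t I_{t_0}^{1-\alpha}(\eta v)+D_i\bigl(a^{ij}(x_1)D_j(\eta v)\bigr)=F$$
weakly in $(t_0,T)\times B_R$. Since $a^{ij}(x_1)$ is smooth with bounded derivatives, expanding the divergence yields the non-divergence form equation claimed in (1). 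To upgrade $\eta v$ to $\bH_{p_0,0}^{\alpha,2}$ locally, I would pick $r_1<r'<R$ and a spatial cutoff $\zeta\in C_0^\infty(B_{r'})$ with $\zeta\equiv 1$ on $B_{r_1}$, extend $\zeta\eta v$ by zero to $\bR^d$, and record the resulting non-divergence equation on $(t_0,T)\times\bR^d$; its right-hand side lies in $L_{p_0}$ (it consists of $\zeta F$ plus first-order commutator terms coming from $\zeta$). Theorem 2.4 of \cite{arXiv:1806.02635} then produces an $\bH_{p_0,0}^{\alpha,2}((t_0,T)\times\bR^d)$-solution to this equation; since the same equation in divergence form has a unique solution in $\cH_{p_0,0}^{\alpha,1}$ by the standing hypothesis that Theorem \ref{thm0412_1} holds at $p_0$, this solution must coincide with $\zeta\eta v$, giving $\eta v\in\bH_{p_0,0}^{\alpha,2}((t_0,T)\times B_{r_1})$.

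Part (2) then follows immediately by Remark \ref{rem0910_1}: the non-divergence equation in (1) holds a.e., and replacing $\partial_t I_{t_0}^{1-\alpha}(\eta v)+F$ by $\eta\,\partial_t I_S^{1-\alpha}v$ via Lemma \ref{lem0207_1} yields \eqref{eq0820_08}. Part (3) is obtained by applying Lemma \ref{lem0820_1}(2) on $(t_0,T)\times B_{r_1}$ with the choice $\tilde b^j:=\sum_i D_i a^{ij}(x_1)$ (bounded and smooth) and $c=0$, shrinking the ball to $B_{r_0}$.

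For part (4), set $u:=\eta\,\partial_t I_S^{1-\alpha}v$. By (2), $u=D_i(a^{ij}D_j(\eta v))$ a.e. Since $a^{ij}(x_1)$ does not depend on $t$ and, by (3), $\eta v$ and $D_x(\eta v)$ both belong to $\bH_{p_0,0}^{\alpha,2}$ on appropriate balls, $\partial_t I_{t_0}^{1-\alpha}$ commutes with $D_i$ and with multiplication by $a^{ij}(x_1)$. Hence
$$\partial_t I_{t_0}^{1-\alpha}u=D_i\bigl(a^{ij}D_j\,\partial_t I_{t_0}^{1-\alpha}(\eta v)\bigr).$$
Substituting $\partial_t I_{t_0}^{1-\alpha}(\eta v)=u-F$ and rearranging gives the divergence form equation
$$-\partial_t I_{t_0}^{1-\alpha}u+D_i(a^{ij}D_j u)=\pm D_i(a^{ij}D_j F),$$
and differentiating the integral defining $F$ in $x$ identifies $a^{ij}D_j F$ with $G_i$ as in \eqref{eq0824_01}, up to the sign bookkeeping between $\eta(s)-\eta(t)$ and $\eta(t)-\eta(s)$.

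The main obstacle will be the regularity upgrade in part (1) from $\cH_{p_0,0}^{\alpha,1}$ to $\bH_{p_0,0}^{\alpha,2}$: every subsequent assertion leans on this lifting, which requires marrying the non-divergence solvability theory of \cite{arXiv:1806.02635} with the divergence form uniqueness of Theorem \ref{thm0412_1}, and carefully controlling the commutator terms produced by the spatial cutoff so that the effective right-hand side remains in $L_{p_0}$.
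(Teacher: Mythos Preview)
Your proposal is correct and follows essentially the same route as the paper's proof: introduce a spatial cutoff, solve the corresponding non-divergence problem globally via Theorem~2.4 of \cite{arXiv:1806.02635}, and identify that solution with the cutoff of $\eta v$ by the $\cH_{p_0,0}^{\alpha,1}$-uniqueness from Theorem~\ref{thm0412_1}; parts~(2)--(3) then come from Lemma~\ref{lem0820_1} with $b^j=\sum_i D_i a^{ij}$, $c=0$, exactly as you indicate. For part~(4) your direct commutation argument (apply $\partial_t I_{t_0}^{1-\alpha}$ to \eqref{eq0820_08} and substitute $\partial_t I_{t_0}^{1-\alpha}(\eta v)=u-F$) is just an unpacking of the identity \eqref{eq0820_06} that the paper cites from Lemma~\ref{lem0820_1}, and your sign remark is apt since $a^{ij}D_jF=-G_i$.
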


\begin{proof}
As in the proof of Lemma \ref{lem0820_1}, we assume that $t_0 = 0$.
Let $\psi(x) \in C_0^\infty(\bR^d)$ be such that $0 \leq \psi(x) \leq 1$,
$$
\psi(x) = 1 \quad \text{on} \quad B_{r_1},
\quad \psi(x) = 0 \quad \text{outside} \quad B_R.
$$
Set $\zeta(t,x) = \eta(t) \psi(x)$.
By using Theorem 2.4 in \cite{arXiv:1806.02635}, find
$u \in \bH_{p_0,0}^{\alpha,2}(\bR^d_T)$ satisfying
$$
-\partial_t I_0^{1-\alpha} u + a^{ij}(x_1) D_{ij} u + D_i a^{ij}(x_1) D_j u = F_1 + F_2
$$
in $\bR^d_T$,
where $F_1$ and $F_2$ are defined by
$$
F_1(t,x) = \frac{\alpha}{\Gamma(1-\alpha)} \int_S^t (t-s)^{-\alpha-1} \left( \eta(s) - \eta(t) \right) \psi(x) v(s,x) \, ds
$$
and
$$
F_2(t,x) = D_i a^{ij}(x_1) v D_j \zeta + a^{ij}(x_1) D_i v D_j \zeta + a^{ij}(x_1) v D_{ij} \zeta + a^{ij}(x_1) D_i \zeta D_j v.
$$
Note that $F_1, F_2  \in L_{p_0}(\bR^d_T)$ and $u \in \cH_{p_0,0}^{\alpha,1}(\bR^d_T) \subset \bH_{p_0,0}^{\alpha,2}(\bR^d_T)$ also satisfies the divergence form equation
$$
-\partial_t I_0^{1-\alpha}u + D_i\left(a^{ij}(x_1)D_j u \right) = D_i \left(a^{ij}(x_1) v D_j \zeta\right) + a^{ij}(x_1) D_i \zeta D_j v + F_1
$$
in $\bR^d_T$.
In particular, since $u \in \bH_{p_0,0}^{\alpha,2}(\bR^d_T)$, we have
$$
- \int_{\bR^d_T} \partial_t I_0^{1-\alpha} u \, \varphi \, dx \, dt = \int_{\bR^d_T} I_0^{1-\alpha} u \, \partial_t \varphi \, dx \, dt
$$
for $\varphi \in C_0^\infty\left([0,T) \times \bR^d \right)$.

On the other hand, we see that $v \zeta$ belongs to $\cH_{p_0,0}^{\alpha,1}(\bR^d_T)$ and satisfies
\begin{equation}
							\label{eq0814_01}
-\partial_t I_0^{1-\alpha}(v \zeta) + D_i\left(a^{ij}(x_1)D_j (v \zeta) \right) = D_i \left(a^{ij}(x_1) v D_j \zeta\right) + a^{ij}(x_1) D_i \zeta D_j v + F_1
\end{equation}
in $\bR^d_T$.
Indeed, using Lemma \ref{lem0207_1}, that is,
$$
\partial_t I_0^{1-\alpha} (v \zeta) = \zeta \partial_t I_S^{1-\alpha} v - F_1,
$$
we check \eqref{eq0814_01} as follows.
For $\varphi \in C_0^\infty\left([0,T) \times \bR^d\right)$,
$$
\int_{\bR_T} I_0^{1-\alpha} (v \zeta) \, \varphi_t \, dx \, dt - \int_{\bR_T} a^{ij} D_j(v \zeta) D_i \varphi \, dx \, dt
$$
$$
= \int_0^T\int_{B_R} I_0^{1-\alpha} (v \zeta) \, \varphi_t \, dx \, dt - \int_0^T\int_{B_R} a^{ij} D_j(v \zeta) D_i \varphi \, dx \, dt
$$
$$
= \int_S^T \int_{B_R} I_S^{1-\alpha} v \, \partial_t(\zeta \varphi) \, dx \, dt + \int_0^T \int_{B_R} F_1 \varphi \, dx \, dt - \int_S^T \int_{B_R} a^{ij} D_j(v \zeta) D_i \varphi \, dx \, dt
$$
$$
=\int_S^T \int_{B_R} I_S^{1-\alpha} v \, \partial_t(\zeta \varphi) \, dx \, dt -  \int_S^T \int_{B_R} a^{ij}D_j v D_i (\zeta \varphi) \, dx \, dt
$$
$$
- \int_S^T\int_{B_R} a^{ij} v D_j \zeta D_i \varphi \, dx \, dt + \int_S^T \int_{B_R} a^{ij} D_j v (D_i \zeta) \varphi \, dx \, dt +  \int_0^T \int_{B_R} F_1 \varphi \, dx \, dt
$$
$$
= \int_S^T \int_{B_R} \left(f (\zeta\varphi) - g \cdot \nabla (\zeta \varphi) \right) \, dx \, dt - \int_S^T\int_{B_R} a^{ij} v D_j \zeta D_i \varphi \, dx \, dt
$$
$$
+ \int_S^T \int_{B_R} a^{ij} D_j v (D_i \zeta) \varphi \, dx \, dt + \int_0^T \int_{B_R} F_1 \varphi \, dx \, dt
$$

$$
= - \int_{\bR^d_T} a^{ij} v D_j \zeta D_i \varphi \, dx \, dt
+ \int_{\bR^d_T} a^{ij} D_j v (D_i \zeta) \varphi \, dx \, dt + \int_{\bR^d_T} F_1 \varphi \, dx \, dt,
$$
where we used the fact that $g_i = f = 0$ on $(0,T) \times B_R$.
Then from the uniqueness in Theorem \ref{thm0412_1}, we obtain that
$$
v(t,x) \zeta(t,x) = v(t,x) \eta(t) \psi(x) = u(t,x) \in \bH_{p_0,0}^{\alpha,2}(\bR^d_T)
$$
and $v \zeta$ satisfies the non-divergence form equation
$$
- \partial_t I_0^{1-\alpha} (v \zeta) + a^{ij}(x_1) D_{ij}(v \zeta) + D_ia^{ij}(x_1) D_j (v \zeta) = F_1 + F_2
$$
in $\bR^d_T$.
In particular, $F_2 \equiv 0$ on $(0,T) \times B_{r_1}$, $v \in \cH_{p_0,0}^{\alpha,1} \left( (S,T) \times B_{r_1} \right)$, $\eta v = v \zeta \in \bH_{p_0,0}^{\alpha,2}\left((0,T) \times B_{r_1} \right)$, and
$$
-\partial_t I_0^{1-\alpha} (\eta v) + a^{ij}(x_1) D_{ij}(\eta v) + D_i a^{ij}(x_1) D_j (\eta v) = F_1 = F
$$
in $(0,T) \times B_{r_1}$.
Hence, by Lemma \ref{lem0820_1} with $b^j = D_i a^{ij}$, $c = 0$, $R=r_1$, and $r = r_0$, we have $\eta(t) \partial_t I_S^{1-\alpha} v \in   L_{p_0}\left((0,T) \times B_{r_1}\right)$ and \eqref{eq0820_08} is satisfied.
Moreover,
$$
D_x(\eta v) \in \bH_{p_0,0}^{\alpha,2}\left((0,T) \times B_{r_0}\right), \quad \eta(t) \partial_t I_S^{1-\alpha} v \in \cH_{p_0, 0}^{\alpha,1}\left((0,T)\times B_{r_0}\right).
$$
To check \eqref{eq0820_09}, using \eqref{eq0820_06} we obtain that
$$
- \partial_t I_0^{1-\alpha}\left(\eta(t) \partial_t I_S^{1-\alpha} v\right) + D_i\left(a^{ij}(x_1)D_j \left(\eta(t) \partial_t I_S^{1-\alpha} v\right) \right)
$$
$$
= D_i \left[ a^{ij}(x_1)D_j \left(\eta(t) \partial_t I_S^{1-\alpha} v\right) - a^{ij}(x_1) \partial_t I_0^{1-\alpha} D_j(\eta v)
\right]
$$
in $(0,T) \times B_{r_0}$.
It only remains to notice that
$$
a^{ij}(x_1)D_j \left(\eta(t) \partial_t I_S^{1-\alpha} v\right) - a^{ij}(x_1) \partial_t I_0^{1-\alpha} D_j(\eta v) = G_i.
$$
Indeed, using the fact that
$$
\eta v \in \bH_{p_0,0}^{\alpha,2}\left((0,T) \times B_{r_1}\right)\quad
\text{and}\quad
D(\eta v) \in \bH_{p_0,0}^{\alpha,2}\left((0,T) \times B_{r_0}\right),
$$
we obtain that
$$
\partial_t I_0^{1-\alpha} \left(D_j(\eta v)\right) = D_j \left( \partial_t I_0^{1-\alpha} (\eta v) \right)
$$
in $(0,T) \times B_{r_0}$.
Then using Lemma \ref{lem0207_1} and the fact that $v \in \cH_{p_0,0}^{\alpha,1}\left((S,T) \times B_R\right)$,
\begin{multline}
							\label{eq0823_08}
D_j\left(\eta(t) \partial_t I_S^{1-\alpha} v\right) - \partial_t I_0^{1-\alpha} D_j(\eta v) = D_j \left[ \eta(t) \partial_t I_S^{1-\alpha} v - \partial_t I_0^{1-\alpha} (\eta v) \right]
\\
= \frac{\alpha}{\Gamma(1-\alpha)} D_j \int_S^t (t-s)^{-\alpha-1} \left(\eta(s) - \eta(t)\right) v(s,x) \, ds
\\
= \frac{\alpha}{\Gamma(1-\alpha)} \int_S^t (t-s)^{-\alpha-1} \left(\eta(s) - \eta(t)\right) D_j v(s,x) \, ds.
\end{multline}
The lemma is proved.
\end{proof}

\begin{lemma}
							\label{lem0823_1}
Let $p_0\in (1,\infty)$, $\alpha \in (0,1)$, $-\infty < S \leq t_0 < T < \infty$, $0 < R < \infty$,
and $a^{ij} = a^{ij}(x_1)$ be infinitely differentiable with bounded derivatives.
Suppose that Theorem \ref{thm0412_1} holds with this $p_0$ and $v \in \cH_{p_0,0}^{\alpha,1}\left((S,T) \times B_R\right)$ satisfies
$$
-\partial_t I_S^{1-\alpha} v + D_i \left(a^{ij}(x_1) D_j v \right) = D_i g_i + f
$$
in $(S,T) \times B_R$, where $g_i, f \in L_{p_0}\left((S,T) \times B_R\right)$ and $g_i = f = 0$ on $(t_0,T) \times B_R$.
Then, for any $r \in (0,R)$ and any infinitely differentiable function $\eta(t)$ defined on $\bR$ such that $\eta(t) = 0$ for $t \leq t_0$,
we have
\begin{multline}
							\label{eq0823_04}
\left\|\eta(t) \partial_t I_S^{1-\alpha} v\right\|_{L_{p_0}((t_0,T) \times B_r)} \leq \frac{N}{R-r} \|D(\eta v)\|_{L_{p_0}((t_0,T) \times B_R)}
\\
+ N(R-r) \|G\|_{L_{p_0}((t_0,T) \times B_R)},
\end{multline}
\begin{multline}
							\label{eq0823_05}
\left\|D\left(\eta(t) \partial_t I_S^{1-\alpha} v\right) \right\|_{L_{p_0}((t_0,T) \times B_r)} \leq \frac{N}{(R-r)^2} \|D(\eta v)\|_{L_{p_0}((t_0,T) \times B_R)}
\\
+ N\|G\|_{L_{p_0}((t_0,T) \times B_R)},
\end{multline}
where $N = N(d,\delta,\alpha,p_0)$ and
$G = (G_1, \ldots, G_d)$ is defined as in \eqref{eq0824_01}.
\end{lemma}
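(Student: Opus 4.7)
The plan is to exploit both structural facts about $w := \eta(t)\,\partial_t I_S^{1-\alpha} v$ furnished by Lemma \ref{lem0820_2}: the pointwise identity $w = D_i(a^{ij}(x_1) D_j(\eta v))$ from part~(2) and the divergence-form equation
$$
-\partial_t I_{t_0}^{1-\alpha} w + D_i\bigl(a^{ij}(x_1)\, D_j w\bigr) = D_i G_i
$$
on $(t_0,T) \times B_{r_0}$ from part~(4), with $w \in \cH_{p_0,0}^{\alpha,1}((t_0,T) \times B_{r_0})$. I will choose $r_0,r_1$ arbitrarily close to $R$, so both facts are available on balls of essentially radius $R$.

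The main task is \eqref{eq0823_04}; once that is established, \eqref{eq0823_05} follows by applying Lemma \ref{lem0731_1} (with $\lambda = 0$, $f = 0$) to the divergence-form equation for $w$ at the pair of radii $(r,(R+r)/2)$, giving $\|Dw\|_{L_{p_0}(B_r)} \leq \tfrac{N}{R-r}\|w\|_{L_{p_0}(B_{(R+r)/2})} + N\|G\|_{L_{p_0}(B_R)}$, and then substituting \eqref{eq0823_04} applied at the pair $((R+r)/2, R)$ to bound the norm of $w$ on $B_{(R+r)/2}$.

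For \eqref{eq0823_04} I run a Caccioppoli-type iteration. Fix $r \leq \tau < s < s'' < R$ with $s-\tau$ and $s''-s$ comparable to $s''-\tau$, and let $\zeta \in C_0^\infty(B_s)$ satisfy $\zeta \equiv 1$ on $B_\tau$, $0 \leq \zeta \leq 1$, and $|D\zeta| \leq N(d)/(s-\tau)$. Testing the identity $w = D_i(a^{ij}D_j(\eta v))$ against $|w|^{p_0-2} w \zeta^{p_0}$ (with a standard regularization when $p_0 \in (1,2)$ to avoid the singularity of $|w|^{p_0-2}$ at $w=0$) and integrating by parts in the spatial variable produces two terms. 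The term carrying $D_i\zeta$ is bounded by H\"older's inequality and Young's inequality by $\epsilon \|w\zeta\|_{L_{p_0}}^{p_0} + C_\epsilon (s-\tau)^{-p_0} \|D(\eta v)\|_{L_{p_0}(B_R)}^{p_0}$. The term carrying $D_i w$ is estimated by three-fold H\"older with exponents $(p_0,p_0,p_0/(p_0-2))$ by $N\|D(\eta v)\|_{L_{p_0}(B_R)}\|Dw\|_{L_{p_0}(B_s)}\|w\|_{L_{p_0}(B_{s''})}^{p_0-2}$; bounding $\|Dw\|_{L_{p_0}(B_s)}$ by Lemma \ref{lem0731_1} applied at the pair $(s,s'')$ to the equation for $w$, and splitting the resulting $\|D(\eta v)\|\cdot\|G\|$ cross term via Young's inequality with parameter $\sim (R-r)^{-p_0}$ into $(R-r)^{-p_0}\|D(\eta v)\|^{p_0}$ and $(R-r)^{p_0}\|G\|^{p_0}$, and finally absorbing the small multiples of $\|w\zeta\|_{L_{p_0}}^{p_0}$ on the left, I obtain
$$
\|w\|_{L_{p_0}(B_\tau)}^{p_0} \leq \tfrac{1}{2} \|w\|_{L_{p_0}(B_{s''})}^{p_0} + \frac{N\|D(\eta v)\|_{L_{p_0}(B_R)}^{p_0}}{(s''-\tau)^{p_0}} + \frac{N\|D(\eta v)\|_{L_{p_0}(B_R)}^{p_0}}{(R-r)^{p_0}} + N(R-r)^{p_0}\|G\|_{L_{p_0}(B_R)}^{p_0},
$$
valid for all $r \leq \tau < s'' < R$. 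A standard iteration lemma applied to the nondecreasing function $\rho \mapsto \|w\|_{L_{p_0}(B_\rho)}^{p_0}$ on $[r,R]$ then yields \eqref{eq0823_04}.

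The main obstacle is the Caccioppoli inequality itself: the identity $w = D_i(a^{ij}D_j(\eta v))$ formally relates $w$ to two spatial derivatives of $\eta v$, while \eqref{eq0823_04} controls $w$ by only the first derivative. Closing this derivative gap requires simultaneously using the cutoff $\zeta$ (which contributes the factor $(R-r)^{-1}$ via $D\zeta$) and the divergence-form equation for $w$ (which, via Lemma \ref{lem0731_1}, converts $\|Dw\|$ on a smaller ball into $\|w\|$ on a larger ball), with careful bookkeeping of $(R-r)$-powers in the Young's inequalities so that the iteration closes and produces the announced split of the right-hand side between $\|D(\eta v)\|$ and $\|G\|$.
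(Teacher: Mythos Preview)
Your proposal is correct and follows essentially the same route as the paper's proof. Both arguments test the pointwise identity $w = D_i(a^{ij}D_j(\eta v))$ against $|w|^{p_0-2}w\zeta^{p_0}$, both invoke Lemma~\ref{lem0731_1} on the divergence-form equation for $w$ to trade $\|Dw\|$ on a smaller ball for $\|w\|$ on a larger one, and both then iterate to close. The only genuine difference is in the packaging of the iteration: the paper runs an explicit dyadic scheme with radii $r_k = r + (R_0-r)(1-2^{-k})$, chooses the Young parameters $\varepsilon_k$ by hand, multiplies by $2^{-2k}$, and sums the resulting telescoping series; you instead appeal to the standard Giaquinta-type iteration lemma on $\rho \mapsto \|w\|_{L_{p_0}(B_\rho)}^{p_0}$. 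These are equivalent devices, and your derivation of \eqref{eq0823_05} from \eqref{eq0823_04} via Lemma~\ref{lem0731_1} at the intermediate radius $(R+r)/2$ is exactly what the paper does. One small remark: your three-fold H\"older with exponents $(p_0,p_0,p_0/(p_0-2))$ is only directly valid for $p_0 \ge 2$; the regularization you mention handles the singularity of $|w|^{p_0-2}$ but not the negative H\"older exponent. The paper's pointwise Young split has the same restriction, and in the paper the lemma is only ever invoked with $p_0 \ge 2$ (the case $p_0<2$ of Theorem~\ref{thm0412_1} being handled by duality), so this is not a discrepancy between your argument and theirs.
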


\begin{proof}
We again assume that $t_0 = 0$.
Let $\tau_0, \tau_1 \in (0,R)$ such that $\tau_0 < \tau_1$.
By Lemma \ref{lem0820_2} the function $\eta(t) \partial_t I_S^{1-\alpha}v$ belongs to $\cH_{p_0,0}^{\alpha,1}\left((0,T) \times B_{\tau
_1}\right)$ and satisfies the divergence form equation
$$
- \partial_t I_0^{1-\alpha} \left(\eta(t) \partial_t I_S^{1-\alpha}v\right) + D_i \left(a^{ij}(x_1) D_j \left(\eta(t)\partial_t I_S^{1-\alpha} v\right) \right) = D_i G_i
$$
in $(0,T) \times B_{\tau_1}$, where $G_i$ is given in \eqref{eq0824_01}.
Since Theorem \ref{thm0412_1} holds with $p_0$, by Lemma \ref{lem0731_1} with $\lambda = 0$, we obtain that
\begin{multline}
							\label{eq0823_01}
\|D( \eta(t) \partial_t I_S^{1-\alpha} v )\|_{L_{p_0}((0,T) \times B_{\tau_0})}
\\
\leq \frac{N}{\tau_1-\tau_0} \|\eta(t) \partial_t I_S^{1-\alpha} v\|_{L_{p_0}((0,T) \times B_{\tau_1})}
+ N \|G_i\|_{L_{p_0}((0,T) \times B_{\tau_1})},
\end{multline}
where $N = N(d,\delta,\alpha,p_0)$.

We now estimate $\eta(t) \partial_t I_S^{1-\alpha} v$.
Let $r, R_0, R_1, R_2 \in (0,\infty)$ such that $r < R_0 < R_1 < R_2 < R$.
By Lemma \ref{lem0820_2} we have
\begin{equation*}
D (\eta v)\in \bH_{p_0,0}^{\alpha,2} \left((0,T) \times B_{R_1}\right),
\quad
\eta(t) \partial_t I_S^{1-\alpha} v \in \cH_{p_0,0}^{\alpha,1}\left((0,T) \times B_{R_1}\right),
\end{equation*}
and
\begin{equation}
							\label{eq0820_10}
\eta(t)\partial_t I_S^{1-\alpha} v = a^{ij}(x_1) D_{ij}(\eta v) + D_ia^{ij}(x_1) D_j(\eta v) = D_i\left(a^{ij} D_j(\eta v)\right)
\end{equation}
a.e. in $(0,T) \times B_{R_2}$.
Let $\varphi_k(x) \in C_0^\infty(\bR^d)$ such that $0 \leq \varphi_k(x) \leq 1$,
$$
\varphi_k(x) = 1 \quad \text{on} \quad B_{r_k},\quad
\varphi_k(x) = 0 \quad
\text{outside} \quad B_{r_{k+1}},
\quad |D \varphi_k(x)| \leq N(d) \frac{2^k}{R_0 - r},
$$
where
$$
r_0 = r, \quad
r_k = r+(R_0-r) (1-2^{-k}), \quad k = 1, 2, \ldots.
$$
Then the function $\zeta_k(t,x) := \eta(t) \varphi_k(x)$ satisfies $0 \leq \zeta_k(t,x) \leq 1$ and
$$
\zeta_k(t,x) = 0 \quad \text{on} \quad (S,T) \times \bR^d \setminus (0,T) \times B_{r_{k+1}}.
$$
Denote
$$
B_T^k = (0,T) \times B_{r_k}, \quad B_T^\infty = (0,T) \times B_{R_0}.
$$
Since
$$
w:=\eta(t) \partial_t I_S^{1-\alpha} v \in \cH_{p_0,0}^{\alpha,1}\left((0,T) \times B_{R_1}\right),
$$
it follows that
$$
w |w|^{p_0-2} \varphi_k^{p_0} \in L_{q_0}\left((0,T) \times \bR^d\right)
$$
and
$$
D_i\left( w |w|^{p_0-2} \varphi_k^{p_0} \right) \in L_{q_0}\left((0,T) \times \bR^d\right),
$$
where $q_0=p_0/(p_0-1)$.
In particular,
$$
D_i\left( w |w|^{p_0-2} \varphi_k^{p_0} \right)
= (p_0-1) |w|^{p_0-2} (D_i w) \varphi_k^{p_0}
+ p_0 w |w|^{p_0-2} \varphi_k^{p_0-1} D_i \varphi_k.
$$
By multiplying both sides of the equation \eqref{eq0820_10} by $w |w|^{p_0-2} \varphi_k^{p_0}$
and integrating by parts over $(0,T) \times \bR^d$, we have
\begin{multline}
							\label{eq0821_01}
\int_{\bR^d_T} |w|^{p_0} \varphi_k^{p_0} \, dx \, dt = - \int_{\bR^d_T} a^{ij} D_j (\eta v) D_i \left(w|w|^{p_0-2} \varphi_k^{p_0}\right) \, dx \, dt
\\
= - (p_0-1) \int_{\bR^d_T} a^{ij}D_j(\eta v) |w|^{p_0-2} D_i w \, \varphi_k^{p_0} \, dx \, dt
\\
- p_0 \int_{\bR^d_T} a^{ij} D_j(\eta v) w|w|^{p_0-2} \varphi_k^{p_0-1} D_i \varphi_k \, dx \, dt =: J_1 + J_2.
\end{multline}
Note that, for any $\varepsilon_0, \varepsilon_k \in (0,\infty)$, $k=1,2,\ldots$,
$$
|J_1| \leq \varepsilon_0 \int_{\bR^d_T} |w|^{p_0} \varphi_k^{p_0} \, dx \, dt + N \int_{\bR^d_T} |D(\eta v)|^{p_0/2} |Dw|^{p_0/2} \varphi_k^{p_0} \, dx \, dt
$$
$$
\leq \varepsilon_0 \int_{\bR^d_T} |w|^{p_0} \varphi_k^{p_0} \, dx \, dt + \varepsilon_k^{p_0} \int_{\bR^d_T} |Dw|^{p_0} \varphi_k^{p_0} \, dx \, dt
$$
$$
+ N \varepsilon_k^{-p_0} \int_{\bR^d_T} |D(\eta v)|^{p_0} \varphi_k^{p_0} \, dx \, dt,
$$
where $N = N(d,\delta,\varepsilon_0, p_0)$, and
$$
|J_2| \leq \varepsilon_0 \int_{\bR^d_T} |w|^{p_0} \varphi_k^{p_0} \, dx \, dt + N \int_{\bR^d_T} |D(\eta v) D \varphi_k|^{p_0} \, dx \, dt,
$$
where $N = N(d,\delta,\varepsilon_0, p_0)$.
From these inequalities and \eqref{eq0821_01}, and choosing an appropriate $\varepsilon_0 > 0$, we obtain that
$$
\int_{\bR^d_T} |w|^{p_0} \varphi_k^{p_0} \, dx \, dt
$$
$$
\leq N \varepsilon_k^{-p_0} \int_{\bR^d_T} |D(\eta v)|^{p_0}\varphi_k^{p_0} \, dx \, dt + N \int_{\bR^d_T} |D(\eta v)|^{p_0}|D\varphi_k|^{p_0}  \, dx \, dt
$$
$$
+ \varepsilon_k^{p_0} \int_{\bR^d_T} |Dw|^{p_0} \varphi_k^{p_0} \, dx \, dt,
$$
where $N = N(d,\delta,p_0)$.
This shows that
\begin{multline}
							\label{eq0821_02}
\left\|\eta(t) \partial_t I_S^{1-\alpha} v\right\|_{L_{p_0}(B^k_T)} \leq N \left(\frac{2^k}{R_0-r} + \varepsilon_k^{-1}\right) \left\|D(\eta v)\right\|_{L_{p_0}(B^{k+1}_T)}
\\
+ \varepsilon_k \left\| D(\eta(t)\partial_t I_S^{1-\alpha} v)\right\|_{L_{p_0}(B^{k+1}_T)}.
\end{multline}
From the inequality \eqref{eq0823_01} with $\tau_0 = r_{k+1}$ and $\tau_1 = r_{k+2}$, we obtain that
\begin{align*}
&\left\|D( \eta(t) \partial_t I_S^{1-\alpha} v )\right\|_{L_{p_0}(B^{k+1}_T)}\\
&\leq N \frac{2^k}{R_0-r} \left\|\eta(t) \partial_t I_S^{1-\alpha} v\right\|_{L_{p_0}(B^{k+2}_T)} + N \|G_i\|_{L_{p_0}(B^{k+2}_T)},
\end{align*}
where $N = N(d,\delta,\alpha,p_0)$.
Combining this with \eqref{eq0821_02}, we have
\begin{multline}
							\label{eq0823_02}
\left\|\eta(t) \partial_t I_S^{1-\alpha} v\right\|_{L_{p_0}(B^k_T)} \leq N_0 \left(\frac{2^k}{R_0-r} + \varepsilon_k^{-1}\right) \left\|D(\eta v)\right\|_{L_{p_0}(B^{k+1}_T)}
\\
+ N_0 \frac{2^k \varepsilon_k}{R_0-r} \left\| \eta(t) \partial_t I_S^{1-\alpha} v\right\|_{L_{p_0}(B^{k+2}_T)} + N_0 \varepsilon_k \|G_i\|_{L_{p_0}(B^{k+2}_T)},
\end{multline}
where $N_0 = N_0(d,\delta,\alpha,p_0)$.
Choose
$$
\varepsilon_k = \frac{R_0-r}{N_0} 2^{-k - 4}
$$
and multiply both side of the inequality \eqref{eq0823_02} by $2^{-2k}$ so that we have
$$
2^{-2k}\left\|\eta(t) \partial_t I_S^{1-\alpha} v\right\|_{L_{p_0}(B^k_T)} \leq (N_0 + 2^4 N_0^2) \frac{2^{-k}}{R_0-r} \left\|D(\eta v)\right\|_{L_{p_0}(B^{k+1}_T)}
$$
$$
+ 2^{-2(k+2)} \left\| \eta(t) \partial_t I_S^{1-\alpha} v\right\|_{L_{p_0}(B^{k+2}_T)} + (R_0-r) 2^{-k-4} \|G_i\|_{L_{p_0}(B^{k+2}_T)}.
$$
By making summations with respect to $k=0,1,2,\ldots$, it follows that
$$
\left\|\eta(t) \partial_t I_S^{1-\alpha} v\right\|_{L_{p_0}(B^0_T)} + 2^{-2}\left\|\eta(t) \partial_t I_S^{1-\alpha} v\right\|_{L_{p_0}(B^1_T)} + \sum_{k=2}^\infty 2^{-2k}\left\|\eta(t) \partial_t I_S^{1-\alpha} v\right\|_{L_{p_0}(B^k_T)}
$$
$$
\leq \frac{N}{R_0-r} \|D(\eta v)\|_{L_{p_0}(B^\infty_T)} + \sum_{k=2}^\infty 2^{-2k}\left\|\eta(t) \partial_t I_S^{1-\alpha} v\right\|_{L_{p_0}(B^k_T)} + N(R_0-r) \|G_i\|_{L_{p_0}(B^\infty_T)},
$$
where we recall that $B_T^0 = (0,T) \times B_r$ and $B^\infty_T = (0,T) \times B_{R_0} \subset (0,T) \times B_R$.
Removing the same terms from both sides of the above inequality, we get
$$
\left\|\eta(t) \partial_t I_S^{1-\alpha} v\right\|_{L_{p_0}(B_T^0)} \leq \frac{N}{R_0-r} \|D(\eta v)\|_{L_{p_0}(B_T^\infty)} + N(R_0-r) \|G_i\|_{L_{p_0}(B_T^\infty)}.
$$
This implies the inequality \eqref{eq0823_04}.
To obtain \eqref{eq0823_05} we combine the inequality \eqref{eq0823_01} with $\tau_0 = r$ and $\tau_1 = (R+r)/2$ and the above inequality with $r = (R+r)/2$ and $R_0 = R$.
\end{proof}

\begin{lemma}
							\label{lem0823_2}
Let $p_0\in (1,\infty)$, $\alpha \in (0,1)$, $-\infty < S \leq t_0 < T < \infty$, $0 < R < \infty$,
and $a^{ij} = a^{ij}(x_1)$ be infinitely differentiable with bounded derivatives.
Suppose that Theorem \ref{thm0412_1} holds with this $p_0$ and $v \in \cH_{p_0,0}^{\alpha,1}\left((S,T) \times B_R\right)$ satisfies
$$
-\partial_t I_S^{1-\alpha} v + D_i \left(a^{ij}(x_1) D_j v \right)
= D_i g_i + f
$$
in $(S,T) \times B_R$, where $g_i, f \in L_{p_0}\left((S,T) \times B_R\right)$ and $g_i = f = 0$ on $(t_0,T) \times B_R$.
Then, for any $r \in (0,R)$ and any infinitely differentiable function $\eta(t)$ defined on $\bR$ such that $\eta(t) = 0$ for $t \leq t_0$, we have
\begin{equation}
							\label{eq0824_03}
\|D(\eta v)\|_{L_{p_1}((t_0,T) \times B_r)} \leq N \|D (\eta v)\|_{L_{p_0}((t_0,T) \times B_R)} + N\|G\|_{L_{p_0}((t_0,T) \times B_R)},
\end{equation}
where $p_1 \in (p_0,\infty]$ satisfies
\begin{equation}
							\label{eq1029_01}
p_1 \geq p_0 + \frac{\alpha}{\alpha d + 1 - \alpha}, \quad p_1 = \infty \quad \text{if} \quad p_0 > d + 1/\alpha,
\end{equation}
$G = (G_1,\ldots,G_d)$ is defined by
$$
G_\ell = \frac{\alpha}{\Gamma(1-\alpha)} \int_S^t (t-s)^{-\alpha-1}\left( \eta(t)-\eta(s) \right) D_\ell v(s,x) \, ds, \quad \ell = 1, \ldots,d,
$$
and $N=N(d,\delta,\alpha,p_0, r,R)$.
\end{lemma}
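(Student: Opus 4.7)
The strategy mirrors the approach of \cite{MR2800569} highlighted in the introduction: one applies the Sobolev-type embedding from the Appendix to two families of well-chosen quantities, namely the tangential derivatives $D_k(\eta v)$ for $k = 2,\ldots,d$ and the linear combination $U := a^{1j}(x_1) D_j(\eta v)$ in the normal direction. These are precisely the quantities for which one can establish $\cH^{\alpha,1}_{p_0,0}$-membership locally with norm bounds that do \emph{not} depend on the smoothness of $a^{ij}$, so that the resulting $L_{p_1}$ estimate on $D(\eta v)$ inherits this independence.

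\textbf{Setup and the tangential components.} Fix radii $r < r_3 < r_2 < r_1 < r_0 < R$. Lemma \ref{lem0820_2} places $\eta v$ and $D(\eta v)$ in $\bH^{\alpha,2}_{p_0,0}$ on suitable subsets of $(t_0,T) \times B_{r_0}$, gives the non-divergence identity $w := \eta(t)\partial_t I_S^{1-\alpha} v = D_i(a^{ij}(x_1) D_j(\eta v))$ a.e., and places $w \in \cH^{\alpha,1}_{p_0,0}$ locally with its own divergence form equation, while Lemma \ref{lem0823_1} controls $\|w\|_{L_{p_0}} + \|Dw\|_{L_{p_0}}$ on smaller sets by $\|D(\eta v)\|_{L_{p_0}}$ and $\|G\|_{L_{p_0}}$ on a larger one. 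For $k \geq 2$, since $a^{ij}$ is constant in $x_k$, differentiating the equation $\partial_t^\alpha(\eta v) = D_i(a^{ij} D_j(\eta v)) - F$ in $x_k$ (and noting $D_k F = -G_k$) yields
\begin{equation*}
-\partial_t^\alpha D_k(\eta v) + D_i\bigl(a^{ij}(x_1) D_j D_k(\eta v)\bigr) = -G_k.
\end{equation*}
Lemma \ref{lem0731_1} with $\lambda = 0$ and $f = -G_k$, $g_i = 0$ then puts $D_k(\eta v)$ in $\cH^{\alpha,1}_{p_0,0}$ locally, with the desired type of bound.

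\textbf{The normal combination $U$.} The crux is to show that $U$ also belongs to $\cH^{\alpha,1}_{p_0,0}$ locally with such a bound, \emph{without any dependence on derivatives of $a^{ij}$}. Trivially $U \in L_{p_0}$, and for $k \geq 2$ one has $D_k U = a^{1j}(x_1) D_j D_k(\eta v) \in L_{p_0}$ by the preceding step. For the normal direction, reading off the equation $w = D_i(a^{ij} D_j(\eta v)) = D_1 U + \sum_{i \geq 2, j} a^{ij}(x_1) D_{ij}(\eta v)$ gives
\begin{equation*}
D_1 U = w - \sum_{i \geq 2,\, j} a^{ij}(x_1)\, D_{ij}(\eta v) \in L_{p_0}.
\end{equation*}
Finally, using $\partial_t^\alpha(\eta v) = w - F$ and the $t$-independence of $a^{1j}$,
\begin{equation*}
\partial_t^\alpha U = a^{1j}(x_1)\, \partial_t^\alpha D_j(\eta v) = a^{1j}(x_1)\, D_j(w-F) = a^{1j}(x_1)\, D_j w + a^{1j}(x_1)\, G_j \in L_{p_0}.
\end{equation*}
The essential point is that all three expressions bypass $D_1 a^{ij}$, yielding a bound that is uniform in the smoothness of $a^{ij}$.

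\textbf{Conclusion and main obstacle.} After a spatial cutoff to compactify support, the Sobolev-type embedding from the Appendix applied to $D_k(\eta v)$ (for $k \geq 2$) and to $U$ provides $L_{p_1}$ bounds on $B_r$ for any $p_1$ satisfying \eqref{eq1029_01}. Since $a^{11}(x_1) \geq \delta > 0$, the algebraic identity
\begin{equation*}
D_1(\eta v) = \frac{1}{a^{11}(x_1)}\Bigl(U - \sum_{j \geq 2} a^{1j}(x_1) D_j(\eta v)\Bigr)
\end{equation*}
recovers $D_1(\eta v) \in L_{p_1}$ locally with the same type of bound, and combining with the tangential components yields \eqref{eq0824_03}. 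The main obstacle, as flagged in the paper's introduction, is precisely the $U$-step: a naive Leibniz expansion of $D_1 U$ or $\partial_t^\alpha U$ would produce $D_1 a^{ij}$ terms that spoil uniformity, and it is only by using the identities derived from the equation itself that one avoids them.
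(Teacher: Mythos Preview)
Your proposal is correct and follows essentially the same route as the paper: introduce $V := a^{1j}(x_1)\,D_j(\eta v)$ alongside the tangential derivatives $D_{x'}(\eta v)$, obtain local $\bH^{\alpha,1}_{p_0}$ bounds for both via Lemmas~\ref{lem0731_1} and~\ref{lem0823_1}, apply the Appendix embeddings, and recover $D_1(\eta v)$ algebraically from $V$ and $D_{x'}(\eta v)$. One small caveat: the Appendix embeddings are stated for $\bH^{\alpha,1}_{p_0,0}$ (i.e., with $\partial_t^\alpha$ in $L_{p_0}$), not the weaker $\cH^{\alpha,1}_{p_0,0}$ you name for $D_k(\eta v)$; but your own identity $\partial_t^\alpha D_j(\eta v) = D_j w + G_j$ together with the bound on $\|Dw\|_{L_{p_0}}$ from Lemma~\ref{lem0823_1} supplies exactly this, so the slip is purely notational.
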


\begin{proof}
As in the proofs above, set $t_0 = 0$.
Denote
$$
V = \sum_{j=1}^d a^{1j}(x_1) D_j (\eta v).
$$
We claim that
\begin{multline}
							\label{eq0823_06}
\|D_{x'}(\eta v)\|_{\bH_{p_0}^{\alpha,1}\left((0,T) \times B_r\right)} + \|V\|_{\bH_{p_0}^{\alpha,1}\left((0,T) \times B_r\right)}
\\
\leq N \|D(\eta v)\|_{L_{p_0}((0,T) \times B_R)} + N\|G\|_{L_{p_0}((0,T) \times B_R)},
\end{multline}
where $N=N(d,\delta,\alpha,p_0,r,R)$ and  $D_{x'}$ means one of $D_{x_k}$, $k = 2, \ldots,d$, or the whole set of $\{ D_{x_2}, \ldots, D_{x_d}\}$ depending on the context.
If \eqref{eq0823_06} holds, we take $p_1 = p_1(d,\alpha,p_0) \in (p_0, \infty]$ as follows.
If $p_0 \leq 1/\alpha$, take $p_1$ satisfying
$$
p_1 \in \left(p_0, \frac{1/\alpha +d}{1/(\alpha p_0) + d/p_0-1}\right) \quad \text{if} \quad p_0 \leq d,
$$
$$
p_1 \in (p_0, \alpha p_0^2 + p_0) \quad \text{if} \quad p_0 > d.
$$
If $p_0 > 1/\alpha$, take $p_1$ satisfying
$$
p_1 \in \left(p_0, p_0 + p_0^2/d\right) \quad \text{if} \quad p_0 \leq d,
$$
$$
p_1 \in (p_0, 2p_0) \quad \text{if} \quad p_0 > d \,\,\text{and}\,\, p_0 \leq d + 1/\alpha,
$$
$$
p_1 = \infty \quad \text{if} \quad p_0 > d + 1/\alpha.
$$
Then we see that \eqref{eq1029_01} is satisfied, and by the Sobolev inequalities (see Corollary \ref{cor1211_1}, Theorem \ref{thm1207_2}, Corollary \ref{cor0225_1}, Theorem \ref{thm0214_1}, and Corollary \ref{cor1029_1}), we have
$$
\|D_{x'}(\eta v)\|_{L_{p_1}\left((0,T) \times B_r\right)} \leq N \|D(\eta v)\|_{L_{p_0}((0,T) \times B_R)} + N\|G\|_{L_{p_0}((0,T) \times B_R)},
$$
$$
\|V\|_{L_{p_1}\left((0,T) \times B_r\right)} \leq N \|D(\eta v)\|_{L_{p_0}((0,T) \times B_R)} + N\|G\|_{L_{p_0}((0,T) \times B_R)},
$$
where $N = N(d,\delta,\alpha,p_0,r,R)$.
From these two inequalities and the relation
$$
D_1(\eta v) = \frac{1}{a^{11}(x_1)}\bigg( V - \sum_{j=2}^d a^{1j}(x_1) D_j(\eta v) \bigg),
$$
we obtain \eqref{eq0824_03} including the estimate for $D_1(\eta v)$.

To prove \eqref{eq0823_06}, it suffices to show that
\begin{multline}
							\label{eq0823_11}
\|D \left(D_{x'}(\eta v)\right)\|_{L_{p_0}\left((0,T) \times B_r\right)} + \|\partial_t I_0^{1-\alpha}\left(D(\eta v)\right)\|_{L_{p_0}\left((0,T) \times B_r\right)} + \|DV\|_{L_{p_0}\left((0,T) \times B_r\right)}
\\
\leq N \|D(\eta v)\|_{L_{p_0}((0,T) \times B_R)} + N\|G\|_{L_{p_0}((0,T) \times B_R)},
\end{multline}
where $N = N(d,\delta,\alpha,p_0,r,R)$.
Indeed, this inequality also includes the estimate for $\partial_t I_0^{1-\alpha}V$ because
$$
\partial_t I_0^{1-\alpha} V = \partial_t I_0^{1-\alpha} \sum_{j=1}^d a^{1j}(x_1) D_j(\eta v) = \sum_{j=1}^d a^{1j}(x_1) \partial_t I_0^{1-\alpha} \left(D_j(\eta v)\right).
$$

We now prove \eqref{eq0823_11}.
Let $R_0, R_1 \in (r,R)$ such that $R_0 < R_1$.
By Lemma \ref{lem0820_2} we have
\eqref{eq0820_10} a.e. in $(0,T) \times B_{R_1}$.
Differentiate both sides of \eqref{eq0820_10} with respect to $x_\ell$, $\ell=2,\ldots,d$, to get
\begin{equation}
							\label{eq0824_02}
D_\ell \left( \eta(t) \partial_t I_S^{1-\alpha} v \right) = D_i \left( a^{ij} D_j D_\ell(\eta v) \right)
\end{equation}
in $(0,T) \times B_{R_0}$.
This is possible because
$$
D (\eta v)\in \bH_{p_0,0}^{\alpha,2} \left((0,T) \times B_{R_0}\right),
\quad
\eta(t) \partial_t I_S^{1-\alpha} v \in \cH_{p,0}^{\alpha,1}\left((0,T) \times B_{R_0}\right).
$$
Then from \eqref{eq0824_02} and \eqref{eq0823_08}
\begin{equation}
							\label{eq0821_03}
- \partial_t I_0^{1-\alpha} \left( D_\ell(\eta v) \right) + D_i \left(a^{ij} D_j D_\ell(\eta v) \right) = G_\ell
\end{equation}
a.e. in $(0,T) \times B_{R_0}$.
Since the equation \eqref{eq0821_03} can be viewed as a divergence form equation so that $w:= D_\ell (\eta v)$ satisfies
$$
- \partial_t I_0^{1-\alpha} w + D_i \left(a^{ij} D_j w \right) = G_\ell
$$
in $(0,T) \times B_{R_0}$,
by Lemma \ref{lem0731_1} with $\lambda = 0$
\begin{multline}
							\label{eq0823_09}
\|D D_\ell(\eta v)\|_{L_{p_0}\left((0,T) \times B_r\right)} \leq \frac{N}{R_0-r}\|D_\ell(\eta v)\|_{L_{p_0}\left((0,T) \times B_{R_0}\right)}
\\
+ N (R_0-r) \|G\|_{L_{p_0}\left((0,T) \times B_{R_0}\right)}.
\end{multline}

We then estimate $\partial_t I_0^{1-\alpha} \left( D(\eta v) \right)$.
Note that as in \eqref{eq0823_08}
$$
\partial_t I_0^{1-\alpha} \left( D(\eta v) \right) = D \left( \eta(t) \partial_t I_S^{1-\alpha} v \right) + \partial_t I_0^{1-\alpha} \left(D(\eta v)\right) - D\left(\eta(t) \partial_t I_S^{1-\alpha} v \right)
$$
$$
= D \left( \eta(t) \partial_t I_S^{1-\alpha} v \right) + \frac{\alpha}{\Gamma(1-\alpha)} \int_S^t (t-s)^{-\alpha-1}\left( \eta(t)-\eta(s) \right) D v(s,x) \, ds
$$
in $(0,T) \times B_{R_0}$.
Then by \eqref{eq0823_05} in Lemma \ref{lem0823_1}
\begin{multline*}
\|\partial_t I_0^{1-\alpha} \left( D(\eta v) \right)\|_{L_{p_0}\left((0,T) \times B_r\right)} \leq \frac{N}{(R-r)^2} \|D(\eta v)\|_{L_{p_0}\left((0,T) \times B_R\right)}
\\
+ \|G\|_{L_{p_0}\left((0,T) \times B_R\right)}.
\end{multline*}

Finally, to estimate $DV$, from the relation \eqref{eq0820_10} and the fact that $a^{ij}(x_1)$ are independent of $x_j$, $j=2,\ldots, d$, we see that
$$
D_1 V = \eta(t) \partial_t I_S^{1-\alpha} v - \sum_{i=2}^d \sum_{j=1}^d a^{ij}(x_1) D_{ij}(\eta v)
$$
a.e. in $(0,T) \times B_{R_1}$.
That is,
$$
D_1 V = \eta(t) \partial_t I_S^{1-\alpha} v - a^{ij}(x_1) D \left(D_{x'}(\eta v)\right).
$$
Also see that
$$
D_{x'}V = a^{1j} D_{x'} \left(D(\eta v)\right).
$$
Then the estimate for $DV$ follows from \eqref{eq0823_09} and \eqref{eq0823_04} in Lemma \ref{lem0823_1}.
Therefore, \eqref{eq0823_11} is proved, and so is the lemma.
\end{proof}

\section{Level set arguments}
                        \label{sec5}

Recall that $Q_{R_1,R_2}(t,x) = (t-R_1^{2/\alpha}, t) \times B_{R_2}(x)$ and  $Q_R(t,x)=Q_{R,R}(t,x)$.
For $(t_0,x_0) \in \bR \times \bR^d$ and a function $g$ defined on $(-\infty,T) \times \bR^d$, we set
\begin{equation}
							\label{eq0406_03b}
\cM g(t_0,x_0) = \sup_{Q_{R}(t,x) \ni (t_0,x_0)} \dashint_{Q_{R}(t,x)}|g(s,y)| I_{(-\infty,T) \times \bR^d} \, dy \, ds
\end{equation}
and
\begin{equation}
							\label{eq0406_03}
\cS\cM g(t_0,x_0) = \sup_{Q_{R_1,R_2}(t,x) \ni (t_0,x_0)} \dashint_{Q_{R_1,R_2}(t,x)}|g(s,y)| I_{(-\infty,T) \times \bR^d} \, dy \, ds.
\end{equation}
The first one is called the (parabolic) maximal function of $g$, and second one the strong (parabolic) maximal function of $g$.
 Below we use the notation $(u)_{\cD}$ to denote the average of $u$ over $\cD$, where $\cD$ is a subset of $\bR^{d+1}$.

\begin{proposition}
							\label{prop0406_1}
Let $p_0\in (1,\infty)$, $\alpha \in (0,1)$, $T \in (0,\infty)$, and $a^{ij} = a^{ij}(x_1)$.
Assume that Theorem \ref{thm0412_1} holds with this $p_0$ and $u \in \cH_{p_0,0}^{\alpha,1}(\bR^d_T)$ satisfies
$$
-\partial_t I_0^{1-\alpha} u + D_i \left(a^{ij}(x_1) D_j u \right)
= D_i g_i
$$
in $(0,T) \times \bR^d$, where $g = (g_1,\ldots,g_d) \in L_p(\bR^d_T)$.
Then, for $(t_0,x_0) \in [0,T] \times \bR^d$ and $R \in (0,\infty)$,
there exist
$$
w \in \cH_{p_0,0}^{\alpha,1}((t_0-R^{2/\alpha}, t_0)\times \bR^d), \quad v \in \cH_{p_0,0}^{\alpha,1}((S, t_0)\times \bR^d),
$$
where $S := \min\{0, t_0-R^{2/\alpha}\}$,
such that $u = w + v$ in $Q_R(t_0,x_0)$,
\begin{equation}
                                    \label{eq8.13}
\left( |Dw|^{p_0} \right)_{Q_R(t_0,x_0)}^{1/p_0} \le N \left( |g_i|^{p_0} \right)_{Q_{2R}(t_0,x_0)}^{1/p_0},
\end{equation}
and
\begin{multline}
							\label{eq0411_01}
\left( |Dv|^{p_1} \right)_{Q_{R/2}(t_0,x_0)}^{1/p_1} \leq N \left( |g_i|^{p_0} \right)_{Q_{2R}(t_0,x_0)}^{1/p_0}
\\
+ N \sum_{k=0}^\infty 2^{-k\alpha} \left( \dashint_{\!t_0 - (2^{k+1}+1)R^{2/\alpha}}^{\,\,\,t_0} \dashint_{B_R(x_0)} |Du(s,y)|^{p_0} \, dy \, ds \right)^{1/p_0},
\end{multline}
where $p_1 = p_1(d, \alpha,p_0)\in (p_0,\infty]$ satisfies \eqref{eq1029_01} and $N=N(d,\delta, \alpha,p_0)$.
Here we understand that $u$ and $f$ are extended to be zero whenever $t < 0$
and
$$
\left( |Dv|^{p_1} \right)_{Q_{R/2}(t_0,x_0)}^{1/p_1} = \|Dv\|_{L_\infty(Q_{R/2}(t_0,x_0))},
$$
provided that $p_1 = \infty$.
\end{proposition}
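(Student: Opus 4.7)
The plan is to decompose $u = w + v$ so that $w$ absorbs the inhomogeneity $D_i g_i$ and $v$ solves a locally homogeneous equation, then estimate $w$ by Theorem \ref{thm0412_1} and $v$ by Lemma \ref{lem0823_2}. Set $\bar g_i := g_i \chi_{Q_{2R}(t_0,x_0)}$ and use Theorem \ref{thm0412_1} (valid for $p_0$ by assumption, with $\lambda = 0$ and $f = 0$) to produce $w \in \cH_{p_0,0}^{\alpha,1}((t_0 - R^{2/\alpha}, t_0) \times \bR^d)$ solving
$$
- \partial_t I_{t_0 - R^{2/\alpha}}^{1-\alpha} w + D_i \bigl(a^{ij}(x_1) D_j w\bigr) = D_i \bar g_i
$$
on this strip. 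The bound \eqref{eq0905_04} (which is independent of the length of the time strip) yields $\|Dw\|_{L_{p_0}} \le N \|g\|_{L_{p_0}(Q_{2R}(t_0,x_0))}$; normalizing by $|Q_R(t_0,x_0)|^{1/p_0}$ and comparing $|Q_{2R}|$ to $|Q_R|$ yields \eqref{eq8.13}. Extend $w$ by zero to $(S, t_0 - R^{2/\alpha})$ via Lemma \ref{lem0206_1}, and set $v := u - w \in \cH_{p_0,0}^{\alpha,1}((S,t_0)\times \bR^d)$. Since $\bar g_i = g_i$ on $Q_{2R}(t_0,x_0) \supset Q_R(t_0,x_0)$, $v$ satisfies the homogeneous equation $-\partial_t I_S^{1-\alpha} v + D_i(a^{ij} D_j v) = 0$ in $Q_R(t_0,x_0)$.

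Next, after reducing to smooth $a^{ij}(x_1)$ by a standard mollification in $x_1$ (the approximation passes to the limit thanks to the $L_{p_0}$-solvability in Theorem \ref{thm0412_1}, whose constants are independent of the smoothness), apply Lemma \ref{lem0823_2} to $v$ on $B_R(x_0)$, with $\tilde t_0 := t_0 - R^{2/\alpha}$ playing the role of $t_0$, $t_0$ the role of $T$, and a cutoff $\eta \in C^\infty(\bR)$ satisfying $\eta \equiv 0$ on $(-\infty, \tilde t_0]$, $\eta \equiv 1$ on $[t_0 - (R/2)^{2/\alpha}, \infty)$, and $|\eta'| \le N R^{-2/\alpha}$. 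A parabolic rescaling $(t,x) \mapsto (R^{2/\alpha}t, Rx)$ reduces the estimate to the scale-invariant form
$$
\bigl(|Dv|^{p_1}\bigr)_{Q_{R/2}(t_0,x_0)}^{1/p_1} \le N \bigl(|Dv|^{p_0}\bigr)_{Q_R(t_0,x_0)}^{1/p_0} + N |Q_R|^{-1/p_0} \|G\|_{L_{p_0}(Q_R(t_0,x_0))},
$$
where $\eta \equiv 1$ on $Q_{R/2}(t_0,x_0)$ turns the left-hand side into $Dv$. The first right-hand term is bounded by $(|Du|^{p_0})_{Q_R}^{1/p_0} + (|Dw|^{p_0})_{Q_R}^{1/p_0}$, which feeds into the $k=0$ term of the sum and, via step one, the $g$-term.

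The main obstacle is recovering the precise sum on the right of \eqref{eq0411_01} from the commutator
$$
G_\ell(t, x) = \frac{\alpha}{\Gamma(1-\alpha)} \int_S^t (t-s)^{-\alpha-1}\bigl(\eta(t) - \eta(s)\bigr) D_\ell v(s, x)\, ds,
$$
which encodes the non-locality of the Caputo derivative. For $t \in (\tilde t_0, t_0)$, split the $s$-integration into the near piece $(\tilde t_0, t)$, where $|\eta(t) - \eta(s)| \le N (t-s) R^{-2/\alpha}$, and the dyadic shells
$$
I_k := (t_0 - (2^{k+1}+1)R^{2/\alpha}, t_0 - (2^k + 1)R^{2/\alpha}), \quad k \ge 0,
$$
on which $\eta(s) = 0$, $|\eta(t) - \eta(s)| \le 1$, $(t - s) \sim 2^k R^{2/\alpha}$, and $v = u$ (since $w \equiv 0$ on $I_k$). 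By Minkowski's integral inequality in $x$ over $B_R(x_0)$ followed by Hölder in $s$, the $I_k$-contribution to $|Q_R|^{-1/p_0}\|G\|_{L_{p_0}(Q_R)}$ is bounded by $N \cdot 2^{-k\alpha} \bigl(\dashint_{I_k \times B_R(x_0)} |Du|^{p_0}\bigr)^{1/p_0}$ after a direct scaling computation collecting the factor $(2^k R^{2/\alpha})^{-\alpha}$ from the singular kernel against $|I_k|$ and $|Q_R|^{1/p_0}$. Enlarging $I_k$ to the expanding slab $(t_0 - (2^{k+1}+1)R^{2/\alpha}, t_0)$ inflates the average by a factor of at most three, giving exactly the form of \eqref{eq0411_01}. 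The near piece over $(\tilde t_0, t)$ is absorbed into the $k = 0$ term via Hölder in $s$ and the bound on $\|Dw\|_{L_{p_0}(Q_R)}$ from step one. Combining these yields \eqref{eq0411_01}.
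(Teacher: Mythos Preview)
Your proposal is correct and follows essentially the same approach as the paper: solve for $w$ with truncated data via Theorem~\ref{thm0412_1}, set $v = u - w$, apply Lemma~\ref{lem0823_2} to the locally homogeneous $v$, and estimate the commutator $G$ by a near-piece plus dyadic-shell decomposition. The only slip is that your shells $I_k$ start at $t_0 - 2R^{2/\alpha}$, leaving the strip $(t_0 - 2R^{2/\alpha}, \tilde t_0)$ uncovered; this is harmless since $\eta(s)=0$ and $|\eta(t)| \le N R^{-2/\alpha}(t-\tilde t_0) \le N R^{-2/\alpha}(t-s)$ there, so it folds into the near piece (the paper sidesteps this by taking shells $(t-2^{k+1}, t-2^k)$ relative to $t$ rather than $t_0$).
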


\begin{proof}
We extend $u$ and $g_i$ to be zero, again denoted by $u$ and $g_i$, on $(-\infty,0) \times \bR^d$.
Thanks to translation, it suffices to prove the desired inequalities when $x_0 = 0$.
Moreover, by scaling we assume that $R = 1$.
By considering
$$
-\partial_t I_0^{1-\alpha} u + D_i \left(a^{ij}_{\varepsilon}(x_1) D_j u \right)
= D_i \left( g_i + \left(a^{ij}_{\varepsilon}(x_1) - a^{ij}(x_1)\right) D_j u \right),
$$
where $a^{ij}_{\varepsilon}(x_1)$ are mollifications of $a^{ij}(x_1)$, and using the dominated convergence theorem, we may assume that the coefficients $a^{ij}(x_1)$ are infinitely differential with bounded derivatives.

For $R=1$ and $t_0 \in (0,\infty)$, set $\zeta = \zeta(t,x)$ to be an infinitely differentiable function defined on $\bR^{d+1}$ such that
$$
\zeta = 1 \quad \text{on} \quad (t_0-1, t_0) \times B_1,
$$
and
$$
\zeta = 0 \quad \text{on} \quad \bR^{d+1} \setminus (t_0-2^{2/\alpha}, t_0+2^{2/\alpha}) \times B_2.
$$
Using Theorem \ref{thm0412_1} with $p_0$, find $w \in \cH_{p_0,0}^{\alpha,1}(\bR^d_T)$ to be the solution of the problem

$$
\left\{
\begin{aligned}
-\partial_t I_{t_0-1}^{1-\alpha} w + D_i(a^{ij}(x_1) D_j w ) &= D_i (\zeta g_i)
\quad  \text{in} \,\, (t_0-1, t_0) \times \bR^d,
\\
w(t_0 - 1,x) &= 0 \quad  \text{on} \quad \bR^d.
\end{aligned}
\right.
$$
We extend $w$ to be zero on $(-\infty,t_0-1) \times \bR^d$.
From Theorem \ref{thm0412_1} we have
\begin{equation}
							\label{eq1214_01}
\|Dw\|_{L_{p_0}\left(Q_r(t_0,0)\right)} \leq N \|g\|_{L_{p_0}(Q_2(t_0,0))}
\end{equation}
for any $r > 0$, where $N = N(d,\delta,\alpha,p_0)$.
Set $v = u - w$ so that
$$
v =
\left\{
\begin{aligned}
u-w, &\quad t \in (t_0 - 1,t_0),
\\
u, &\quad t \in (-\infty, t_0 - 1],
\end{aligned}
\right.
$$
where we note that it is possible to have $t_0 - 1 < 0$.
Then by Lemma \ref{lem0206_1}, $v$ belongs to $\cH_{p_0,0}^{\alpha,1}\left((S,t_0) \times \bR^d\right)$ for $S = \min \{0, t_0 -1\}$, and $w$, $u$, and $v$ satisfy
$$
\partial_t^\alpha w = \partial_t I_{t_0-1}^{1-\alpha} w = \partial_t I_S^{1-\alpha} w,
\quad
I_S^{1-\alpha} w = \left\{
\begin{aligned}
I_{t_0-1}^{1-\alpha} w, &\quad t \geq t_0-1,
\\
0, &\quad  S \leq t < t_0-1,
\end{aligned}
\right.
$$
$$
\partial_t^\alpha u = \partial_t I_0^{1-\alpha} u = \partial_t I_S^{1-\alpha} u, \quad
I_S^{1-\alpha}u = \left\{
\begin{aligned}
I_0^{1-\alpha}u, &\quad t \geq 0,
\\
0 &\quad S \leq t < 0,
\end{aligned}
\right.
$$
and
\begin{equation}
							\label{eq0725_01}
-\partial_t I_S^{1-\alpha} v + D_i\left(a^{ij}(x_1) D_j v \right) = D_i h_i
\end{equation}
in $(S,t_0)\times \bR^d$, where
$$
h_i(t,x) = \left\{
\begin{aligned}
\left( 1 -\zeta(t,x)\right) g_i(t,x) \quad &\text{in} \,\, (t_0 - 1, t_0) \times \bR^d,
\\
g_i(t,x) \quad &\text{in} \,\, (S, t_0 - 1) \times \bR^d,
\end{aligned}
\right.
\quad i = 1, \ldots, d.
$$
Recall that $g_i$ are extended as zero for $t \leq 0$.

Let $\eta \in C_0^\infty(\bR)$ such that $0 \leq \eta(t) \leq 1$ and
$$
\eta(t) =
\left\{
\begin{aligned}
1 \quad &\text{if} \quad t \in (t_0-(1/2)^{2/\alpha},t_0),
\\
0 \quad &\text{if} \quad t \in \bR \setminus (t_0-1,t_0+1),
\end{aligned}
\right.
$$
and
$$
\left|\frac{\eta(t)-\eta(s)}{t-s}\right| \le N(\alpha).
$$

Since $v \in \cH_{p_0,0}^{\alpha,1}\left((S,t_0) \times B_1\right)$ satisfies \eqref{eq0725_01}
in $(S,t_0) \times B_1$ and $h_i = 0$ on $(t_0-1,t_0) \times B_1$,
by Lemma \ref{lem0823_2}
we have
\begin{multline}
							\label{eq0715_01}
\|D(\eta v)\|_{L_{p_1}((t_0-1,t_0) \times B_{1/2})} \leq N \|D(\eta v)\|_{L_{p_0}((t_0-1,t_0) \times B_1)} + N \|G\|_{L_{p_0}((t_0-1,t_0) \times B_1)}
\\
\leq N \| |D u| + |D w| + |G| \|_{L_{p_0}\left((t_0-1,t_0)\times B_1\right)},
\end{multline}
where $p_1 = p_1(d,\alpha,p_0)$ satisfying \eqref{eq1029_01}, $N = N(d, \delta,
\alpha,p_0)$, and $G$ is defined as in Lemma \ref{lem0823_2}.

Since $D v = 0$ for $t \leq S$, we write
\begin{align*}
&\frac{\Gamma(1-\alpha)}{\alpha} G(t,x) = \int_{-\infty}^t (t-s)^{-\alpha-1} \left( \eta(s) - \eta(t) \right) D v(s,x) \, ds\\
&= \int_{t-1}^t (t-s)^{-\alpha-1}\left( \eta(s) - \eta(t) \right) D v(s,x) \, ds\\
&\quad + \int_{-\infty}^{t-1} (t-s)^{-\alpha-1}\left( \eta(s) - \eta(t) \right) D v(s,x) \, ds := I_1(t,x)+I_2(t,x),
\end{align*}
where
\begin{align*}
|I_1(t,x)| \le N \int_{t-1}^t |t-s|^{-\alpha} |D v(s,x)|\,ds= N \int_0^1 |s|^{-\alpha} |D v(t-s,x)|\, ds.
\end{align*}
From this and the Minkowski inequality we have
\begin{multline}
							\label{eq0715_02}
\|I_1\|_{L_{p_0}\left((t_0-1,t_0) \times B_1\right)} \le N \|D v\|_{L_{p_0}\left((t_0-2,t_0)\times B_1\right)}
\\
\leq N \|D v\|_{L_{p_0}\left((t_0-1,t_0)\times B_1\right)} + N \|D u\|_{L_{p_0}\left((t_0-2,t_0-1)\times B_1\right)}.
\end{multline}
To estimate $I_2$, we see that
$\eta(s) = 0$ for any $s \in (-\infty, t-1)$ with $t \in (t_0-1,t_0)$.
Thus we have
$$
I_2(t,x) = -\eta(t) \int_{-\infty}^{t-1} (t-s)^{-\alpha-1} D v(s,x) \, ds.
$$
Then,
\begin{align*}
|I_2(t,x)| &\le \int_{-\infty}^{t-1} |t-s|^{-\alpha-1} |D v(s,x)| \, ds\\
&= \sum_{k=0}^\infty \int_{t-2^{k+1}}^{t-2^k} |t-s|^{-\alpha-1} |D v(s,x)|\,ds\\
&\le \sum_{k=0}^\infty \int_{t-2^{k+1}}^{t-2^k } 2^{-k(\alpha+1)} |D v(s,x)|\,ds.
\end{align*}
From this we have
\begin{equation*}
\|I_2\|_{L_{p_0}\left((t_0-1,t_0) \times B_1\right)}\le \sum_{k=0}^\infty 2^{-k(\alpha+1)} \left\| \int_{t-2^{k+1}}^{t-2^k} |D v(s,x)| \, ds \right\|_{L_{p_0}\left((t_0-1,t_0) \times B_1\right)}.
\end{equation*}
Since $t_0 - 1 < t < t_0$,
$$
\int_{t-2^{k+1}}^{t-2^k} |D v(s,x)|\, ds \leq \int_{t_0-(2^{k+1}+1)}^{t_0-2^k} |D v(s,x)|\, ds.
$$
Hence, by the Minkowski inequality,
\begin{align*}
&\left\| \int_{t-2^{k+1}}^{t-2^k } |D v(s,x)| \, ds \right\|_{L_{p_0}\left(Q_1(t_0,0)\right)}\\
&\le \int_{t_0 - (2^{k+1}+1)}^{t_0 - 2^k} \left( \int_{B_1} |D v(s,x)|^{p_0} \, dx \right)^{1/p_0} \, ds\\
&\le 2^{k+2}\left( \dashint_{\!t_0 - (2^{k+1}+1)}^{\,\,\,t_0} \dashint_{B_1} |D v(s,x)|^{p_0} \, dx \, ds \right)^{1/p_0}.
\end{align*}
It then follows that
\begin{align*}
&\|I_2\|_{L_{p_0}\left(Q_1(t_0,0)\right)}\\
&\le \sum_{k=0}^\infty 2^{-k\alpha+2}\left( \dashint_{\!t_0 - (2^{k+1}+1)}^{\,\,\,t_0} \dashint_{B_1} |D v(s,x)|^{p_0} \, dx \, ds \right)^{1/p_0}\\
&\le \sum_{k=0}^\infty 2^{-k\alpha+2} \left( \dashint_{\!t_0 - (2^{k+1}+1)}^{\,\,\,t_0} \dashint_{B_1} |D u(s,x)|^{p_0} \, dx \, ds \right)^{1/p_0}\\
&\quad + \sum_{k=0}^\infty 2^{-k\alpha+2}\left( \dashint_{\!t_0 - (2^{k+1}+1)}^{\,\,\,t_0} \dashint_{B_1} |D w(s,x)|^{p_0} \, dx \, ds \right)^{1/p_0},
\end{align*}
where
$$
\sum_{k=0}^\infty 2^{-k\alpha+2}\left( \dashint_{\!t_0 - (2^{k+1}+1)}^{\,\,\,t_0} \dashint_{B_1} |D w(s,x)|^{p_0} \, dx \, ds \right)^{1/p_0} \leq N(\alpha) \left( |D w|^{p_0} \right)^{1/p_0}_{Q_1(t_0,0)}.
$$
Combining the above inequalities, \eqref{eq0715_01}, and \eqref{eq0715_02}, we get
\begin{align*}
&\|D v\|_{L_{p_1}\left(Q_{1/2}(t_0,0)\right)} \leq N \left( |D w|^{p_0} \right)_{Q_1(t_0,0)}^{1/p_0}\\
&\quad + N \sum_{k=0}^\infty 2^{-k\alpha} \left( \dashint_{\!t_0 - (2^{k+1}+1)}^{\,\,\,t_0} \dashint_{B_1(x_0)} |D u(s,y)|^{p_0} \, dy \, ds \right)^{1/p_0}.
\end{align*}
We then use \eqref{eq1214_01} with $r=1$ to obtain \eqref{eq0411_01} with $R = 1$.
The proposition is proved.
\end{proof}

Let $\gamma\in (0,1)$, and let $p_0\in (1,\infty)$ and $p_1=p_1(d,\alpha,p_0)$ be from Proposition \ref{prop0406_1}.
Denote
\begin{equation}
							\label{eq0406_04}
\cA(\textsf{s}) = \left\{ (t,x) \in (-\infty,T) \times \bR^d: |D u(t,x)| > \textsf{s} \right\}
\end{equation}
and
\begin{multline}
							\label{eq0406_05}
\cB(\textsf{s}) = \big\{ (t,x) \in (-\infty,T) \times \bR^d:
\\
\gamma^{-1/p_0}\left( \cM |g|^{p_0} (t,x) \right)^{1/p_0} + \gamma^{-1/p_1}\left( \cS\cM |D u|^{p_0}(t,x)\right)^{1/p_0} > \textsf{s}  \big\},
\end{multline}
where, to well define $\cM$ and $\cS\cM$ (recall the definitions in \eqref{eq0406_03b} and \eqref{eq0406_03}), we extend a given function to be zero for $t \leq S$ if the function is defined on $(S,T) \times \bR^d$.

Set
\begin{equation*}
\cC_R(t,x) = (t-R^{2/\alpha},t+R^{2/\alpha}) \times B_R(x),\quad
\hat \cC_R(t,x)=\cC_R(t,x)\cap \{t\le T\}.
\end{equation*}

\begin{lemma}
							\label{lem0409_1}
Let $p_0\in (1,\infty)$, $\alpha \in (0,1)$, $T \in (0,\infty)$, $a^{ij} = a^{ij}(x_1)$, $R \in (0,\infty)$, and $\gamma \in (0,1)$.
Assume that Theorem \ref{thm0412_1} holds with this $p_0$ and $u \in \cH_{p_0,0}^{\alpha,1}(\bR^d_T)$ satisfies
$$
-\partial_t^\alpha u + D_i \left( a^{ij}(x_1) D_j u \right)
= D_i g_i
$$
in $(0,T) \times \bR^d$, where $g = (g_1,\ldots,g_d) \in L_p(\bR^d_T)$.
Then, there exists a constant $\kappa = \kappa(d,\delta,\alpha,p_0) > 1$ such that the following hold:
for $(t_0,x_0) \in (-\infty,T] \times \bR^d$ and $\textsf{s}>0$, if
\begin{equation}
							\label{eq0406_02}
|\cC_{R/4}(t_0,x_0) \cap \cA(\kappa s)|  \geq \gamma |\cC_{R/4}(t_0,x_0)|,
\end{equation}
then
$$
\hat\cC_{R/4}(t_0,x_0) \subset \cB(\textsf{s}).
$$
\end{lemma}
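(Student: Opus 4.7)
The plan is a contradiction argument of Caffarelli--Peral type. Suppose \eqref{eq0406_02} holds and pick any $(t_1, x_1) \in \hat{\cC}_{R/4}(t_0, x_0) \setminus \cB(\textsf{s})$. By \eqref{eq0406_05}, both $(\cM|g|^{p_0}(t_1,x_1))^{1/p_0} \le \gamma^{1/p_0}\textsf{s}$ and $(\cS\cM|Du|^{p_0}(t_1,x_1))^{1/p_0} \le \gamma^{1/p_1}\textsf{s}$ hold (with $\gamma^{1/\infty}=1$ when $p_1 = \infty$). The aim is to show that $|\cA(\kappa\textsf{s}) \cap \cC_{R/4}(t_0,x_0)| < \gamma\,|\cC_{R/4}(t_0,x_0)|$ for a suitable $\kappa = \kappa(d,\delta,\alpha,p_0)>1$, contradicting \eqref{eq0406_02}. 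Since $u$ and $g$ are extended by zero for $t<0$, one may assume $t_0+(R/4)^{2/\alpha}>0$; otherwise $Du \equiv 0$ almost everywhere on $\cC_{R/4}(t_0,x_0)$ and \eqref{eq0406_02} fails automatically.

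The first step is a geometric setup. Set $\bar{t} := \min\{T,\, t_0 + (R/4)^{2/\alpha}\} \in [0,T]$ and $\bar{x} := x_0$. Since $2(R/4)^{2/\alpha} \le (R/2)^{2/\alpha}$ (equivalent to $2 \le 2^{2/\alpha}$, which holds for $\alpha \le 2$), one verifies $\hat{\cC}_{R/4}(t_0,x_0) \subset Q_{R/2}(\bar{t},\bar{x}) \subset Q_R(\bar{t},\bar{x})$. Proposition \ref{prop0406_1}, applied at $(\bar{t},\bar{x})$ with radius $R$, then produces a decomposition $u = w+v$ on $Q_R(\bar{t},\bar{x})$ together with the estimates \eqref{eq8.13} and \eqref{eq0411_01} centered at $(\bar{t},\bar{x})$.

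The second step subordinates every averaging window on the right of \eqref{eq8.13} and \eqref{eq0411_01} to the maximal functions at $(t_1,x_1)$. Because $|x_1-\bar{x}|<R/4$ and $t_1 \le \bar{t}$, $Q_{2R}(\bar{t},\bar{x})$ sits (after an arbitrarily small time enlargement) inside a parabolic cylinder through $(t_1,x_1)$ of radius $9R/4$; and each rectangle $(\bar{t}-(2^{k+1}+1)R^{2/\alpha},\bar{t}) \times B_R(\bar{x})$, which is of the $Q_{R_1,R_2}$ form used to define $\cS\cM$, already contains $(t_1,x_1)$ because $t_1 > \bar{t}-2(R/4)^{2/\alpha}$ and $|x_1-\bar{x}|<R$. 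Combined with the bounds from $(t_1,x_1)\notin \cB(\textsf{s})$ and the volume comparison $|Q_{2R}(\bar{t},\bar{x})| \le N\,|\cC_{R/4}(t_0,x_0)|$, this yields
$$\left(|Dw|^{p_0}\right)^{1/p_0}_{\cC_{R/4}(t_0,x_0)} \le N\gamma^{1/p_0}\textsf{s},\qquad \left(|Dv|^{p_1}\right)^{1/p_1}_{Q_{R/2}(\bar{t},\bar{x})} \le N\gamma^{1/p_1}\textsf{s}.$$

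Finally, since $|Du|\le|Dw|+|Dv|$ on $\hat{\cC}_{R/4}(t_0,x_0) \subset Q_{R/2}(\bar{t},\bar{x})$ and $\cA(\kappa\textsf{s}) \cap \cC_{R/4}(t_0,x_0) = \cA(\kappa\textsf{s}) \cap \hat{\cC}_{R/4}(t_0,x_0)$, Chebyshev's inequality applied at exponent $p_0$ to $Dw$ and at exponent $p_1$ to $Dv$ gives
$$|\cA(\kappa\textsf{s}) \cap \cC_{R/4}(t_0,x_0)| \le \left[(N/\kappa)^{p_0} + (N/\kappa)^{p_1}\right] \gamma\,|\cC_{R/4}(t_0,x_0)|,$$
with the $p_1$-term interpreted as zero when $p_1=\infty$ (the $L_\infty$ bound then makes $\{|Dv|>\kappa\textsf{s}/2\}$ empty once $\kappa > 2N$). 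Choosing $\kappa=\kappa(d,\delta,\alpha,p_0)$ large enough that the expression in brackets is strictly less than $1$ closes the argument. The hardest part is the geometric bookkeeping of step two: arranging $(\bar{t},\bar{x})$ and the comparison cylinders so that every averaging window appearing in \eqref{eq8.13} and \eqref{eq0411_01} is controlled by either $\cM|g|^{p_0}(t_1,x_1)$ or $\cS\cM|Du|^{p_0}(t_1,x_1)$ with constants depending only on $d$ and $\alpha$; the Chebyshev step itself is then routine.
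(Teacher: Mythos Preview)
Your proof is correct and follows essentially the same approach as the paper's: a contradiction argument using the decomposition from Proposition~\ref{prop0406_1} at the shifted center $(\bar t,\bar x)=(\min\{T,t_0+(R/4)^{2/\alpha}\},x_0)$, the inclusion $\hat\cC_{R/4}(t_0,x_0)\subset Q_{R/2}(\bar t,\bar x)$, control of all averaging windows by the maximal functions at the assumed ``good'' point, and a Chebyshev estimate. The only cosmetic difference is that the paper splits the level set using two thresholds $K_1$ and $\kappa-K_1$ chosen sequentially, whereas you split symmetrically at $\kappa/2$; both choices yield a $\kappa$ depending only on $d,\delta,\alpha,p_0$.
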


\begin{proof}
By dividing the equation by $\textsf{s}$, we may assume that $\textsf{s} = 1$.
We only consider $(t_0,x_0) \in (-\infty,T] \times \bR^d$ such that $t_0 + (R/4)^{2/\alpha} \geq 0$, because otherwise,
$$
\cC_{R/4}(t_0,x_0) \cap \cA(\kappa) \subset \left\{ (t,x) \in (-\infty,0] \times \bR^d: |D^2 u(t,x)| > s \right\} = \emptyset
$$
as $u(t,x)$ is extended to be zero for $t < 0$.
Suppose that there is a point $(s,y) \in \hat\cC_{R/4}(t_0,x_0)$
such that
\begin{equation}
							\label{eq0406_01}
\gamma^{-1/p_0}\left( \cM |g|^{p_0} (s,y) \right)^{1/p_0} + \gamma^{-1/p_1} \left( \cS\cM |D u|^{p_0}(s,y)\right)^{1/p_0} \leq 1.
\end{equation}
Set
$$
t_1 := \min \{ t_0 + (R/4)^{2/\alpha}, T\} \quad \text{and} \quad x_1 := x_0.
$$
Then $(t_1,x_1) \in [0,T] \times \bR^d$ and by Proposition \ref{prop0406_1} there exist $p_1 = p_1(d,\alpha,p_0) \in (p_0,\infty]$ and $w \in \cH_{p_0,0}^{\alpha,1}\left((t_1-R^{2/\alpha}, t_1) \times \bR^d\right)$, $v \in \cH_{p_0,0}^{\alpha,1}\left((S,t_1) \times \bR^d\right)$ with $S=\min\{0,t_1-R^{2/\alpha}\}$, such that $u = w + v$ in $Q_R(t_1,x_1)$,
\begin{equation}
							\label{eq0409_01}
\left(|D w|^{p_0}\right)^{1/p_0}_{Q_R(t_1,x_1)} \leq N \left( |g|^{p_0} \right)_{Q_{2R}(t_1,x_1)}^{1/p_0},
\end{equation}
and
\begin{multline}
							\label{eq0409_02}
\left( |Dv|^{p_1} \right)_{Q_{R/2}(t_1,x_1)}^{1/p_1} \le  N \left( |g|^{p_0} \right)_{Q_{2R}(t_1,x_1)}^{1/p_0}
\\
+ N \sum_{k=0}^\infty 2^{-k \alpha} \left(\dashint_{\! t_1 - (2^{k+1}+1)R^{2/\alpha}}^{\,\,\,t_1} \dashint_{B_R(x_1)} |D u(\ell,z)|^{p_0} \, dz \, d \ell \right)^{1/p_0},
\end{multline}
where $N=N(d,\delta, \alpha,p_0)$.
Since $t_0 \leq T$, we have
$$
(s,y) \in \hat\cC_{R/4}(t_0,x_0)
\subset Q_{R/2}(t_1,x_1) \subset Q_{2R}(t_1,x_1),
$$
$$
(s,y) \in \hat\cC_{R/4}(t_0,x_0) \subset (t_1- (2^{k+1}+1)R^{2/\alpha}, t_1) \times B_R(x_1)
$$
for all $k = 0,1,\ldots$.
From these set inclusions, in particular, we observe that
$$
\dashint_{\! t_1 - (2^{k+1}+1)R^{2/\alpha}}^{\,\,\,t_1} \dashint_{B_R(x_1)} |Du(\ell,z)|^{p_0} \, dz \, d \ell \leq \cS\cM |Du|^{p_0}(s,y)
$$
for all $k=0,1,2,\ldots$.
Thus the inequality \eqref{eq0406_01} along with \eqref{eq0409_01} and \eqref{eq0409_02} implies that
$$
\left( |Dv|^{p_1} \right)_{Q_{R/2}(t_1,x_1)}^{1/p_1} \leq N \gamma^{1/p_0} + N \gamma^{1/p_1} \leq N_0 \gamma^{1/p_1},
$$
$$
\left(|D w|^{p_0}\right)^{1/p_0}_{Q_R(t_1,x_1)} \leq N_1 \gamma^{1/p_0},
$$
where $N_0$ and $N_1$ depend only on $d$, $\delta$, $\alpha$, and $p_0$.
Note that, for a sufficiently large $K_1$,
\begin{align*}
&|\cC_{R/4}(t_0,x_0) \cap \cA(\kappa)|
= |\{(t,x) \in \cC_{R/4}(t_0,x_0), t \in (-\infty,T): |Du(t,x)| > \kappa\}|\\
&\leq \left|\{ (t,x) \in Q_{R/2}(t_1,x_1): |D u(t,x)| > \kappa\}\right|\\
&\leq \left|\{(t,x) \in Q_{R/2}(t_1,x_1): |D w(t,x)| > \kappa - K_1 \}\right|\\
&\quad + \left|\{(t,x) \in Q_{R/2}(t_1,x_1): |D v(t,x)| > K_1 \}\right|\\
&\leq (\kappa-K_1)^{-p_0} \int_{Q_{R/2}(t_1,x_1)} |D w|^{p_0} \, dx \, dt + K_1^{-p_1} \int_{Q_{R/2}(t_1,x_1)} |D v|^{p_1} \, dx \, dt\\
&\leq \frac{N_1^{p_0} \gamma |Q_R|}{(\kappa - K_1)^{p_0}} + \frac{N_0^{p_1}\gamma|Q_{R/2}|}{K_1^{p_1}}I_{p_1 \neq \infty}\\
&\leq N(d,\alpha) |\cC_{R/4}| \left(\frac{N_1^{p_0} \gamma }{(\kappa - K_1)^{p_0}} + \gamma \left(\frac{N_0}{K_1}\right)^{p_0} I_{p_1 \neq \infty} \right) < \gamma |\cC_{R/4}(t_0,x_0)|,
\end{align*}
provided that we choose a sufficiently large $K_1(\ge N_0)$ depending only on $d$, $\delta$, $\alpha$, and $p_0$, so that
$$
N(d,\alpha)  (N_0/K_1)^{p_0} < 1/2,
$$
and then choose a $\kappa$ depending only on $d$, $\delta$, $\alpha$, and $p_0$, so that
$$
N(d,\alpha) N_1^{p_0}/(\kappa-K_1)^{p_0} < 1/2.
$$
Considering \eqref{eq0406_02}, we get a contradiction.
The lemma is proved.
\end{proof}

\section{$L_p$-estimates}
                    \label{sec6}

\begin{proof}[Proof of Theorem \ref{thm0412_1}]
When $p=2$, the theorem follows from Proposition \ref{prop0720_1}.
Suppose that the theorem holds for some $p_0\in [2,\infty)$, which is indeed true for $p_0 = 2$.
Fix $p_1 \in (p_0, \infty]$ determined by $d$, $\alpha$, and $p_0$ as in Proposition \ref{prop0406_1}.
Now we prove Theorem \ref{thm0412_1} for $p\in (p_0,p_1)$.
To do this, we only prove the a priori estimates \eqref{eq0411_04}, \eqref{eq0904_06}, and \eqref{eq0905_04}.
Once these a prior estimates are available, the existence results follow from the method of continuity and the solvability of a simple equation as in the proof of Proposition \ref{prop0720_1}.
In particular,  the solvability of a simple equation is guaranteed by the results in \cite{arXiv:1806.02635} for non-divergence form equations and by the a priori estimates.

Let us first prove \eqref{eq0905_04}.
As in the proof of Proposition \ref{prop0720_1}, we assume $u \in C_0^\infty\left([0,T] \times \bR^d\right)$ with $u(0,x) = 0$.
Note that
\begin{equation}
                                    \label{eq8.47}
\|Du\|_{L_p(\bR^d_T)}^p = p \int_0^\infty |\cA(\textsf{s})| \textsf{s}^{p-1} \, d \textsf{s} = p
\kappa^p \int_0^\infty |\cA(\kappa \textsf{s})| \textsf{s}^{p-1} \, d \textsf{s}.
\end{equation}
By Lemmas \ref{lem0409_1} and \cite[Lemma A.20]{arXiv:1806.02635} it follows that
\begin{equation}
                                    \label{eq8.46}
|\cA(\kappa \textsf{s})| \leq N(d,\alpha) \gamma|\cB(\textsf{s})|
\end{equation}
for all $\textsf{s} \in (0,\infty)$.
Hence, by the Hardy-Littlewood maximal function theorem,
\begin{align*}
&\|D u\|_{L_p(\bR^d_T)}^p
\leq N p  \kappa^p \gamma \int_0^\infty |\cB(\textsf{s})| \textsf{s}^{p-1} \, d \textsf{s}\\
&\le N\gamma \int_0^\infty\left|\left\{ (t,x) \in (-\infty,T) \times \bR^d:\gamma^{-\frac 1{ p_1}}\left( \cS\cM |Du|^{p_0}(t,x)\right)^{\frac 1 {p_0}} > \textsf{s}/2 \right\}\right| \textsf{s}^{p-1} \, d \textsf{s}\\
&\quad +
N\gamma \int_0^\infty\left|\left\{ (t,x) \in (-\infty,T) \times \bR^d:\gamma^{-\frac 1 {p_0}}\left( \cM |g|^{p_0} (t,x) \right)^{\frac 1 {p_0}} > \textsf{s}/2 \right\}\right| \textsf{s}^{p-1} \, d \textsf{s}\\
&\leq N \gamma^{1-p/p_1} \|Du\|^p_{L_p(\bR^d_T)} + N \gamma^{1-p/p_0} \|g\|^p_{L_p(\bR^d_T)},
\end{align*}
where $N = N(d,\delta,\alpha,p)$.
By choosing $\gamma \in (0,1)$ so that
$$
N \gamma^{1-p/p_1} < 1/2,
$$
which is possible because $p\in (p_0,p_1)$, we obtain \eqref{eq0905_04}.

Next, we prove \eqref{eq0411_04}, which follows easily from \eqref{eq0905_04} just proved above and S. Agmon's idea.
Indeed, by following the proof of \cite[Lemma 5.5]{MR2304157}, we obtain \eqref{eq0411_04} for $\lambda \in [\lambda_0, \infty)$, where $\lambda_0$ is sufficiently large number.
For $\lambda \in (0,\lambda_0)$, we use a dilation argument.

Finally, as in the proof of Proposition \ref{prop0720_1}, we obtain the estimate \eqref{eq0904_06} by using \eqref{eq0411_04} and Lemma \ref{lem0904_1}.

Now that we have proved Theorem \ref{thm0412_1} for $p \in [p_0,p_1)$.
We repeat this procedure until we have $p_1 = \infty$.
Indeed, this can be accomplished by finitely many iterations because of \eqref{eq1029_01}, which shows that each time the increment from $p_0$ to $p_1$ can be made bigger than a positive number depending only on $d$ and $\alpha$.
Thus, in finite steps we get a $p_0$ which is larger than $d + 1/\alpha$, so that $p_1=p_1(d,\alpha,p_0)=\infty$. Therefore, the theorem is proved for any $p\in [2,\infty)$.

For $p \in (1,2)$, we use a duality argument.
As above, we only prove the a priori estimates \eqref{eq0411_04}, \eqref{eq0904_06}, and \eqref{eq0905_04}.
We again assume $u \in C_0^\infty\left([0,T] \times \bR^d\right)$ with $u(0,x) = 0$.
Let $\phi_\ell \in L_q(\bR^d_T)$, $\ell = 1, \ldots, d$, and $\psi \in L_q(\bR^d_T)$, where $1/p+1/q=1$.
Then
$$
\phi_\ell(-t,x), \,\, \psi(-t,x) \in L_q\left((-T,0) \times \bR^d\right).
$$
Using Theorem \ref{thm0412_1} for $q \in [2,\infty)$, find $w \in \bH_{q,0}^{\alpha,1}\left((-T,0) \times \bR^d\right)$ satisfying
$$
- \partial_t^\alpha w + D_i\left(a^{ji}(x_1) D_j w \right) - \lambda w = D_\ell \left(-\phi_\ell (-t,x)\right) + \psi(-t,x)
$$
in $(-T,0) \times \bR^d$
with the estimates
\begin{multline}
							\label{eq1205_01}
\|Dw\|_{L_q\left((-T,0) \times \bR^d\right)} + \sqrt{\lambda} \|w\|_{L_q\left((-T,0) \times \bR^d\right)}
\\
\leq N \|\phi_\ell(-t,x)\|_{L_q\left((-T,0) \times \bR^d\right)} + \frac{N}{\sqrt{\lambda}} \|\psi(-t,x)\|_{L_q\left((-T,0) \times \bR^d\right)}
\\
= N \|\phi_\ell\|_{L_q(\bR^d_T)} + \frac{N}{\sqrt{\lambda}} \|\psi\|_{L_q(\bR^d_T)},
\end{multline}
provided that $\lambda > 0$ or $\lambda = 0$ with $\psi \equiv 0$,
where $N = N(d,\delta,\alpha,q)$, and
\begin{equation}
							\label{eq1205_02}
\|Dw\|_{L_q\left((-T,0) \times \bR^d\right)} + \|w\|_{L_q\left((-T,0) \times \bR^d\right)} \leq N \|\phi_\ell\|_{L_q(\bR^d_T)} + N \|\psi\|_{L_q(\bR^d_T)},
\end{equation}
provided that $\lambda = 0$ with $\psi \not\equiv  0$, where $N = N(d,\delta,\alpha,q, T)$.
We here note that
$$
\partial_t^\alpha w = \partial_t I_{-T}^{1-\alpha} w.
$$
Set $\varphi(t,x) = u(-t,x)$ and $\tilde{\varphi}(t,x) = w(-t,x)$.
Considering $w_k \in C_0^\infty\left([-T,0] \times \bR^d\right)$ with $w_k(-T,0) = 0$ such that $w_k \to w$ in $\bH_{q,0}^{\alpha,1}\left((-T,0) \times \bR^d \right)$, we observe that
$$
\int_0^T \int_{\bR^d} \left(\phi_\ell D_\ell u + \psi u\right)  \, dx \, dt
$$
$$
= \int_{-T}^0 \int_{\bR^d} \left( \phi_\ell(-t,x) D_\ell \varphi(t,x) + \psi(-t,x) \varphi(t,x) \right) \, dx \, dt
$$
$$
= \int_{-T}^0 \int_{\bR^d} \left( I_{-T}^{1-\alpha} w \, \varphi_t - a^{ji}(x_1) D_j w D_i \varphi - \lambda w \varphi \right) \, dx \, dt
$$
$$
= \int_0^T \int_{\bR^d} \left( I_0^{1-\alpha} u \, \tilde{\varphi}_t - a^{ij}(x_1)D_j u D_i \tilde{\varphi} - \lambda u \tilde{\varphi} \right) \, dx \, dt = \int_0^T \int_{\bR^d} \left(f \tilde{\varphi} - g_i D_i \tilde{\varphi} \right) \, dx \, dt
$$
$$
\leq \|f\|_{L_p(\bR^d_T)} \|w(-t,x)\|_{L_q(\bR^d_T)} + \|g_i\|_{L_p(\bR^d_T)} \|D_iw(-t,x)\|_{L_q(\bR^d_T)}.
$$
This together with the estimates \eqref{eq1205_01} and \eqref{eq1205_02} for $w$ implies \eqref{eq0411_04}, \eqref{eq0904_06}, and \eqref{eq0905_04}.
The theorem is proved.
\end{proof}

To prove Theorem \ref{main_thm}, we extend Proposition \ref{prop0406_1} to the case when $a^{ij}=a^{ij}(t,x)$ satisfy Assumption \ref{assump2.2}.

\begin{proposition}
							\label{prop0515_1}
Let $p_0 \in (1,\infty)$, $\alpha,\gamma_0 \in (0,1)$, $T \in (0,\infty)$, $\mu\in (1,\infty)$, $\nu=\mu/(\mu-1)$, and $a^{ij} = a^{ij}(t,x)$ satisfy Assumption \ref{assump2.2} ($\gamma_0$).
Assume that $u \in \cH_{p_0,0}^{\alpha,1}(\bR^d_T)$ vanishes for $(t,x) \notin Q_{R_0}(t_1,x_1)$ for some $(t_1,x_1)\in \bR^{d+1}$, and satisfies
\begin{equation}
							\label{eq1207_01}
-\partial_t I_0^{1-\alpha}u + D_i\left(a^{ij}(t,x) D_j u\right) = D_i g_i
\end{equation}
in $\bR^d_T$, where $g = (g_1,\ldots,g_d) \in L_{p_0}(\bR^d_T)$.
Then there exists
$$
p_1 = p_1(d,\alpha,p_0)\in (p_0,\infty]
$$
satisfying \eqref{eq1029_01} and the following.
For any $(t_0,x_0) \in [0,T] \times \bR^d$ and $R \in (0,\infty)$,
there exist
$$
w \in \cH_{p_0,0}^{\alpha,1}((t_0-R^{2/\alpha}, t_0)\times \bR^d), \quad v \in \cH_{p_0,0}^{\alpha,1}((S, t_0)\times \bR^d),
$$
where $S := \min\{0, t_0-R^{2/\alpha}\}$,
such that $u = w + v$ in $Q_R(t_0,x_0)$,
\begin{equation*}
\left( |Dw|^{p_0} \right)_{Q_R(t_0,x_0)}^{\frac{1}{p_0}} \le N \left( |g_i|^{p_0} \right)_{Q_{2R}(t_0,x_0)}^{\frac{1}{p_0}} + N \gamma_0^{\frac{1}{p_0\nu}} \left( |Du|^{p_0\mu} \right)_{Q_{2R}(t_0,x_0)}^{\frac{1}{p_0\mu}},
\end{equation*}
and
\begin{multline*}
\left( |Dv|^{p_1} \right)_{Q_{R/2}(t_0,x_0)}^{\frac{1}{p_1}} \leq N \left( |g_i|^{p_0}\right)_{Q_{2R}(t_0,x_0)}^{\frac{1}{p_0}} + N \gamma_0^{\frac{1}{p_0\nu}} \left( |Du|^{p_0\mu} \right)_{Q_{2R}(t_0,x_0)}^{\frac{1}{p_0\mu}}
\\
+ N \sum_{k=0}^\infty 2^{-k\alpha} \left( \dashint_{\!t_0 - (2^{k+1}+1)R^{2/\alpha}}^{\,\,\,t_0} \dashint_{B_R(x_0)} |Du(s,y)|^{p_0} \, dy \, ds \right)^{\frac{1}{p_0}},
\end{multline*}
where $N=N(d,\delta, \alpha,p_0,\mu)$.
\end{proposition}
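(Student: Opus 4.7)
The plan is to prove Proposition~\ref{prop0515_1} by reducing it to Proposition~\ref{prop0406_1} through a \emph{frozen coefficient} perturbation. One replaces $a^{ij}(t,x)$ by its partial average $\bar a^{ij}(x_1)$ over $(t,x')$, applies Proposition~\ref{prop0406_1} to the resulting equation with an extra source term, and controls that extra term using H\"older's inequality and Assumption~\ref{assump2.2}($\gamma_0$). As in Proposition~\ref{prop0406_1}, a preliminary mollification of $a^{ij}$ in $(t,x')$ (preserving $\delta$ and $\gamma_0$) lets us assume the coefficients are smooth, and translation invariance reduces to $(t_0,x_0)=(0,0)$. The argument then splits according to whether $2R\le R_0$ or $2R > R_0$.

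\emph{Main case $2R\le R_0$.} Define
\[
\bar a^{ij}(y_1) = \dashint_{Q'_{2R}(t_0,x_0')} a^{ij}(\tau,y_1,z')\,dz'\,d\tau,
\]
which depends only on $y_1$ and inherits the ellipticity constant $\delta$. Rewriting \eqref{eq1207_01} gives
\[
-\partial_t I_0^{1-\alpha} u + D_i\bigl(\bar a^{ij}(x_1)D_j u\bigr) = D_i \tilde g_i,\quad \tilde g_i := g_i + \bigl(\bar a^{ij}(x_1)-a^{ij}(t,x)\bigr)D_j u.
\]
I would then apply Proposition~\ref{prop0406_1} to this equation, producing the decomposition $u=w+v$ on $Q_R(t_0,x_0)$ together with the claimed $Dw$ and $Dv$ bounds, but with $\tilde g_i$ in place of $g_i$.

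It remains to estimate $(|\tilde g_i - g_i|^{p_0})_{Q_{2R}(t_0,x_0)}^{1/p_0}$. By H\"older's inequality with conjugate exponents $\mu,\nu$,
\[
\Bigl(\dashint_{Q_{2R}}\bigl|(\bar a^{ij}-a^{ij})D_j u\bigr|^{p_0}\Bigr)^{1/p_0}\le\Bigl(\dashint_{Q_{2R}}|\bar a^{ij}-a^{ij}|^{p_0\nu}\Bigr)^{1/(p_0\nu)}\Bigl(\dashint_{Q_{2R}}|D_j u|^{p_0\mu}\Bigr)^{1/(p_0\mu)}.
\]
Using the pointwise ellipticity bound $|\bar a^{ij}-a^{ij}|\le 2\delta^{-1}$ to interpolate $|\bar a^{ij}-a^{ij}|^{p_0\nu}\le (2\delta^{-1})^{p_0\nu-1}|\bar a^{ij}-a^{ij}|$, and then invoking Assumption~\ref{assump2.2}($\gamma_0$) (valid since $2R \le R_0$) together with Fubini, one obtains
\[
\Bigl(\dashint_{Q_{2R}}|\bar a^{ij}-a^{ij}|^{p_0\nu}\Bigr)^{1/(p_0\nu)} \le N(d,\delta,p_0,\nu)\,\gamma_0^{1/(p_0\nu)}.
\]
Substituting this into the above yields precisely the $N\gamma_0^{1/(p_0\nu)}(|Du|^{p_0\mu})_{Q_{2R}}^{1/(p_0\mu)}$ contribution that appears in both target estimates.

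\emph{Complementary case $2R>R_0$.} Here the support hypothesis $u\equiv 0$ outside $Q_{R_0}(t_1,x_1)$ takes over. I would apply the main-case construction at the truncated scale $R_0/2$ instead of $2R$, cover $Q_{2R}(t_0,x_0)$ by finitely many cylinders $Q_{R_0/2}$ meeting $Q_{R_0}(t_1,x_1)$, and sum the resulting bounds, each of which carries the $\gamma_0^{1/(p_0\nu)}$ factor from the main case. A further application of H\"older against the indicator $\mathbf 1_{Q_{R_0}(t_1,x_1)}$ converts the local $L^{p_0\mu}$ means on the $Q_{R_0/2}$ pieces into the global $L^{p_0\mu}$ mean over $Q_{2R}(t_0,x_0)$ required by the statement. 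The main obstacle is controlling the number of covering cylinders and the associated geometric factors so that the final constant depends only on $d,\delta,\alpha,p_0,\mu$ and not on $\gamma_0$ or the ratio $R/R_0$; once this is done, the statement follows as a direct reassembly of the Case~1 estimate on each piece.
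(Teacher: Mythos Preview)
Your main case $2R\le R_0$ is correct and matches the paper's argument.

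The gap is in the case $2R>R_0$. Your covering plan is not well-formed: Proposition~\ref{prop0406_1} must be applied once, at scale $R$, with a single frozen $\bar a^{ij}(x_1)$, to produce the decomposition $u=w+v$ on $Q_R(t_0,x_0)$; there is no way to patch together decompositions obtained on sub-cylinders of scale $R_0/2$ into a single such pair. If instead you intend only to cover the error-term integral, you still face the mismatch between the single $\bar a^{ij}$ used in the equation and the different local averages $\bar a^{ij}_k$ that Assumption~\ref{assump2.2} controls on each covering piece. The obstacle you yourself flag (possible dependence of the constant on $R/R_0$) is a symptom of this.

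The paper avoids covering by a one-line observation. When $2R>R_0$, freeze at the \emph{support} center and scale: take $\bar a^{ij}(y_1)=\dashint_{Q'_{R_0}(t_1,x_1')}a^{ij}(\tau,y_1,z')\,dz'\,d\tau$, and still apply Proposition~\ref{prop0406_1} at the original scale $R$ to get $u=w+v$ on $Q_R(t_0,x_0)$. For the error term $(\bar a^{ij}-a^{ij})D_ju$, since $Du\equiv 0$ outside $Q_{R_0}(t_1,x_1)$, one inserts for free the indicator $1_Q$ of the box $Q=(t_1-R_0^{2/\alpha},t_1)\times({x_1}_1-R_0,{x_1}_1+R_0)\times B'_{R_0}(x_1')\supset Q_{R_0}(t_1,x_1)$. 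After your H\"older step and the interpolation $|\bar a-a|^{p_0\nu}\le(2/\delta)^{p_0\nu-1}|\bar a-a|$, the coefficient factor is bounded by the average of $|\bar a^{ij}-a^{ij}|\,1_Q$ over the $2R$-box; since $|Q|$ is no larger than that box, this is at most the average over $Q$ itself, which is $\le\gamma_0$ by Assumption~\ref{assump2.2} applied at radius $R_0$ and center $(t_1,x_1)$. No covering, no dependence on $R/R_0$, and the constant remains $N(d,\delta,\alpha,p_0,\mu)$.
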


\begin{proof}
Denote
$$
Q:=(t_0-(2R)^{2/\alpha}, t_0) \times ({x_0}_1-2R,{x_0}_1+2R) \times B_{2R}'(x_0')
$$
if $2R\le R_0$, and
$$
Q:= (t_1-R_0^{2/\alpha}, t_1) \times ({x_1}_1-R_0,{x_1}_1+R_0) \times B_{R_0}'(x_1')
$$
if $2R > R_0$,
where ${x_0}_1$ and ${x_1}_1$ are the first coordinates of $x_0$ and $x_1 \in \bR^d$, respectively.
Note that in both cases
$$
\frac{|Q|}{\left|(t_0-(2R)^{2/\alpha}, t_0) \times ({x_0}_1-2R,{x_0}_1+2R) \times B_{2R}'(x_0')\right|} \leq 1.
$$
Then, by Assumption \ref{assump2.2}, there exist $\bar a^{ij}=\bar a^{ij}(x_1)$ such that
\begin{multline}
							\label{eq1207_02}
\sup_{i,j} \dashint_{\!{x_0}_1 - 2R}^{\,\,\,{x_0}_1+2R} \dashint_{Q'_{2R}(t_0,x_0')} \left|a^{ij}(s,y_1,y') - \bar{a}^{ij}(y_1)\right| 1_Q \, dy' ds \, d y_1
\\
= \sup_{i,j} \dashint_{\!{x_0}_1 - 2R}^{\,\,\,{x_0}_1+2R} \dashint_{Q'_{2R}(t_0,x_0')} \left|a^{ij}(s,y_1,y') - \bar{a}^{ij}(y_1)\right| \, dy' ds \, d y_1 \leq \gamma_0
\end{multline}
if $2R \leq R_0$, and
\begin{multline}
							\label{eq1207_03}
\sup_{i,j} \dashint_{\!{x_0}_1 - 2R}^{\,\,\,{x_0}_1+2R} \dashint_{Q'_{2R}(t_0,x_0')} \left|a^{ij}(s,y_1,y') - \bar{a}^{ij}(y_1)\right| 1_Q \, dy' ds \, d y_1
\\
\leq \sup_{i,j} \dashint_{\!{x_1}_1 - R_0}^{\,\,\,{x_1}_1 + R_0} \dashint_{Q'_{R_0}(t_1,x_1')} \left|a^{ij}(s,y_1,y') - \bar{a}^{ij}(y_1)\right| \, dy' ds \, d y_1 \leq \gamma_0
\end{multline}
if $2R > R_0$, where $1_Q$ is the indicator function of $Q$.
We then rewrite \eqref{eq1207_01} into
$$
-\partial_t^\alpha u +  D_i\left(\bar a^{ij}(x_1)D_j u \right) = D_i( \bar{g}_i ),
$$
where
$$
\bar{g}_i = g_i + \left(\bar a^{ij}(x_1)-a^{ij}(t,x)\right)D_j u.
$$
Now that Theorem \ref{thm0412_1} holds for this equation with $p_0$,  it follows from Proposition \ref{prop0406_1} that there exist
$$
w \in \cH_{p_0,0}^{\alpha,1}((t_0-R^{2/\alpha}, t_0)\times \bR^d), \quad v \in \cH_{p_0,0}^{\alpha,1}((S, t_0)\times \bR^d),
$$
where $S := \min\{0, t_0-R^{2/\alpha}\}$,
such that $u = w + v$ in $Q_R(t_0,x_0)$, and \eqref{eq8.13} and \eqref{eq0411_01} hold with $\bar{g}_i$ in place of $g_i$. To conclude the proof, it remains to notice that
$$
\left( |\bar{g}_i|^{p_0} \right)_{Q_{2R}(t_0,x_0)}^{\frac{1}{p_0}}
\le \left( |g_i|^{p_0} \right)_{Q_{2R}(t_0,x_0)}^{\frac{1}{p_0}} + \left( |(\bar a^{ij}(x_1)-a^{ij})D_j u|^{p_0} \right)_{Q_{2R}(t_0,x_0)}^{\frac{1}{p_0}},
$$
where by H\"older's inequality, \eqref{eq1207_02}, \eqref{eq1207_03}, the fact that $u$ has compact support in $Q_{R_0}(t_1,x_1)$, and the inclusions $Q_{2R}(t_0,x_0) \subset Q$ and $Q_{R_0}(t_1,x_1) \subset Q$ when $2R \leq R_0$ and $2R > R_0$, respectively, we have
$$
\left( |(\bar a^{ij}(x_1)-a^{ij})D_j u|^{p_0} \right)_{Q_{2R}(t_0,x_0)}^{\frac{1}{p_0}} = \left( |(\bar a^{ij}(x_1)-a^{ij}) 1_Q D_j u|^{p_0} \right)_{Q_{2R}(t_0,x_0)}^{\frac{1}{p_0}}
$$
$$
\leq \left( N_0 \dashint_{\!{x_0}_1 - 2R}^{\,\,\,{x_0}_1+2R} \dashint_{Q'_{2R}(t_0,x_0')} |\bar{a}^{ij}(x_1) - a^{ij}|^{p_0 \nu} 1_Q \, dx \, dt \right)^{\frac{1}{p_0 \nu}} \left( |Du|^{p_0\mu} \right)_{Q_{2R}(t_0,x_0)}^{\frac{1}{p_0 \mu}}
$$
$$
\leq N \gamma_0^{\frac{1}{p_0 \nu}} \left( |Du|^{p_0\mu} \right)_{Q_{2R}(t_0,x_0)}^{\frac{1}{p_0 \mu}}.
$$
In particular,
$$
N_0 = \frac{\left|(t_0-(2R)^{2/\alpha}, t_0) \times ({x_0}_1-2R,{x_0}_1+2R) \times B_{2R}'(x_0')\right|}{|Q_{2R}(t_0,x_0)|}
$$
$$
= \frac{2\Gamma(d/2+1)}{\sqrt{\pi}\Gamma(d/2+1/2)}.
$$
The proposition is proved.
\end{proof}

Now we define $\cA(\textsf{s})$ as in \eqref{eq0406_04}, but instead of \eqref{eq0406_05} we define
\begin{multline*}
\cB(\textsf{s}) = \big\{ (t,x) \in (-\infty,T) \times \bR^d:
\gamma^{-\frac{1}{p_0}}\left( \cM |g|^{p_0} (t,x) \right)^{\frac{1}{p_0}}
\\
+\gamma^{-\frac{1}{p_0}}\gamma_0^{\frac{1}{p_0\nu}}\left( \cM |D u|^{p_0\mu} (t,x) \right)^{\frac{1}{p_0\mu}}
+ \gamma^{-\frac{1}{p_1}}\left( \cS\cM |D u|^{p_0}(t,x)\right)^{\frac{1}{p_0}} > \textsf{s}  \big\}.
\end{multline*}

By following the proof of Lemma \ref{lem0409_1} with minor modifications, from Proposition \ref{prop0515_1}, we get the following lemma.

\begin{lemma}
							\label{lem6.3}
Let $p\in (1,\infty)$, $\alpha,\gamma_0,\gamma \in (0,1)$, $T \in (0,\infty)$, $R \in (0,\infty)$, $\mu\in (1,\infty)$, $\nu=\mu/(\mu-1)$, and $a^{ij} = a^{ij}(t,x)$ satisfy Assumption \ref{assump2.2} ($\gamma_0$).
Assume that $u \in \cH_{p,0}^{\alpha,1}(\bR^d_T)$ vanishes for $(t,x) \notin Q_{R_0}(t_1,x_1)$ for some $(t_1,x_1)\in \bR^{d+1}$, and satisfies \eqref{eq1207_01} in $(0,T) \times \bR^d$.
Then, there exists a constant $\kappa = \kappa(d,\delta,\alpha,p_0,\mu) > 1$ such that the following hold:
for $(t_0,x_0) \in (-\infty,T] \times \bR^d$ and $\textsf{s}>0$, if
\begin{equation*}
|\cC_{R/4}(t_0,x_0) \cap \cA(\kappa \textsf{s})|  \geq \gamma |\cC_{R/4}(t_0,x_0)|,
\end{equation*}
then
$$
\hat\cC_{R/4}(t_0,x_0) \subset \cB(\textsf{s}).
$$
\end{lemma}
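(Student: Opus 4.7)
The plan is to mimic the argument of Lemma \ref{lem0409_1} verbatim in its structure, substituting Proposition \ref{prop0515_1} for Proposition \ref{prop0406_1} and carrying the extra perturbation term $\gamma_0^{1/(p_0\nu)}(|Du|^{p_0\mu})^{1/(p_0\mu)}$ through the estimates; this extra term is precisely the reason the new maximal function term $\gamma^{-1/p_0}\gamma_0^{1/(p_0\nu)}(\cM|Du|^{p_0\mu})^{1/(p_0\mu)}$ has been incorporated into the definition of $\cB(\textsf{s})$.

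First I would normalize to $\textsf{s}=1$ by the usual scaling in the equation (dividing $u$ and $g$ by $\textsf{s}$), and reduce to the case $t_0+(R/4)^{2/\alpha}\ge 0$, since otherwise $\cC_{R/4}(t_0,x_0)\cap\cA(\kappa)=\emptyset$ as $u$ is extended by zero for $t<0$. Next, argue by contradiction: suppose there exists $(s,y)\in\hat\cC_{R/4}(t_0,x_0)$ with
\[
\gamma^{-\tfrac{1}{p_0}}(\cM|g|^{p_0}(s,y))^{\tfrac{1}{p_0}}+\gamma^{-\tfrac{1}{p_0}}\gamma_0^{\tfrac{1}{p_0\nu}}(\cM|Du|^{p_0\mu}(s,y))^{\tfrac{1}{p_0\mu}}+\gamma^{-\tfrac{1}{p_1}}(\cS\cM|Du|^{p_0}(s,y))^{\tfrac{1}{p_0}}\le 1,
\]
and set $t_1:=\min\{t_0+(R/4)^{2/\alpha},T\}$, $x_1:=x_0$, so that $(t_1,x_1)\in[0,T]\times\bR^d$ and $\hat\cC_{R/4}(t_0,x_0)\subset Q_{R/2}(t_1,x_1)\subset Q_{2R}(t_1,x_1)$, together with the inclusion $\hat\cC_{R/4}(t_0,x_0)\subset (t_1-(2^{k+1}+1)R^{2/\alpha},t_1)\times B_R(x_1)$ for all $k\ge 0$.

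Then I would apply Proposition \ref{prop0515_1} at $(t_1,x_1)$ to obtain $u=w+v$ in $Q_R(t_1,x_1)$ with
\[
(|Dw|^{p_0})_{Q_R(t_1,x_1)}^{1/p_0}\le N_1\bigl(\gamma^{1/p_0}+\gamma_0^{1/(p_0\nu)}\gamma^{1/(p_0\mu)}\bigr)\le 2N_1\gamma^{1/p_0}
\]
(using $\gamma<1$ and $\gamma_0<1$), and likewise
\[
(|Dv|^{p_1})_{Q_{R/2}(t_1,x_1)}^{1/p_1}\le N_0\bigl(\gamma^{1/p_0}+\gamma_0^{1/(p_0\nu)}\gamma^{1/(p_0\mu)}+\gamma^{1/p_1}\bigr)\le 3N_0\gamma^{1/p_1},
\]
where the hypothesis at $(s,y)$ and the set inclusions above bound each averaged term. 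The constants $N_0,N_1$ depend only on $d,\delta,\alpha,p_0,\mu$.

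Finally, following the Chebyshev-type estimate in Lemma \ref{lem0409_1}, I would split
\[
|\cC_{R/4}(t_0,x_0)\cap\cA(\kappa)|\le |\{Q_{R/2}(t_1,x_1):|Dw|>\kappa-K_1\}|+|\{Q_{R/2}(t_1,x_1):|Dv|>K_1\}|,
\]
and bound the right-hand side by
\[
\tfrac{(2N_1)^{p_0}\gamma|Q_R|}{(\kappa-K_1)^{p_0}}+\tfrac{(3N_0)^{p_1}\gamma|Q_{R/2}|}{K_1^{p_1}}\mathbf 1_{p_1<\infty},
\]
which can be made strictly smaller than $\gamma|\cC_{R/4}(t_0,x_0)|$ by first choosing $K_1$ large (depending only on $d,\delta,\alpha,p_0,\mu$) and then $\kappa=\kappa(d,\delta,\alpha,p_0,\mu)$ large. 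This contradicts the hypothesis and completes the proof.

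The only subtle point, and the main thing to be careful about, is keeping track of the fact that the new extra term in $\cB(\textsf{s})$ contributes a factor of the form $\gamma_0^{1/(p_0\nu)}\gamma^{1/(p_0\mu)}$ rather than $\gamma^{1/p_0}$; however, since $\gamma_0<1$ and $\gamma<1$ with $1/(p_0\mu)<1/p_0$, this factor is absorbed into the $\gamma^{1/p_0}$ bound on $(|Dw|^{p_0})_{Q_R}^{1/p_0}$ and does not affect the choice of $K_1$ or $\kappa$. Everything else, including the reduction to $\textsf{s}=1$, the identification of $t_1,x_1$, and the final Chebyshev argument, is formally identical to Lemma \ref{lem0409_1}.
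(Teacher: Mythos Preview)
Your approach is correct and matches the paper, which simply instructs one to follow the proof of Lemma~\ref{lem0409_1} with minor modifications using Proposition~\ref{prop0515_1} in place of Proposition~\ref{prop0406_1}.

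Two small points. First, your justification that $\gamma_0^{1/(p_0\nu)}\gamma^{1/(p_0\mu)}\le\gamma^{1/p_0}$ because ``$\gamma_0<1$ and $\gamma<1$ with $1/(p_0\mu)<1/p_0$'' is not valid: for $\gamma<1$ and $\mu>1$ one has $\gamma^{1/(p_0\mu)}>\gamma^{1/p_0}$, so the product need not be bounded by $\gamma^{1/p_0}$ (it would require $\gamma_0\le\gamma$, which is not assumed). The desired bound, however, follows more directly and the intermediate step is unnecessary: the assumed inequality at $(s,y)$ already yields $\gamma_0^{1/(p_0\nu)}(\cM|Du|^{p_0\mu}(s,y))^{1/(p_0\mu)}\le\gamma^{1/p_0}$, whence $N\gamma_0^{1/(p_0\nu)}(|Du|^{p_0\mu})_{Q_{2R}}^{1/(p_0\mu)}\le N\gamma^{1/p_0}$ immediately. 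Second, avoid reusing the symbol $(t_1,x_1)$ for the shifted point, since in the statement of the lemma it already denotes the center of the support cylinder $Q_{R_0}(t_1,x_1)$.
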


\begin{proposition}
							\label{prop1218_1}
Let $\alpha \in (0,1)$, $T \in (0,\infty)$, and $p \in (1,\infty)$.
There exists $\gamma_0\in (0,1)$ depending only on $d$, $\delta$, $\alpha$, and $p$, such that, under Assumption \ref{assump2.2} ($\gamma_0$), the following hold.
Suppose that $u \in \cH_{p,0}^{\alpha,1}(\bR^d_T)$ vanishes for $(t,x) \notin Q_{R_0}(t_1,x_1)$ for some $(t_1,x_1)\in \bR^{d+1}$, and satisfies
\begin{equation*}
-\partial_t^\alpha u + D_i\left( a^{ij} D_j u \right) = D_i g_i
\end{equation*}
in $\bR^d_T$, where $g = (g_1,\ldots,g_d)$, $g_i \in L_p(\bR^d_T)$, $i=1,\ldots,d$.
Then
\begin{equation}
							\label{eq1207_04}
\|Du\|_{L_p(\bR_T)} \leq N \|g_i\|_{L_p(\bR_T)},
\end{equation}
where $N = N(d,\delta,\alpha,p)$.
\end{proposition}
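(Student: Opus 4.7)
The plan is to mirror the level-set / Hardy--Littlewood argument used for Theorem \ref{thm0412_1}, with Lemma \ref{lem6.3} in place of Lemma \ref{lem0409_1}. The new feature is the extra summand in $\cB(\textsf{s})$ involving $\gamma_0^{1/(p_0\nu)}(\cM|Du|^{p_0\mu})^{1/(p_0\mu)}$, which quantifies the partial BMO perturbation; it will be absorbed at the last step by choosing $\gamma_0 = \gamma_0(d,\delta,\alpha,p)$ sufficiently small, as permitted by the statement.

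\textbf{Parameter choice.} Given the target $p > 1$, I would first pick $p_0 \in (1,p)$ so close to $p$ that $p < p_1 := p_1(d,\alpha,p_0)$. This is possible for every $p > 1$ because \eqref{eq1029_01} provides $p_1 - p_0 \geq \alpha/(\alpha d + 1 - \alpha) =: c(d,\alpha) > 0$, so any $p_0 \in (\max\{1, p - c\}, p)$ works. I then fix $\mu \in (1, p/p_0)$, which forces $p_0\mu < p$ and determines $\nu = \mu/(\mu-1)$. By the compact support of $u$ in $Q_{R_0}(t_1,x_1)$ together with a standard density/truncation argument on $g$, I may regard $u \in \cH_{p_0,0}^{\alpha,1}(\bR^d_T)$, so Lemma \ref{lem6.3} applies (with this $p_0$ playing the role of its parameter $p$).

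\textbf{Level-set integration.} Lemma \ref{lem6.3} combined with the crawling-of-ink-spots lemma \cite[Lemma A.20]{arXiv:1806.02635} yields
\[
|\cA(\kappa\textsf{s})| \leq N(d,\alpha)\,\gamma\,|\cB(\textsf{s})|, \qquad \textsf{s} > 0,
\]
for $\kappa = \kappa(d,\delta,\alpha,p_0,\mu)$. Writing $\|Du\|_{L_p(\bR^d_T)}^p = p\kappa^p \int_0^\infty \textsf{s}^{p-1}|\cA(\kappa\textsf{s})|\,d\textsf{s}$, splitting $\cB(\textsf{s})$ into its three defining pieces, and applying the Hardy--Littlewood maximal function theorem on each (the parameter choices above give the required $p > p_0$ for the $\cM|g|^{p_0}$- and $\cS\cM|Du|^{p_0}$-pieces, and $p > p_0\mu$ for the $\cM|Du|^{p_0\mu}$-piece), the integrated inequality becomes
\[
\|Du\|_{L_p(\bR^d_T)}^p \leq N\gamma^{1-p/p_0}\|g\|_{L_p}^p + N\gamma^{1-p/p_0}\gamma_0^{p/(p_0\nu)}\|Du\|_{L_p}^p + N\gamma^{1-p/p_1}\|Du\|_{L_p}^p.
\]

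\textbf{Absorption and obstacle.} Since $1 - p/p_1 > 0$ by construction, I first pick $\gamma = \gamma(d,\delta,\alpha,p)$ so small that $N\gamma^{1-p/p_1} \leq 1/4$; with $\gamma$ now \emph{frozen}, the positive exponent $p/(p_0\nu)$ lets me pick $\gamma_0 = \gamma_0(d,\delta,\alpha,p)$ so small that $N\gamma^{1-p/p_0}\gamma_0^{p/(p_0\nu)} \leq 1/4$. Absorbing the two $\|Du\|_{L_p}^p$-terms on the left then produces \eqref{eq1207_04}. The principal obstacle is exactly this order of choice: the exponent $1 - p/p_0$ is \emph{negative}, so $\gamma^{1-p/p_0} \to \infty$ as $\gamma \to 0$, and the $\gamma_0$-term would blow up if $\gamma_0$ were fixed before $\gamma$; hence $\gamma$ must be frozen first, which is why the final $\gamma_0$ depends on $p$. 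A minor secondary point is verifying $u \in \cH_{p_0,0}^{\alpha,1}$ despite $p_0 < p$, which is handled by the compact support of $u$ and the density reduction on $g$ noted above.
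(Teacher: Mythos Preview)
Your proposal is correct and follows essentially the same approach as the paper: choose $p_0$ and $\mu$ so that $p_0 < p_0\mu < p < p_1$, apply Lemma \ref{lem6.3} together with the crawling-of-ink-spots lemma to get \eqref{eq8.46}, integrate via \eqref{eq8.47} and the Hardy--Littlewood maximal theorem, then absorb by first fixing $\gamma$ (using $1-p/p_1>0$) and afterward $\gamma_0$. Your explicit discussion of why the order of choosing $\gamma$ before $\gamma_0$ matters is exactly the point, and the remark that the compact support of $u$ yields $u\in\cH_{p_0,0}^{\alpha,1}$ is a harmless clarification the paper leaves implicit.
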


\begin{proof}
We take $p_0\in (1,p)$ and $\mu\in (1,\infty)$ depending only on $p$ such that $p_0<p_0\mu<p<p_1$, where $p_1=p_1(d,\alpha,p_0)$ is taken from Proposition \ref{prop0515_1}. By Lemma \ref{lem6.3} and \cite[Lemma A.20]{arXiv:1806.02635}, we have \eqref{eq8.46}, which together with \eqref{eq8.47} and the Hardy-Littlewood maximal function theorem implies that
{\small \begin{align*}
&\|Du\|_{L_p(\bR^d_T)}^p
\leq N p  \kappa^p \gamma \int_0^\infty |\cB(\textsf{s})| \textsf{s}^{p-1} \, d\textsf{s}\\
&\le N\gamma \int_0^\infty\left|\left\{ (t,x) \in (-\infty,T) \times \bR^d:\gamma^{-\frac 1 {p_1}}\left( \cS\cM |D u|^{p_0}(t,x)\right)^{\frac 1 {p_0}} > \frac{\textsf{s}}{3} \right\}\right| \textsf{s}^{p-1} \, d\textsf{s}\\
&\ +
N\gamma \int_0^\infty\left|\left\{ (t,x) \in (-\infty,T) \times \bR^d:\gamma^{-\frac 1 {p_0}}\left( \cM |g|^{p_0} (t,x) \right)^{\frac 1 {p_0}} > \frac{\textsf{s}}{3} \right\}\right| \textsf{s}^{p-1} \, d\textsf{s}\\
&\ +
N\gamma \int_0^\infty\left|\left\{ (t,x) \in (-\infty,T) \times \bR^d:\gamma^{-\frac 1 {p_0}}\gamma_0^{\frac 1 {p_0\nu}}
\left( \cM |D u|^{p_0\mu} (t,x) \right)^{\frac 1 {p_0\mu}} > \frac{\textsf{s}}{3} \right\}\right| \textsf{s}^{p-1} \, d\textsf{s}\\
&\leq N (\gamma^{1-p/p_1}+\gamma^{1-p/p_0}\gamma_0^{p/(p_0\nu)}) \|Du\|^p_{L_p(\bR^d_T)} + N \gamma^{1-p/p_0} \|g\|^p_{L_p(\bR^d_T)},
\end{align*}}
where $N = N(d,\delta,\alpha,p)$.
Now choose $\gamma \in (0,1)$ sufficiently small and then $\gamma_0$ sufficiently small, depending only on $d$, $\delta$, $\alpha$, and $p$, so that
$$
N (\gamma^{1-p/p_1}+\gamma^{1-p/p_0}\gamma_0^{p/(p_0\nu)}) < 1/2.
$$
Then we obtain \eqref{eq1207_04}.
\end{proof}

Finally, we give the proof of Theorem \ref{main_thm} after the following two lemmas.

\begin{lemma}
							\label{lem1218_01}
Let $\lambda > 0$, $\alpha \in (0,1)$, $T \in (0,\infty)$, and $p \in (1,\infty)$. There exists $\gamma_0\in (0,1)$ depending only on $d$, $\delta$, $\alpha$, and $p$, such that, under Assumption \ref{assump2.2} ($\gamma_0$), the following hold.
Suppose that $u \in \cH_{p,0}^{\alpha,1}(\bR^d_T)$ vanishes for $(t,x) \notin Q_{R_0/\sqrt{2}}(t_1,x_1)$ for some $(t_1,x_1)\in \bR^{d+1}$ and satisfies
$$
-\partial_t^\alpha u + D_i \left( a^{ij} D_j u \right) - \lambda u = D_i g_i + f
$$
in $\bR^d_T$, where $g_i, f \in L_p(\bR^d_T)$.
Then
$$
\sqrt{\lambda}\|u\|_{L_p(\bR^d_T)} + \|D u\|_{L_p(\bR^d_T)} \leq N \|g\|_{L_p(\bR^d_T)} + \frac{N}{\sqrt{\lambda}}\|f\|_{L_p(\bR^d_T)},
$$
provided that $\lambda \geq \lambda_0 > 0$, where $N = N(d,\delta,\alpha,p)$ and $\lambda_0 = \lambda_0(d,\delta,\alpha,p,R_0)$.
\end{lemma}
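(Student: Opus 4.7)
The approach is S. Agmon's dimension-lifting trick, exactly as invoked in the proof of Theorem~\ref{thm0412_1} via \cite[Lemma~5.5]{MR2304157}: adding one spatial variable $y\in\bR$ converts the $-\lambda u$ term into a $\partial_y^2$ term, and Proposition~\ref{prop1218_1} in $\bR^{d+1}_T$ (the $\lambda=0$ estimate) takes over. Specifically, I would fix $\zeta\in C_0^\infty(\bR)$ with $\operatorname{supp}\zeta\subset(-R_0/\sqrt{2},R_0/\sqrt{2})$ and set $\tilde u(t,x,y):=u(t,x)\,\zeta(y)\cos(\sqrt\lambda\,y)$. Augmenting the coefficient matrix by $\tilde a^{ij}=a^{ij}(t,x)$ for $i,j\le d$, $\tilde a^{d+1,d+1}=1$, and zero cross-terms (independent of $y$) preserves Assumption~\ref{assump2.2}$(\gamma_0)$ with the same $\gamma_0$. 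The support hypothesis on $u$ together with the choice of $\zeta$ places $\tilde u$ inside $Q_{R_0}(t_1,(x_1,0))\subset\bR^{d+1}_T$. A direct computation using $\partial_y^2\cos(\sqrt\lambda y)=-\lambda\cos(\sqrt\lambda y)$, which cancels the $\pm\lambda\tilde u$ contributions, shows that $\tilde u\in\cH_{p,0}^{\alpha,1}(\bR^{d+1}_T)$ satisfies the $\lambda$-free divergence-form equation
\[
-\partial_t^\alpha\tilde u+\sum_{i=1}^{d+1}D_i(\tilde a^{ij}D_j\tilde u)=D_i\bigl(g_i\zeta\cos(\sqrt\lambda y)\bigr)+E,
\]
with $E=f\zeta\cos(\sqrt\lambda y)+u\zeta''\cos(\sqrt\lambda y)-2\sqrt\lambda\,u\zeta'\sin(\sqrt\lambda y)$.

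To rewrite $E$ as $D_y\tilde G_{d+1}$ modulo lower-order residuals, I would use the identity $D_y(2u\zeta'\cos(\sqrt\lambda y))=2u\zeta''\cos-2\sqrt\lambda u\zeta'\sin$ to reduce to the two scalar pieces $f\zeta\cos$ and $-u\zeta''\cos$. These are handled via the compactly supported almost-antiderivative
\[
A_\lambda(y):=\int_{-\infty}^{y}\zeta(s)\cos(\sqrt\lambda s)\,ds-\chi(y)\int_{\bR}\zeta(s)\cos(\sqrt\lambda s)\,ds,
\]
where $\chi$ is a fixed monotone cutoff with $\chi(-\infty)=0$, $\chi(+\infty)=1$, and $\operatorname{supp}\chi'\subset[-R_0/\sqrt{2},R_0/\sqrt{2}]$. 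Repeated integration by parts gives $\|A_\lambda\|_{L_\infty}\le N\lambda^{-1/2}$ and $|\int\zeta\cos|\le N\lambda^{-k/2}$ for every $k$, so $f\zeta\cos=D_y(fA_\lambda)+f\chi'\int\zeta\cos$ with $\|fA_\lambda\|_{L_p(\bR^{d+1}_T)}\le NR_0^{1/p}\lambda^{-1/2}\|f\|_{L_p(\bR^d_T)}$; the non-divergence residual is of size $O(\lambda^{-k/2})$ and can itself be iteratively written as a $D_y$-derivative at the cost of further negative powers of $\sqrt\lambda$. The same device applied to $-u\zeta''\cos$ contributes a term of size $N(R_0)\lambda^{-1/2}\|u\|_{L_p(\bR^d_T)}$ to $\|\tilde G_{d+1}\|_{L_p}$.

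Finally, Proposition~\ref{prop1218_1} applied to $\tilde u$ in $\bR^{d+1}_T$ yields $\|D\tilde u\|_{L_p(\bR^{d+1}_T)}\le N\sum_{i=1}^{d+1}\|\tilde G_i\|_{L_p}$. For $\sqrt\lambda R_0$ sufficiently large, equidistribution of $\cos(\sqrt\lambda y)$ and $\sin(\sqrt\lambda y)$ on $\operatorname{supp}\zeta$ yields the lower bound
\[
\|D\tilde u\|_{L_p(\bR^{d+1}_T)}^p\ge c_pR_0\bigl(\|Du\|_{L_p(\bR^d_T)}^p+\lambda^{p/2}\|u\|_{L_p(\bR^d_T)}^p\bigr),
\]
while $\sum_{i=1}^{d+1}\|\tilde G_i\|_{L_p(\bR^{d+1}_T)}\le NR_0^{1/p}\bigl(\|g\|_{L_p}+\lambda^{-1/2}\|f\|_{L_p}\bigr)+N(R_0)\lambda^{-1/2}\|u\|_{L_p(\bR^d_T)}$. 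Taking $p$-th roots, dividing by $R_0^{1/p}$, and choosing $\lambda_0=\lambda_0(d,\delta,\alpha,p,R_0)$ large enough that the residual $N(R_0)\lambda^{-1/2}\|u\|_{L_p}$ is absorbed into $\tfrac{1}{2}\sqrt\lambda\|u\|_{L_p}$ on the left gives the claimed estimate. The main obstacle is the bookkeeping just described: because $\int\zeta(y)\cos(\sqrt\lambda y)\,dy$ does not in general vanish, the naive antiderivative of $f\zeta\cos$ fails to have compact support, and one must iteratively absorb the resulting non-divergence residuals using the rapid decay (faster than any polynomial) of this oscillatory integral under repeated integration by parts.
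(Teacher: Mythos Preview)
Your approach via Agmon's dimension-lifting trick is exactly what the paper does (its proof is the one line ``repeat the proof of Lemma~5.5 in \cite{MR2304157} using Proposition~\ref{prop1218_1}''), and most of your bookkeeping is correct: extracting $D_y(2u\zeta'\cos(\sqrt\lambda y))$, and then bounding the almost-antiderivatives of both $\zeta\cos(\sqrt\lambda\,\cdot)$ and $\zeta''\cos(\sqrt\lambda\,\cdot)$ by $N\lambda^{-1/2}$ (the latter by integrating the cosine rather than $\zeta''$) are the right moves. The lower bound via equidistribution and the absorption of the $N(R_0)\lambda^{-1/2}\|u\|$ terms for large $\lambda$ are also fine.

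The gap is in your handling of the final residual. After the $A_\lambda$ step the remaining non-divergence term is
\[
r(t,x,y)=c_\lambda\bigl(f(t,x)+\lambda u(t,x)\bigr)\chi'(y),\qquad c_\lambda=\int_{\bR}\zeta(s)\cos(\sqrt\lambda s)\,ds.
\]
You assert that $r$ ``can itself be iteratively written as a $D_y$-derivative at the cost of further negative powers of $\sqrt\lambda$,'' but this is false: $\chi'$ carries no oscillatory factor, and since $\int_\bR\chi'=1$, no antiderivative of $\chi'$ lies in $L_p(\bR)$. The rapid decay of $c_\lambda$ makes $\|r\|_{L_p}$ small, but smallness is irrelevant here---Proposition~\ref{prop1218_1} admits \emph{only} a right-hand side in pure divergence form $D_i\tilde g_i$, so the presence of any nonzero $r$ blocks its application.

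The clean fix is to make $r\equiv 0$ by choosing $\zeta$ (depending on $\lambda$, with derivatives bounded uniformly in $\lambda$) so that $c_\lambda=0$. For instance, fix an even $\zeta_0\in C_0^\infty\bigl((-R_0/4,R_0/4)\bigr)$ and set $\zeta(y)=\zeta_0(y-a)$; then
\[
\int_\bR\zeta(y)\cos(\sqrt\lambda y)\,dy=\cos(\sqrt\lambda a)\int_\bR\zeta_0\cos(\sqrt\lambda\,\cdot),
\]
which vanishes for $a=\pi/(2\sqrt\lambda)$. For $\lambda\ge\lambda_0(R_0)$ the translate stays inside $(-R_0/\sqrt2,R_0/\sqrt2)$, and all of your estimates (being translation-invariant in $y$) hold with the same constants. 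With $c_\lambda=0$, the entire error $E$ has vanishing $y$-integral, so $G(t,x,y)=\int_{-\infty}^y E(t,x,s)\,ds$ is compactly supported in $y$, $E=D_yG$ exactly, and Proposition~\ref{prop1218_1} applies as you intended.
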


\begin{proof}
We repeat the proof of Lemma 5.5 in \cite{MR2304157} by using Proposition \ref{prop1218_1} and S. Agmon's idea.
In particular, using  a scaling argument, one can check that $N$ can be chosen independent of $R_0$.
\end{proof}

\begin{lemma}
							\label{lem1221_1}
Let $\lambda > 0$, $\alpha \in (0,1)$, $T \in (0,\infty)$, $p \in (1,\infty)$, and the lower-order coefficients $a^i, b^k, c$ satisfy the assumption \eqref{eq1221_01}.
There exists $\gamma_0\in (0,1)$ depending only on $d$, $\delta$, $\alpha$, and $p$, such that, under Assumption \ref{assump2.2} ($\gamma_0$), the following hold.
If $u \in \cH_{p,0}^{\alpha,1}(\bR^d_T)$ satisfies
$$
-\partial_t^\alpha u + D_i \left( a^{ij} D_j u + a^i u \right) + b^i D_i u + c u - \lambda u = D_i g_i + f
$$
in $\bR^d_T$, where $g_i, f \in L_p(\bR^d_T)$, then
\begin{equation}
							\label{eq1222_01}
\sqrt{\lambda}\|u\|_{L_p(\bR^d_T)} + \|D u\|_{L_p(\bR^d_T)} \leq N \|g\|_{L_p(\bR^d_T)} + \frac{N}{\sqrt{\lambda}}\|f\|_{L_p(\bR^d_T)},
\end{equation}
provided that $\lambda \geq \max\{T^{1-\alpha}\lambda_0, \lambda_0\} > 0$, where $N = N(d,\delta,\alpha,p)$ and $\lambda_0$ depends only on $d$, $\delta$, $\alpha$, $p$, $K$, and $R_0$.
\end{lemma}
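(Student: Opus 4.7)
The strategy is to localize $u$ by a space-time partition of unity, reducing to the compactly supported case treated in Lemma~\ref{lem1218_01}, while simultaneously absorbing the lower-order terms by taking $\lambda$ large.

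First I would rewrite the equation with the lower-order terms on the right,
\[
-\partial_t^\alpha u + D_i(a^{ij}D_j u) - \lambda u = D_i(g_i - a^i u) + (f - b^i D_i u - c u),
\]
and fix a smooth partition of unity $\{\zeta_n\}$ on $(0,T)\times\bR^d$ with bounded overlap, each $\zeta_n(t,x)=\eta_n(t)\psi_n(x)$ supported in a parabolic cylinder of size $\sigma:=R_0/(2\sqrt{2})$, satisfying $\sum_n\zeta_n^p\asymp 1$, $|D_x\zeta_n|\le N/\sigma$, and $|\eta_n'|\le N/\sigma^{2/\alpha}$. Each $u\zeta_n\in\cH_{p,0}^{\alpha,1}(\bR^d_T)$ then vanishes outside a cylinder $Q_{R_0/\sqrt{2}}(\tilde t_n,x_n)$, so Lemma~\ref{lem1218_01} is applicable to it.

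Next I would write out the equation satisfied by $u\zeta_n$. Combining the spatial product rule with the commutator identity $\partial_t^\alpha(\eta_n u)=\eta_n\partial_t^\alpha u - F_n$ from Lemma~\ref{lem0207_1}, one obtains
\[
-\partial_t^\alpha(u\zeta_n)+D_i\!\left(a^{ij}D_j(u\zeta_n)\right)-\lambda(u\zeta_n)=D_i\cG_{i,n}+\cF_n,
\]
where $\cG_{i,n}$ collects $g_i\zeta_n$, $a^{ij}uD_j\zeta_n$, and $-a^iu\zeta_n$, while $\cF_n$ collects $f\zeta_n$, $-g_iD_i\zeta_n$, $a^{ij}D_juD_i\zeta_n$, $(a^i+b^i)uD_i\zeta_n$, $-b^iD_i(u\zeta_n)$, $-c(u\zeta_n)$, and the commutator piece $\psi_nF_n$. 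Lemma~\ref{lem1218_01} gives, uniformly in $n$,
\[
\sqrt{\lambda}\,\|u\zeta_n\|_{L_p}+\|D(u\zeta_n)\|_{L_p}\le N\|\cG_{i,n}\|_{L_p}+\frac{N}{\sqrt{\lambda}}\|\cF_n\|_{L_p},
\]
with $N=N(d,\delta,\alpha,p)$. Then I would raise this to the $p$-th power, sum over $n$, and use bounded overlap: $\sum_n\|u\zeta_n\|_{L_p}^p\asymp\|u\|_{L_p}^p$, and $\sum_n\|D(u\zeta_n)\|_{L_p}^p$ is comparable to $\|Du\|_{L_p}^p+\sigma^{-p}\|u\|_{L_p}^p$. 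On the right, the spatial cross terms contribute $\|g\|_{L_p}^p$, $\sigma^{-p}\|u\|_{L_p}^p$, and, through the lower-order coefficients, $K^p(\|u\|_{L_p}^p+\|Du\|_{L_p}^p)$; by Lemma~\ref{lem0207_1} applied with $M\sim\sigma^{-2/\alpha}$, the commutator contribution is controlled by $\sum_n\|\psi_nF_n\|_{L_p}^p\le NT^{p(1-\alpha)}\sigma^{-2p/\alpha}\|u\|_{L_p}^p$. Choosing $\lambda_0=\lambda_0(d,\delta,\alpha,p,K,R_0)$ sufficiently large, the $\|Du\|_{L_p}$ terms appearing with factor $1/\sqrt{\lambda}$ are absorbed by $\tfrac12\|Du\|_{L_p}$ on the left, the $\|u\|_{L_p}$ terms by $\tfrac12\sqrt{\lambda}\|u\|_{L_p}$, and the additional threshold $\lambda\ge T^{1-\alpha}\lambda_0$ absorbs the commutator contribution, yielding \eqref{eq1222_01}.

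\emph{Main obstacle.} The hard part is the nonlocality of $\partial_t^\alpha$: cutting $u$ off in time unavoidably generates a remainder $F_n$ supported throughout $(0,T)$, whose $L_p$-norm carries a $T$-dependent factor. Tracking this dependence accurately through the summation in order to match the hypothesis $\lambda\ge\max\{T^{1-\alpha}\lambda_0,\lambda_0\}$ in the statement, rather than needing $\lambda$ to blow up as $T\to\infty$, is the most delicate bookkeeping step; all other terms are controlled by routine absorption.
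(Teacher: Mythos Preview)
Your proposal is correct and follows essentially the same approach as the paper: localize by a space--time partition of unity with pieces supported in cylinders of radius comparable to $R_0$, apply Lemma~\ref{lem1218_01} to each localized function, sum using bounded overlap, control the time--commutator $F_n$ via Lemma~\ref{lem0207_1} with the explicit $T^{1-\alpha}$ factor, and absorb for large $\lambda$. The only cosmetic difference is that the paper first carries out the partition-of-unity argument assuming $a^i=b^i=c=0$ and only afterward moves the lower-order terms to the right-hand side, whereas you incorporate them from the start; both orderings lead to the same absorption, and your identification of the $T^{1-\alpha}$ commutator bound as the reason for the threshold $\lambda\ge\max\{T^{1-\alpha}\lambda_0,\lambda_0\}$ is exactly right.
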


\begin{proof}
We first assume that $a^i = b^i = c = 0$.
To use a partition of unity argument, we find sequences of $\{t_k,x_k\} \subset (0,T] \times \bR^d$ and $\{\eta_k(t)\zeta_k(x)\}$ such that,
$$
\eta_k(t)\zeta_k(x) \geq 0, \quad
\eta_k(t)\zeta_k(x) \in C_0^\infty(\bR^{d+1}), \quad \operatorname{supp} (\eta_k(t)\zeta_k(x)) \subset Q_{R_0/\sqrt{2}}(t_k,x_k),
$$
and, for $(t,x) \in \bR^d_T$,
\begin{equation}
							\label{eq1221_03}
\begin{aligned}
1 \leq & \sum_{k=1}^\infty |\eta_k(t)\zeta_k(x)|^p \leq \chi_0, \quad \sum_{k=1}^\infty |\zeta_k(x)|^p \leq \chi_0,
\\
&\sum_{k=1}^\infty |\eta_k(t) D_x\zeta_k(x)|^p \leq \chi_1,	
\end{aligned}
\end{equation}
where $\chi_0$ depends only on $p$, and $\chi_1$ depends only on $d$, $\alpha$, $p$, and $R_0$.
Note that $u_k(t,x) : = u(t,x) \eta_k(t)\zeta_k(x)$ satisfies
\begin{multline}
							\label{eq1221_02}
-\partial_t^\alpha u_k + D_i \left(a^{ij} D_j u_k \right) - \lambda u_k = D_i \left(g_i \eta_k\zeta_k + a^{ij} u \eta_k D_j \zeta_k \right)
\\
+ f \eta_k\zeta_k + F_k \zeta_k  + a^{ij} (D_j u) \eta_k D_i\zeta_k - g_i \eta_k D_i \zeta_k
\end{multline}
in $\bR^d_T$, where
$$
F_k(t,x) = \frac{\alpha}{\Gamma(1-\alpha)} \int_0^t (t-s)^{-\alpha-1} \left( \eta_k (s) - \eta_k (t) \right) u(s,x) \, ds.
$$
As in Lemma \ref{lem0207_1}, we see that
$$
\|F_k\|_{L_p(\bR^d_T)} \leq N T^{1-\alpha}\|u\|_{L_p(\bR^d_T)},
$$
where $N$ depends on $\alpha$, $p$, and $R_0$. See also Lemma A.2 in \cite{arXiv:1806.02635}.
By applying Lemma \ref{lem1218_01} to \eqref{eq1221_02}, summing in $k$, and using the inequalities in \eqref{eq1221_03}, we obtain that
$$
\sqrt{\lambda}^p \|u\|_{L_p(\bR^d_T)}^p + \|Du\|_{L_p(\bR^d_T)}^p \leq N_0 \|g\|_{L_p(\bR^d_T)}^p + N_1 \|u\|_{L_p(\bR^d_T)}^p + \frac{N_0}{\sqrt{\lambda}^p} \|f\|_{L_p(\bR^d_T)}^p
$$
$$
+ \frac{N_1 T^{(1-\alpha)p}}{\sqrt{\lambda}^p} \|u\|_{L_p(\bR^d_T)}^p + \frac{N_1}{\sqrt{\lambda}^p}\|Du\|_{L_p(\bR^d_T)}^p + \frac{N_1}{\sqrt{\lambda}^p}\|g\|_{L_p(\bR^d_T)}^p,
$$
where $N_0 = N_0(d, \delta,\alpha,p)$ and $N_1=N_1(d,\delta,\alpha,p,R_0)$.
Then we choose a sufficiently large $\lambda_0$ depending only on $d$, $\delta$, $\alpha$, $p$, and $R_0$ to obtain the estimate \eqref{eq1222_01}.
Precisely, we choose $\lambda_0$ so that, for $\lambda \geq \max\{T^{1-\alpha}\lambda_0, \lambda_0\} > 0$, we have
$$
\frac{\sqrt{\lambda}^p}{2} \leq \sqrt{\lambda}^p - N_1 - \frac{N_1 T^{(1-\alpha)p}}{\sqrt{\lambda}^p}.
$$

In the general case, we move the lower-order terms to the right-hand side as
$$
-\partial_t^\alpha u + D_i \left( a^{ij} D_j u \right) - \lambda u = D_i (g_i - a^iu) + f - b^i D_i u - cu.
$$
Then we apply the estimate just obtained above.
In this case, the choice of $\lambda_0$ depends also on $K$. \end{proof}

\begin{proof}[Proof of Theorem \ref{main_thm}]
As above, we only prove the estimate \eqref{eq0411_04c}.
We first deal with the case $\Omega = \bR^d$.

For $u \in \cH_{p,0}^{\alpha,1}(\bR^d_T)$ satisfying \eqref{eq0411_03c}, we consider the equation in $(0,\tau) \times \bR^d$, where $\tau \in (0,T]$.
By adding $-\lambda u$ to both sides of the equation, we have
$$
-\partial_t^\alpha u + D_i \left( a^{ij} D_j u + a^i u \right) + b^i D_i u + c u -\lambda u = D_i g_i + f - \lambda u
$$
in $\bR^d_\tau$.
By applying Lemma \ref{lem1221_1} to the above equation, we have $\lambda_0$ depending only on $d$, $\delta$, $\alpha$, $p$, $K$, and $R_0$ such that the following estimate holds provided that $\lambda \geq \max\{\tau^{1-\alpha}\lambda_0,\lambda_0\}$.
\begin{equation}
							\label{eq1222_02}
\sqrt{\lambda}\|u\|_{L_p(\bR^d_\tau)} + \|D u\|_{L_p(\bR^d_\tau)} \leq N \|g\|_{L_p(\bR^d_\tau)} + \frac{N}{\sqrt{\lambda}}\|f\|_{L_p(\bR^d_\tau)} + N \sqrt{\lambda}\|u\|_{L_p(\bR^d_\tau)},
\end{equation}
where $N = N(d,\delta,\alpha,p)$.
Now we fix $\lambda_1 = \max\{T^{1-\alpha}\lambda_0, \lambda_0, 1\}$.
Then the estimate \eqref{eq1222_02} holds with this $\lambda_1$ in place of $\lambda$ for all $\tau \in (0,T]$.
This implies that
\begin{equation}
							\label{eq1209_06}
\|u\|_{L_p(\bR^d_\tau)} + \|Du\|_{L_p(\bR^d_\tau)} \leq N \|g\|_{L_p(\bR^d_\tau)} + N \|f\|_{L_p(\bR^d_\tau)} + N_1 \|u\|_{L_p(\bR^d_\tau)}
\end{equation}
for any $\tau \in (0,T]$,
where, in particular, $N$ and $N_1$ are independent of $\tau$. ($N_1$ may depend on $d$, $\delta$, $\alpha$, $p$, $K$, $R_0$, and $T$.)
Thus, to complete the proof of \eqref{eq0411_04c}, it only remains to prove
\begin{equation}
							\label{eq1209_04}
\|u\|_{L_p(\bR^d_T)}\le N\|g\|_{L_p(\bR^d_T)} + N\|f\|_{L_p(\bR^d_T)}.
\end{equation}
We first prove \eqref{eq1209_04} for $p \geq 2$.
In this case, by moving the lower-order terms to the right-hand side, from Lemma \ref{lem0904_1} and the  estimate \eqref{eq1209_06}, we obtain that, for any $\tau \in (0,T]$,
\begin{multline}
							\label{eq1209_05}
\sup_{0 < t < \tau}\|I_0^{1-\alpha}|u|^p(t,\cdot)\|_{L_1(\bR^d)} \leq N (\tau^{\alpha(p-2)/2} + \tau^{\alpha(p-1)})
\|g\|^p_{L_p(\bR^d_\tau)}
\\
+ N \tau^{\alpha(p-1)} \|f\|^p_{L_p(\bR^d_\tau)} + N_1 (\tau^{\alpha(p-2)/2} + \tau^{\alpha(p-1)}) \|u\|^p_{L_p(\bR^d_\tau)}.
\end{multline}
Note that, for $0 \leq \tau_0 < \tau_1 \leq T$,
\begin{multline}
							\label{eq1209_03}
 \int_{\bR^d} \int_{\tau_0}^{\tau_1} |u(s,x)|^p \, ds \,dx \leq \int_{\bR^d}   (\tau_1-\tau_0)^\alpha \int_0^{\tau_1} (\tau_1 - s)^{-\alpha} |u(s,x)|^p \, ds \, dx
\\
= (\tau_1 - \tau_0)^\alpha \Gamma(1-\alpha) \int_{\bR^d} I_0^{1-\alpha} |u|^p(\tau_1,x) \, dx.
\end{multline}
Take a sufficiently large integer $m=m(d,\delta,\alpha,p, K, T,R_0)$ such that $$
N_1 (\tau^{\alpha(p-2)/2} + \tau^{\alpha(p-1)})(T/m)^{\alpha} \leq N_1 (T^{\alpha(p-2)/2} + T^{\alpha(p-1)})(T/m)^{\alpha} \le \frac{1}{2\Gamma(1-\alpha)}
$$
for any $\tau \in (0,T]$.
Then for any $j=0,2,\ldots,m-1$, by using \eqref{eq1209_03} and \eqref{eq1209_05} we have
$$
\int_{jT/m}^{(j+1)T/m} \int_{\bR^d} |u(s,x)|^p \, dx \, ds  \leq \left(\frac{T}{m}\right)^\alpha \Gamma(1-\alpha) \| I_0^{1-\alpha}|u|^p((j+1)T/m,\cdot)\|_{L_1(\bR^d)}
$$
$$
\leq \left(\frac{T}{m}\right)^\alpha \Gamma(1-\alpha) \sup_{t \in (0,(j+1)T/m)} \|I_0^{1-\alpha}|u|^p(t,\cdot)\|_{L_1(\bR^d)}
$$
$$
\leq N\|g\|^p_{L_p(\bR^d_T)} + N \|f\|^p_{L_p(\bR^d_T)} + \frac{1}{2} \|u\|_{L_p((0,(j+1)T/m);L_p(\bR^d))}^p.
$$
This implies that
$$
\|u\|_{L_p((jT/m,(j+1)T/m);L_p(\bR^d))}
\le N\|g\|_{L_p(\bR^d_T)} + N\|f\|_{L_p(\bR^d_T)}+\|u\|_{L_p((0,jT/m);L_p(\bR^d))}.
$$
By an induction on $j$, we obtain \eqref{eq1209_04} for $p \geq 2$.
To prove \eqref{eq1209_04} for $p \in (1,2)$, we use the duality argument as in the proof of Theorem \ref{thm0412_1}.
Note that when considering the dual equation in $(-T,0) \times \bR^d$, we take the even extension of $a^{ij}$ with respect to $t=0$.

To prove the case $\Omega = \bR^d_+$, we use the method of odd/even extensions; see, for instance, the proof of Theorem 2.4 in \cite{MR2764911}.
\end{proof}

\appendix

\section{Sobolev embeddings for $\bH_{p,0}^{\alpha,1}$}

\begin{theorem}[Embedding with $\alpha$-time derivative and $1$-spatial derivatives for $p < \min\{1/\alpha,d\}$]
							\label{thm1204_1}
Let $\alpha \in (0, 1)$ and $p, q \in (1,\infty)$ satisfy
$$
p < \min\{1/\alpha, d\}, \quad p < q < q^* := \frac{1/\alpha + d}{1/(\alpha p) + d/p - 1}.
$$
Then
\begin{equation}
							\label{eq0213_01}
\|\psi\|_{L_q(\bR^d_T)} \le N  \|D_x \psi\|_{L_p(\bR^d_T)}^{\theta} \|\partial_t^\alpha \psi \|_{L_p(\bR^d_T)}^{\tau(1-\theta)} \|\psi\|_{L_p(\bR^d_T)}^{(1-\tau)(1-\theta)}
\end{equation}
for $\psi \in \bH_{p,0}^{\alpha,1}(\bR^d_T)$, where
$$
\theta = d \left(\frac{1}{p} - \frac{1}{q}\right) \in (0,1), \quad \tau = \frac{1}{\alpha d} \frac{\theta}{1-\theta} \in (0,1),
$$
and
$N = N(d,\alpha,p,q)$, but independent of $T$.
If $q = q^*$, then
\begin{equation}
							\label{eq0213_02}
\|\psi\|_{L_q(\bR^d_T)} \le N  \|D_x \psi\|_{L_p(\bR^d_T)}^{\alpha d/(1+\alpha d)} \|\partial_t^\alpha \psi \|_{L_p(\bR^d_T)}^{1/(1+\alpha d)}.
\end{equation}
\end{theorem}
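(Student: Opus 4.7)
I would first establish the endpoint case $q = q^*$, namely \eqref{eq0213_02}, and then deduce \eqref{eq0213_01} for $p < q < q^*$ by a single application of H\"older's interpolation between $L_p$ and $L_{q^*}$. With $\mu \in (0,1)$ defined by $1/q = (1-\mu)/p + \mu/q^*$, a direct computation using the identity $1/p - 1/q^* = \alpha/(1+\alpha d)$ yields $\mu = (1/p-1/q)(1+\alpha d)/\alpha$, and then $\mu\cdot\alpha d/(1+\alpha d) = \theta$ and $\mu/(1+\alpha d) = \theta/(\alpha d) = \tau(1-\theta)$, which matches the exponents in \eqref{eq0213_01}.

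For the endpoint case, the strategy is to decouple the spatial and temporal parts of $\psi$ using two classical one-variable estimates and recombine them through a mixed-norm H\"older. Since $\psi \in \bH_{p,0}^{\alpha,1}(\bR^d_T)$ has zero initial trace (built into the definition of the space), we have $\psi = I_0^\alpha(\partial_t^\alpha\psi)$ pointwise. Fixing $x$, the one-dimensional Hardy--Littlewood--Sobolev inequality for the Riesz potential on $(0,\infty)$ --- applicable because $p < 1/\alpha$ forces $\alpha p < 1$ --- yields, after zero-extension in $t$ and restriction back to $(0,T)$,
$$
\|\psi(\cdot, x)\|_{L_r((0,T))} \le N \|\partial_t^\alpha\psi(\cdot, x)\|_{L_p((0,T))}, \qquad r := p/(1-\alpha p),
$$
with $N$ independent of $T$, while the classical spatial Sobolev embedding (applicable because $p < d$) gives
$$
\|\psi(t,\cdot)\|_{L_{p^*}(\bR^d)} \le N \|D_x\psi(t,\cdot)\|_{L_p(\bR^d)}, \qquad p^* := dp/(d-p).
$$

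One checks $p < q^* < p^*$, so H\"older's interpolation on each spatial slice yields $\|\psi(t,\cdot)\|_{L_{q^*}} \le \|\psi(t,\cdot)\|_{L_{p^*}}^{\kappa}\|\psi(t,\cdot)\|_{L_p}^{1-\kappa}$ with $\kappa := \alpha d/(1+\alpha d)$. Raising to the $q^*$-th power, integrating in $t$, and applying H\"older in $t$ with exponents chosen so that the two integrands become $\|\psi(t,\cdot)\|_{L_{p^*}}^{p}$ and $\|\psi(t,\cdot)\|_{L_p}^{r}$ --- the requisite conjugacy condition $\kappa q^*/p + (1-\kappa)q^*/r = 1$ reducing, after substituting the values of $q^*$ and $r$, exactly to the same $\kappa = \alpha d/(1+\alpha d)$ --- gives
$$
\|\psi\|_{L_{q^*}(\bR^d_T)} \le \|\psi\|_{L_p((0,T);\,L_{p^*}(\bR^d))}^{\kappa}\|\psi\|_{L_r((0,T);\,L_p(\bR^d))}^{1-\kappa}.
$$
The first mixed norm is bounded by $N\|D_x\psi\|_{L_p(\bR^d_T)}^{\kappa}$ via the spatial Sobolev embedding; for the second, Minkowski's integral inequality (valid since $r \ge p$) lets us swap the order to $\|\psi\|_{L_p(\bR^d;\,L_r((0,T)))}$, after which the temporal HLS bounds it by $N\|\partial_t^\alpha\psi\|_{L_p(\bR^d_T)}^{1-\kappa}$. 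This produces \eqref{eq0213_02}.

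The main delicate point is the exponent bookkeeping: the H\"older conjugacy used in the temporal integration and the H\"older interpolation used on each spatial slice both end up forcing the same $\kappa = \alpha d/(1+\alpha d)$, which reflects the scaling-critical status of $q^*$ for the pair $(D_x, \partial_t^\alpha)$ on $L_p(\bR^d_T)$. All the above manipulations are carried out first on a smooth approximating sequence with vanishing initial trace (furnished by the definition of $\bH_{p,0}^{\alpha,1}$), and a routine density argument then extends \eqref{eq0213_02} to arbitrary $\psi \in \bH_{p,0}^{\alpha,1}$.
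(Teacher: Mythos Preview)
Your proposal is correct and follows essentially the same approach as the paper: both arguments combine the spatial Sobolev embedding $L_p \to L_{pd/(d-p)}$ with the temporal fractional-integration estimate $\|\psi(\cdot,x)\|_{L_{p/(1-\alpha p)}} \le N\|\partial_t^\alpha\psi(\cdot,x)\|_{L_p}$ (the paper cites this as \cite[Lemma~A.7]{arXiv:1806.02635}, you phrase it as one-sided HLS), swap the mixed norm via Minkowski, and glue the two pieces together by H\"older to obtain \eqref{eq0213_02}, after which \eqref{eq0213_01} is H\"older interpolation between $L_p$ and $L_{q^*}$. Your exponent bookkeeping matches the paper's.
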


\begin{proof}
By the definition of $\bH_{p,0}^{\alpha,1}(\bR^d_T)$, we may assume that $\psi \in C_0^\infty\left([0,T] \times \bR^d \right)$ and $\psi(0,x) = 0$.
Using the Sobolev embedding in $x$, we have
\begin{equation}
                                    \label{eq11.55}
\|\psi\|_{L_p((0,T);L_{pd/(d-p)}(\bR^d))} \le N\|D_x \psi\|_{L_p(\bR^d_T)}.
\end{equation}
By \cite[Lemma A.7]{arXiv:1806.02635} with $\theta = 1$, we have
$$
\|\psi\|_{L_p(\bR^d;L_{p/(1-\alpha p)}((0,T)))} \le N\|\partial_t^\alpha \psi\|_{L_p(\bR^d_T)},
$$
which together with the Minkowski inequality implies that
\begin{equation}
                               \label{eq11.56}
\|\psi\|_{L_{p/(1-\alpha p)}((0,T);L_p(\bR^d))}
= \left\| \int_{\bR^d} |\psi(\cdot,x)|^p \, dx \right\|_{L_{\frac{1}{1-\alpha p}}(0,T)}^{\frac{1}{p}}
\le N\|\partial_t^\alpha \psi\|_{L_p(\bR^d_T)}.
\end{equation}
When $q = q^*$, by H\"older's inequality it follows that
$$
\|\psi\|_{L_q(\bR^d_T)} \leq \left( \int_0^T \|\psi(t,\cdot)\|_{L_p(\bR^d)}^{\frac{p}{1+\alpha d - \alpha p}} \|\psi(t,\cdot)\|_{L_{\frac{pd}{d-p}}(\bR^d)}^{\frac{\alpha d p}{1+\alpha d - \alpha p}} \, dt \right)^{1/q}
$$
$$
\leq \left( \int_0^T \|\psi(t,\cdot)\|_{L_p(\bR^d)}^{\frac{p}{1-\alpha p}} \, dt \right)^{\frac{1-\alpha p}{p + \alpha p d}} \left( \int_0^T \|\psi(t,\cdot)\|_{L_{\frac{pd}{d-p}}(\bR^d)}^p \, dt \right)^{\frac{\alpha d}{p+ \alpha p d}}
$$
$$
= \|\psi\|_{L_{p/(1-\alpha p)}\left((0,T); L_p(\bR^d)\right)}^{\frac{1}{1+\alpha d}} \|\psi\|_{L_p\left((0,T); L_{pd/(d-p)}(\bR^d)\right)}^{\frac{\alpha d}{1+\alpha d}}.
$$
This along with \eqref{eq11.55} and \eqref{eq11.56} proves \eqref{eq0213_02}.
The inequality \eqref{eq0213_01} follows from  \eqref{eq0213_02} and H\"older's inequality.
\end{proof}

From Theorem \ref{thm1204_1} the following corollary follows easily.

\begin{corollary}
							\label{cor1211_1}
Let $\alpha \in (0, 1)$ and $p, q \in (1,\infty)$ satisfy
$$
p < \min\{1/\alpha,d\}, \quad p<q \leq q^* := \frac{1/\alpha + d}{1/(\alpha p) + d/p - 1}.
$$
Then we have
$$
\|\psi\|_{L_q\left((0,T) \times B_1\right)} \le N \|\psi\|_{\bH_p^{\alpha,1}\left((0,T) \times B_1\right)}
$$
for any $\psi \in \bH_{p,0}^{\alpha,1}\left((0,T) \times B_1\right)$, where
$N = N(d,\alpha,p,q)$, but independent of $T$.
If $p \leq d$ and $p \leq 1/\alpha$, then the same estimate holds for $q\in [1,q^*)$ with $N$ depending also on $T$.
\end{corollary}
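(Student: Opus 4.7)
The plan is to reduce the bounded-domain embedding to the whole-space Gagliardo--Nirenberg-type inequality of Theorem~\ref{thm1204_1} via a spatial extension, and then convert the resulting multiplicative interpolation into the desired additive Sobolev-type embedding via Young's inequality.

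First, I would fix any bounded linear spatial extension operator $E$ from $W^1_p(B_1)$ to $W^1_p(\bR^d)$ whose output is supported in $B_2$ (for instance, Stein's reflection extension followed by a smooth cutoff), and apply $E$ pointwise in $t$ to $\psi\in\bH_{p,0}^{\alpha,1}((0,T)\times B_1)$. Since $E$ acts only in $x$, it commutes with $\partial_t^\alpha$ and $I_0^{1-\alpha}$ and preserves the zero initial condition at $t=0$; applying $E$ to a defining smooth approximating sequence of $\psi$ therefore produces one for $E\psi$, giving $E\psi\in\bH_{p,0}^{\alpha,1}(\bR^d_T)$ with
$$
\|E\psi\|_{\bH_p^{\alpha,1}(\bR^d_T)}\le N(d,p)\,\|\psi\|_{\bH_p^{\alpha,1}((0,T)\times B_1)},
$$
the constant being independent of $T$.

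For the main case $p<\min\{1/\alpha,d\}$ and $p<q\le q^*$, I would apply Theorem~\ref{thm1204_1} to $E\psi$, restrict to $(0,T)\times B_1$ (where $E\psi=\psi$), and invoke Young's inequality in the form $a^{\theta_1}b^{\theta_2}c^{\theta_3}\le a+b+c$---or its two-factor analogue when $q=q^*$, using \eqref{eq0213_02} in place of \eqref{eq0213_01}---to transform the multiplicative bound into a sum of the three $L_p$-norms comprising $\|\cdot\|_{\bH_p^{\alpha,1}(\bR^d_T)}$. Combined with the extension estimate, this yields $\|\psi\|_{L_q((0,T)\times B_1)}\le N\,\|\psi\|_{\bH_p^{\alpha,1}((0,T)\times B_1)}$ with $T$-independent constant.

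For the secondary case $q\in[1,q^*)$ under only $p\le\min\{d,1/\alpha\}$ (equality allowed), I would split on whether $q\le p$ or $p<q$. When $q\le p$, the inequality is immediate from H\"older on the bounded cylinder, with constant of order $(T|B_1|)^{1/q-1/p}$. When $p<q<q^*(p)$, I would pick $p'<p$ slightly smaller so that $p'<\min\{d,1/\alpha\}$ strictly and $p'<q<q^*(p')$; this is possible because $q^*(p)=p(1/\alpha+d)/(1/\alpha+d-p)$ is continuous and strictly increasing in $p$. Applying the already-established main case with exponent $p'$, and then H\"older's inequality on the bounded cylinder to pass from $\bH_{p'}^{\alpha,1}$ back to $\bH_p^{\alpha,1}$, completes the argument, now with a $T$-dependent constant. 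The only genuine technicality is verifying in the first step that $E\psi$ belongs to $\bH_{p,0}^{\alpha,1}(\bR^d_T)$ rather than just to $\bH_p^{\alpha,1}(\bR^d_T)$, which is routine once $E$ is applied pointwise in $t$ so that it acts separately on each smooth approximant of $\psi$ and preserves both the vanishing initial condition and, after cutoff, compact support in $x$.
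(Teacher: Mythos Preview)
Your proposal is correct and follows essentially the same approach as the paper: extend $\psi$ spatially to a function in $\bH_{p,0}^{\alpha,1}(\bR^d_T)$ with comparable norm, apply Theorem~\ref{thm1204_1}, and for the borderline case $p=d$ or $p=1/\alpha$ drop the exponent slightly (your $p'<p$, the paper's $p-\varepsilon$) and use H\"older on the bounded cylinder to return to $\bH_p^{\alpha,1}$. You supply more detail than the paper does---the explicit Stein extension with cutoff, the verification that $E\psi\in\bH_{p,0}^{\alpha,1}$ via the approximating sequence, the explicit Young step, and the separate treatment of $q\le p$ by H\"older---but the skeleton is identical.
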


\begin{proof}
If $p < d$ and $p < 1/\alpha$, the result follows easily from Theorem \ref{thm1204_1} with an extension of $\psi$ to a function in $\bH_{p,0}^{\alpha,1}(\bR^d_T)$ with a comparable norm.
If $p = d$ or $p = 1/\alpha$, then find $\varepsilon > 0$ such that
$$
q \leq \frac{1/\alpha + d}{1/(\alpha (p-\varepsilon)) + d/(p-\varepsilon) - 1} < \frac{1/\alpha + d}{1/(\alpha p) + d/p - 1}.
$$
Then
$$
\|\psi\|_{L_q\left((0,T) \times B_1\right)} \leq N \|\psi\|_{\bH_{p-\varepsilon}^{\alpha,1}\left((0,T) \times B_1\right)} \leq N \|\psi\|_{\bH_p^{\alpha,1}\left((0,T) \times B_1\right)}.
$$
The corollary is proved.
\end{proof}

\begin{theorem}[Embedding with $\alpha$-time derivative and $1$-spatial derivatives for $d < p < 1/\alpha$]
							\label{thm1207_2}
Let $\alpha \in (0, 1)$ and $p, q \in (1,\infty)$ satisfy
$$
d < p < \frac{1}{\alpha}, \quad p < q \leq p(\alpha p + 1).
$$
Then, for $\psi \in \bH_{p,0}^{\alpha, 1}\left((0,T) \times B_1\right)$, we have
\begin{equation}
							\label{eq0213_04}
\|\psi\|_{L_q((0,T)\times B_1)} \le N \left( \sum_{0 \leq |\beta| \leq 1} \|D_x^\beta \psi\|_{L_p((0,T)\times B_1)} \right)^{1-\theta} \|\partial_t^\alpha\psi\|_{L_p((0,T)\times B_1)}^\theta,
\end{equation}
where $N = N(d,\alpha,p,q)$, but independent of $T$, and
$$
\theta = \frac{1}{\alpha} \left( \frac{1}{p} - \frac{1}{q} \right) \in (0,1).
$$
If $d < p \leq 1/\alpha$, then the same estimate holds for $q$ satisfying
$$
1 \leq q < p(\alpha p + 1)
$$
with $N$ depending also on $T$.
\end{theorem}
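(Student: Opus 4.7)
My plan is to follow the three-step template of Theorem \ref{thm1204_1}: prove separately a spatial regularity estimate and a temporal regularity estimate, then interpolate in time via H\"older's inequality. The only essential change compared to Theorem \ref{thm1204_1} is the spatial step, since we now have $p > d$ rather than $p < d$. In the supercritical regime $p > d$, Morrey's embedding $W^{1,p}(B_1) \hookrightarrow L_\infty(B_1)$ holds, which integrated in $t$ yields
\[
\|\psi\|_{L_p((0,T); L_\infty(B_1))} \le N \sum_{|\beta| \le 1} \|D_x^\beta \psi\|_{L_p((0,T) \times B_1)}.
\]
The temporal step is identical to its analogue in Theorem \ref{thm1204_1}: applying the one-dimensional estimate of \cite[Lemma A.7]{arXiv:1806.02635} (with $\theta=1$) pointwise in $x$ and then Minkowski's inequality (admissible since $p/(1-\alpha p) > p$ when $\alpha p < 1$) produces
\[
\|\psi\|_{L_{p/(1-\alpha p)}((0,T); L_p(B_1))} \le N \|\partial_t^\alpha \psi\|_{L_p((0,T) \times B_1)}.
\]

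At the endpoint $q = p(1+\alpha p)$ I would start with the pointwise interpolation bound $\|\psi(t,\cdot)\|_{L_q(B_1)}^q \le \|\psi(t,\cdot)\|_{L_p(B_1)}^p \|\psi(t,\cdot)\|_{L_\infty(B_1)}^{q-p}$, integrate in $t$, and invoke H\"older's inequality with dual exponents $r = 1/(1-\alpha p)$ and $r' = 1/(\alpha p)$. The choice of $(r, r')$ is forced by the algebraic identity $(q-p)r' = p$, which singles out exactly $q = p(1+\alpha p)$, and it is precisely at this value that the two mixed-norm estimates above plug together to produce \eqref{eq0213_04}. A quick arithmetic check confirms that the resulting exponents are $1/(1+\alpha p) = \theta$ on the time-derivative factor and $\alpha p/(1+\alpha p) = 1-\theta$ on the spatial factor, as required. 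For the sub-endpoint range $p < q < p(1+\alpha p)$, the log-convexity interpolation $\|\psi\|_{L_q} \le \|\psi\|_{L_p}^{1-\sigma} \|\psi\|_{L_{p(1+\alpha p)}}^\sigma$, combined with the endpoint estimate just proved and the trivial bound $\|\psi\|_{L_p} \le \sum_{|\beta| \le 1} \|D_x^\beta \psi\|_{L_p}$, yields the Gagliardo--Nirenberg-type estimate with the correct exponent $\theta = (1/p - 1/q)/\alpha$.

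For the borderline assertion $d < p \le 1/\alpha$ with $1 \le q < p(1+\alpha p)$, the endpoint H\"older step above fails when $\alpha p = 1$, so I would instead fix some $p' \in (d, 1/\alpha)$ slightly smaller than $p$ with $q < p'(1+\alpha p')$, apply the estimate already proved with $p'$ in place of $p$, and then pass back to $p$-norms using the trivial inclusion $\|\cdot\|_{L_{p'}((0,T) \times B_1)} \le N(T) \|\cdot\|_{L_p((0,T) \times B_1)}$ on the bounded cylinder; this is where the constant picks up its $T$-dependence. The main obstacle is the exponent bookkeeping at the endpoint: it is the precise identity $q - p = \alpha p^2$ that makes the mixed-norm spaces $L_{p/(1-\alpha p)}((0,T); L_p(B_1))$ and $L_p((0,T); L_\infty(B_1))$ combine cleanly into the target $L_q((0,T) \times B_1)$ norm, and once this matching is set up everything else reduces to routine interpolation.
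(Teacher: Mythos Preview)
Your proposal is correct and follows essentially the same approach as the paper: the paper also uses Morrey's embedding to obtain \eqref{eq12.01}, the temporal estimate \eqref{eq11.56b} via \cite[Lemma~A.7]{arXiv:1806.02635} and Minkowski, and then the same H\"older interpolation at the endpoint $q=p(\alpha p+1)$ with the dual pair $(1/(1-\alpha p),\,1/(\alpha p))$, finishing the sub-endpoint range by H\"older. Your treatment of the borderline $p=1/\alpha$ via a slightly smaller $p'$ on the bounded cylinder is the natural completion of the argument (the paper is terse at this point), and your exponent bookkeeping matches exactly.
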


\begin{proof}
As above, we assume that $\psi \in  \bH_{p,0}^{\alpha,1}\left((0,T) \times B_1\right) \cap C^\infty\big([0,T] \times B_1\big)$ and $\psi(0,x) = 0$.
We first assume that $p < 1/\alpha$.
Since $p>d$, by the Sobolev embedding in $x$, we have
\begin{equation}
                                    \label{eq12.01}
\|\psi\|_{L_p((0,T);L_{\infty}(B_1))} \le N\left( \sum_{0 \leq |\beta| \leq 1} \|D_x^\beta \psi\|_{L_p((0,T)\times B_1)} \right).
\end{equation}
By \cite[Lemma A.7]{arXiv:1806.02635} with $\theta = 1$ and the Minkowski inequality, we have
\begin{multline}
                               \label{eq11.56b}
\|\psi\|_{L_{p/(1-\alpha p)}((0,T);L_p(B_1))} = \left\| \int_{B_1} |\psi(\cdot,x)|^p \, dx \right\|_{L_{1/(1-\alpha p)}((0,T))}^{1/p}
\\
\leq \|\psi\|_{L_p(B_1; L_{p/(1-\alpha p)}((0,T)))} \le N\|\partial_t^\alpha \psi\|_{L_p((0,T)\times B_1)}.
\end{multline}
Then for $q = p(\alpha p + 1)$, by H\"older's inequality it follows that
$$
\|\psi\|_{L_q(B_1)} \leq \left( \int_0^T \|\psi(t,\cdot)\|_{L_\infty(B_1)}^{q-p} \|\psi(t,\cdot)\|_{L_p(B_1)}^p \, dt \right)^{1/q}
$$
$$
\leq \left( \int_0^T \|\psi(t,\cdot)\|_{L_\infty(B_1)}^p \, dt \right)^{\frac{\alpha}{\alpha p + 1}} \left( \int_0^T \|\psi(t,\cdot)\|_{L_p(B_1)}^{\frac{p}{1-\alpha p}} \, dt \right)^{\frac{1-\alpha p}{p(\alpha p + 1)}}
$$
$$
= \|\psi\|_{L_p((0,T);L_{\infty}(B_1))}^{\frac{\alpha p}{\alpha p + 1}} \|\psi\|_{L_{p/(1-\alpha p)}\left((0,T); L_p(B_1)\right)}^{\frac{1}{\alpha p + 1}} .
$$
From this, \eqref{eq12.01}, and \eqref{eq11.56b}, we get \eqref{eq0213_04} with $q=p(\alpha p+1)$ and $\theta=1/(\alpha p+1)$. The remaining cases then follow from  H\"older's inequality.
\end{proof}

\begin{theorem}[Embedding with $\alpha$-time derivative and $1$-spatial derivatives for $1/\alpha < p <  d$]
Let $\alpha \in (0,1)$ and $p, q \in (1,\infty)$ such that
\begin{equation*}
\frac{1}{\alpha} < p < d, \quad p < q \leq p + \frac{p^2}{d}.
\end{equation*}
Then, for $\psi \in \bH_{p,0}^{\alpha,1}(\bR^d_T)$,
$$
\|\psi\|_{L_q(\bR^d_T)} \le N T^{\alpha\left(1-\frac{p}{q}\right)-\frac{1}{p}+\frac{1}{q}} \|\partial_t^\alpha \psi\|_{L_p(\bR^d_T)}^{1-p/q} \|D_x \psi\|_{L_p(\bR^d_T)}^{\theta p/q} \|\psi\|_{L_p(\bR^d_T)}^{(1-\theta)p/q},
$$
where $N=N(d,\alpha,p,q)$ and
$
\theta = d(q-p)/p^2 \in (0,1]$.
\end{theorem}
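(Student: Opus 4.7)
My approach mirrors the proofs of Theorems \ref{thm1204_1} and \ref{thm1207_2}, but with the roles of spatial and temporal regularity interchanged: the new range $1/\alpha < p < d$ makes spatial Sobolev embedding (Gagliardo--Nirenberg) available for an $L_q$-improvement in $x$, while $\alpha p > 1$ allows me to control $\psi$ in $L_\infty$ in $t$ via a Morrey-type embedding for the Riemann--Liouville integral $I_0^\alpha$. By density I may assume throughout that $\psi \in C_0^\infty([0,T]\times\bR^d)$ with $\psi(0,\cdot) = 0$, and recover the general case from the three continuous seminorms appearing on the right-hand side.

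First I would establish the temporal $L_\infty$ bound. Because $\psi(0,\cdot)=0$, the Caputo and Riemann--Liouville derivatives coincide and $\partial_t^\alpha \psi = I_0^{1-\alpha}\psi_t$; the semigroup identity $I_0^\alpha I_0^{1-\alpha}=I_0^1$ then yields the pointwise representation $\psi = I_0^\alpha \partial_t^\alpha \psi$. Since $\alpha p > 1$, H\"older's inequality in $s$ against the kernel $(t-s)^{\alpha-1}$ (whose $L_{p/(p-1)}$ norm on $(0,t)$ equals $Nt^{\alpha-1/p}$) gives $|\psi(t,x)| \le N t^{\alpha-1/p}\|\partial_t^\alpha \psi(\cdot,x)\|_{L_p(0,t)}$, and taking $L_p$ in $x$ followed by $\sup$ in $t \in (0,T)$ yields
$$
\|\psi\|_{L_\infty((0,T); L_p(\bR^d))} \le N T^{\alpha - 1/p} \|\partial_t^\alpha \psi\|_{L_p(\bR^d_T)}.
$$
This plays the role here that \eqref{eq11.56} played in the proof of Theorem \ref{thm1204_1}.

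Next, for fixed $t$, the hypothesis $p < q \le p + p^2/d < pd/(d-p) = p^*$ allows me to apply the Gagliardo--Nirenberg inequality
$$
\|\psi(t,\cdot)\|_{L_q(\bR^d)} \le N\|D_x \psi(t,\cdot)\|_{L_p(\bR^d)}^{\theta'} \|\psi(t,\cdot)\|_{L_p(\bR^d)}^{1-\theta'}, \qquad \theta' = d\Bigl(\frac{1}{p}-\frac{1}{q}\Bigr),
$$
and one checks that $q\theta' = p\theta$. Raising to the $q$-th power, integrating in $t$, and splitting $\|\psi(t,\cdot)\|_{L_p}^{q-\theta p} = \|\psi(t,\cdot)\|_{L_p}^{q-p}\cdot\|\psi(t,\cdot)\|_{L_p}^{p(1-\theta)}$, I pull the first factor out under the $L_\infty$-in-$t$ norm and apply H\"older in $t$ with exponents $(1/\theta, 1/(1-\theta))$ (interpreted trivially when $\theta = 1$) to the remainder to obtain
$$
\|\psi\|_{L_q(\bR^d_T)}^q \le N \|\psi\|_{L_\infty((0,T);L_p)}^{q-p}\|D_x\psi\|_{L_p(\bR^d_T)}^{\theta p}\|\psi\|_{L_p(\bR^d_T)}^{p(1-\theta)}.
$$
Inserting the estimate from the previous paragraph and taking the $q$-th root delivers the claimed inequality with $T$-exponent $(\alpha-1/p)(1-p/q) = \alpha(1-p/q) - 1/p + 1/q$.

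The only subtle step is the combination of the pointwise identity $\psi = I_0^\alpha \partial_t^\alpha \psi$ for smooth $\psi$ vanishing at $t=0$ with the Morrey-type continuity of $I_0^\alpha:L_p(0,T)\to L_\infty(0,T)$ valid precisely because $\alpha p > 1$; both are classical in fractional calculus and essentially implicit in Lemma A.7 of \cite{arXiv:1806.02635}, but should be stated explicitly in the class $\bH_{p,0}^{\alpha,1}$. The remainder of the argument is bookkeeping of H\"older exponents.
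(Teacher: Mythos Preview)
Your proof is correct and follows essentially the same strategy as the paper: both combine the temporal $L_\infty$ bound $\|\psi\|_{L_\infty((0,T);L_p(\bR^d))}\le NT^{\alpha-1/p}\|\partial_t^\alpha\psi\|_{L_p}$ (valid since $\alpha p>1$; the paper cites \cite[Lemma A.6]{arXiv:1806.02635} for this, not Lemma A.7) with spatial Sobolev embedding, and then close with H\"older. The only organizational difference is that the paper first treats the endpoint $q=p+p^2/d$ via the full Sobolev embedding $\|\psi\|_{L_p((0,T);L_{pd/(d-p)})}\le N\|D_x\psi\|_{L_p}$ and afterwards interpolates in $q$, whereas you apply Gagliardo--Nirenberg at each time slice directly for the target $q$ and then integrate in $t$; these two routes are equivalent.
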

\begin{proof}
As above, we assume that $\psi \in C_0^\infty\left([0,T] \times \bR^d \right)$ and $\psi(0,x) = 0$.
Since $\alpha>1/p$, by \cite[Lemma A.6]{arXiv:1806.02635} and the Minkowski inequality, we have
\begin{equation}
                                    \label{eq12.14}
\|\psi\|_{L_{\infty}((0,T);L_p(\bR^d))}
\le \|\psi\|_{L_p(\bR^d;L_{\infty}((0,T)))}\le NT^{\alpha-1/p}\|\partial_t^\alpha \psi\|_{L_p(\bR^d_T)}.
\end{equation}
By the Sobolev embedding in $x$, we have
\begin{equation}
                                    \label{eq12.19}
\|\psi\|_{L_p((0,T);L_{dp/(d-p)}(\bR^{d}))}\le N\|D_x \psi\|_{L_p(\bR^d_T)}.
\end{equation}
By \eqref{eq12.14}, \eqref{eq12.19}, and H\"older's inequality, we get the desired estimate with $q=p+p^2/d$. The general case then follows from H\"older's inequality.
\end{proof}

By extending $\psi \in \bH_{p,0}^{\alpha,1}\left((0,T) \times B_1 \right)$ to a function in $\bH_{p,0}^{\alpha,1}(\bR^d_T)$ with a comparable norm and using the above theorem, we get

\begin{corollary}
							\label{cor0225_1}
Let $\alpha \in (0,1)$ and $p, q \in (1,\infty)$ such that
\begin{equation*}
\frac{1}{\alpha} < p < d, \quad p < q \leq p + \frac{p^2}{d}.
\end{equation*}
Then, for $\psi \in \bH_{p,0}^{\alpha,1}\left((0,T) \times B_1\right)$,
$$
\|\psi\|_{L_q\left((0,T) \times B_1\right)} \le N T^{\alpha\left(1-\frac{p}{q}\right)-\frac{1}{p}+\frac{1}{q}} \|\psi\|_{\bH_p^{\alpha,1}\left((0,T) \times B_1\right)},
$$
where $N=N(d,\alpha,p,q)$.
If $1/\alpha < p \leq d$, the same estimate holds for $q$ satisfying
$$
1 \leq q < p + p^2/d
$$
with $N$ depending also on $T$.
\end{corollary}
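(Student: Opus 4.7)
The plan is to reduce to the whole-space estimate in the preceding theorem by constructing a bounded linear extension operator $E$ that takes $\psi \in \bH_{p,0}^{\alpha,1}((0,T) \times B_1)$ to $E\psi \in \bH_{p,0}^{\alpha,1}(\bR^d_T)$ with
$$
\|E\psi\|_{\bH_p^{\alpha,1}(\bR^d_T)} \le N \|\psi\|_{\bH_p^{\alpha,1}((0,T) \times B_1)},
$$
where $N$ depends only on $d$. The key observation is that $B_1$ is a smooth domain, so a standard Stein-type extension acting only on the spatial variable works: it maps $W_p^1(B_1)$ to $W_p^1(\bR^d)$ boundedly, and because it is defined through reflections/cutoffs in $x$ alone, it commutes with the Riemann--Liouville integral $I_0^{1-\alpha}$ by Fubini, hence with $\partial_t^\alpha$, giving $\partial_t^\alpha (E\psi) = E(\partial_t^\alpha \psi)$ with the same $L_p$ bound. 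If $\{\psi_n\} \subset C^\infty([0,T] \times B_1)$ is an approximating sequence with $\psi_n(0,\cdot) = 0$, then $\{E\psi_n\}$ lies in $C^\infty([0,T] \times \bR^d)$ with $(E\psi_n)(0,\cdot) = 0$ and converges to $E\psi$ in $\bH_p^{\alpha,1}(\bR^d_T)$, so $E\psi \in \bH_{p,0}^{\alpha,1}(\bR^d_T)$.

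Once this extension is in hand, the first assertion is immediate: apply the preceding theorem to $E\psi$, bound each factor on the right-hand side by the corresponding piece of $\|E\psi\|_{\bH_p^{\alpha,1}(\bR^d_T)}$ (or simply by the whole norm), invoke the extension estimate, and note that $\|\psi\|_{L_q((0,T) \times B_1)} \le \|E\psi\|_{L_q(\bR^d_T)}$. The multiplicative form in the theorem is stronger than the additive bound stated in the corollary, so one could either quote the multiplicative version and combine the factors by the trivial inequality $a^\theta b^{1-\theta} \le a + b$, or reapply the proof of the theorem to the extension.

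For the extended range $1/\alpha < p \le d$ with $1 \le q < p + p^2/d$, two additional moves are needed. When $1 \le q \le p$, H\"older's inequality on the finite-measure cylinder $(0,T) \times B_1$ yields $\|\psi\|_{L_q} \le N(T) \|\psi\|_{L_p}$, which suffices after absorbing the $T$-factor into $N$. When $p < q < p + p^2/d$ and the first assertion does not apply, e.g.\ because $p = d$, I would pick an auxiliary exponent $p' \in (1/\alpha, p)$ sufficiently close to $p$ so that $p' < q < p' + (p')^2/d$; this is possible by continuity since $q < p + p^2/d$. The first assertion applied with exponent $p'$ then controls $\|\psi\|_{L_q}$ by a multiple of $\|\psi\|_{\bH_{p'}^{\alpha,1}((0,T) \times B_1)}$, and another application of H\"older in the finite-measure cylinder (for each of $\psi$, $D_x\psi$, and $\partial_t^\alpha \psi$) bounds this by $N(T)\|\psi\|_{\bH_p^{\alpha,1}((0,T) \times B_1)}$. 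The only delicate point is verifying that the Stein extension genuinely commutes with $\partial_t^\alpha$ on $\bH_{p,0}^{\alpha,1}$, which I expect to be the main (if modest) technical obstacle; it should follow by writing $E$ as a finite sum of operators of the form $\psi \mapsto \chi(x) \psi(t,\Phi(x))$ with smooth cutoffs $\chi$ and diffeomorphisms $\Phi$ that do not depend on $t$, under which $I_0^{1-\alpha}$ passes through by Fubini, followed by approximation of $\psi$ by its smooth cutoffs to justify the interchange in the distributional definition of $\partial_t^\alpha$.
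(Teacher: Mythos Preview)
Your proposal is correct and matches the paper's approach: the paper's proof is the single sentence ``By extending $\psi \in \bH_{p,0}^{\alpha,1}\left((0,T) \times B_1 \right)$ to a function in $\bH_{p,0}^{\alpha,1}(\bR^d_T)$ with a comparable norm and using the above theorem, we get [the corollary],'' and the borderline case $p=d$ is handled exactly as you describe (the paper carries out the identical $p-\varepsilon$ trick explicitly in the proof of Corollary~\ref{cor1211_1}). Your discussion of why the spatial Stein extension commutes with $I_0^{1-\alpha}$ and preserves the zero initial condition is more detailed than anything the paper provides, but it is the right justification for what the paper takes for granted.
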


\begin{theorem}[Embedding with $\alpha$-time derivative and $1$-spatial derivatives for $\max\{1/\alpha ,d\} < p \leq d + 1/\alpha$]
							\label{thm0214_1}
Let $\alpha \in (0,1)$ and $p, q \in (1,\infty)$ such that
\begin{equation*}
\max\{1/\alpha ,d\} < p \leq d + 1/\alpha, \quad p < q \le 2p.
\end{equation*}
Then, for $\psi \in \bH_{p,0}^{\alpha,1}\left((0,T) \times B_1\right)$,
\begin{align*}
&\|\psi\|_{L_q\left((0,T) \times B_1\right)} \\
&\le N T^{\frac{\alpha p}{q} - \frac{1}{p} + \frac{1}{q}} \left( \sum_{0 \leq |\beta|\leq 1}\|D_x^\beta \psi\|_{L_p\left((0,T) \times B_1\right)}\right)^{1-\theta} \|\partial_t^\alpha \psi\|_{L_p\left((0,T) \times B_1\right)}^\theta,
\end{align*}
where $N=N(d,\alpha,p,q)$ and
$
\theta = p/q \in (0,1)$.
\end{theorem}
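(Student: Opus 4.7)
The plan is to prove the inequality for smooth $\psi \in C_0^\infty([0,T] \times B_1)$ with $\psi(0,x) = 0$ and then pass to the limit by density in $\bH_{p,0}^{\alpha,1}((0,T)\times B_1)$. Two one-sided embeddings drive the argument. First, since $p > d$, the classical Sobolev embedding in $x$ on $B_1$, applied slicewise and then integrated in $t$, gives
\[
\|\psi\|_{L_p((0,T); L_\infty(B_1))} \le N \sum_{|\beta|\le 1}\|D_x^\beta \psi\|_{L_p((0,T) \times B_1)}.
\]
Second, since $p > 1/\alpha$, Lemma A.6 of \cite{arXiv:1806.02635} (the same one-dimensional Sobolev-in-$t$ embedding used in the proof of Theorem \ref{thm1207_2}) applied for each fixed $x$, followed by Minkowski's integral inequality, yields
\[
\|\psi\|_{L_\infty((0,T); L_p(B_1))} \le \|\psi\|_{L_p(B_1; L_\infty((0,T)))} \le N T^{\alpha - 1/p}\|\partial_t^\alpha \psi\|_{L_p((0,T)\times B_1)}.
\]

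I would then combine the two estimates via a direct H\"older interpolation. Writing $|\psi|^q = |\psi|^p \cdot |\psi|^{q-p}$ and using that $0 < q - p \le p$ in the present range, I estimate
\[
\int_0^T\!\!\int_{B_1}|\psi|^q \, dx \, dt \le \sup_{t}\|\psi(t,\cdot)\|_{L_p(B_1)}^{\,p} \int_0^T \|\psi(t,\cdot)\|_{L_\infty(B_1)}^{q-p}\, dt,
\]
and another H\"older in $t$ with exponents $p/(q-p)$ and $p/(2p-q)$ bounds the last factor by $T^{(2p-q)/p}\|\psi\|_{L_p L_\infty}^{q-p}$. Taking the $q$-th root gives
\[
\|\psi\|_{L_q((0,T)\times B_1)} \le T^{(2p-q)/(pq)}\,\|\psi\|_{L_\infty L_p}^{p/q}\,\|\psi\|_{L_p L_\infty}^{(q-p)/q}.
\]
Substituting the two base estimates, with $\theta = p/q$, produces the desired inequality; an arithmetic check confirms that the total $T$-exponent is $(2p-q)/(pq) + \theta(\alpha - 1/p) = \alpha p/q - 1/p + 1/q$, matching the claim.

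No substantive obstacle is anticipated: the argument is essentially an assembly of one spatial Sobolev embedding (enabled by $p > d$) and one temporal Sobolev embedding (enabled by $p > 1/\alpha$), glued together by H\"older. The only points requiring care are verifying that the $T$-dependence in the temporal embedding is exactly $T^{\alpha-1/p}$ (which uses the zero initial trace built into $\bH_{p,0}^{\alpha,1}$) and tracking the $T$-exponents through the H\"older interpolation. The intermediate cases $p < q < 2p$ may alternatively be recovered from the endpoint $q = 2p$ by interpolating with the trivial $L_p$ bound, but the direct H\"older computation above handles all $q$ in the stated range uniformly.
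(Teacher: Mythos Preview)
Your proposal is correct and follows essentially the same approach as the paper: both use the spatial Sobolev embedding $\|\psi\|_{L_p((0,T);L_\infty(B_1))}\le N\sum_{|\beta|\le 1}\|D_x^\beta\psi\|_{L_p}$ (from $p>d$) together with the temporal embedding from \cite[Lemma~A.6]{arXiv:1806.02635} (from $p>1/\alpha$), then combine via H\"older. The only cosmetic difference is that the paper invokes Lemma~A.6 directly at the intermediate exponent $q'=p^2/(2p-q)$ to get $\|\psi\|_{L_{q'}((0,T);L_p(B_1))}\le N T^{\alpha-1/p+1/q'}\|\partial_t^\alpha\psi\|_{L_p}$ before the final H\"older step, whereas you use the endpoint $q'=\infty$ and perform an additional H\"older in $t$; the resulting $T$-exponent and $\theta=p/q$ are identical.
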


\begin{proof}
Again we assume that
$$
\psi \in \bH_{p,0}^{\alpha,1}\left((0,T)\times B_1\right) \cap C^\infty\left([0,T] \times B_1 \right)\quad
\text{and}\quad\psi(0,x) = 0.
$$
We set $q':=p^2/(2p-q)\in (p,\infty]$. Since $\alpha - 1/p > 0$, from \cite[Lemma A.6]{arXiv:1806.02635} and the Minkowski inequality,
$$
\|\psi\|_{L_{q'}((0,T);L_p(B_1))}
\le \|\psi\|_{L_p(B_1;L_{q'}((0,T)))}
\le N T^{\alpha - 1/p + 1/q'} \|\partial_t^\alpha \psi\|_{L_p((0,T)\times B_1)},
$$
where $N = N(\alpha,p,q')$.
This, \eqref{eq12.01}, and H\"older's inequality yield the desired inequality.
\end{proof}

Recall that
$$
Q_R(t,x) =Q_{R,R}(t,x) = (t-R^{2/\alpha}, t) \times B_R(x).
$$
For the H\"{o}lder semi-norm, we denote
$$
[u]_{C^{\sigma_1, \sigma_2}(\cD)} = \sup_{\substack{(t,x),(s,y) \in \cD \\ (t,x) \neq (s,y)}}\frac{|u(t,x) - u(s,y)|}{|t-s|^{\sigma_1} + |x-y|^{\sigma_2}},
$$
where $\cD \subset \bR \times \bR^d$.

\begin{theorem}[Embedding with $\alpha$-time derivative and $1$-spatial derivatives for $p \in (d+1/\alpha,\infty)$]
							\label{thm1029_1}
Let $\alpha \in (0,1)$ and $p \in (1,\infty)$ such that
$$
\sigma := 1-(d+1/\alpha)/p \in (0,1).
$$
Then, for $\psi \in \bH_{p,0}^{\alpha,1}\left((0,1) \times \bR^d\right)$, we have
$$
[\psi]_{C^{\sigma \alpha, \sigma}\left((0,1) \times \bR^d\right)} \leq N(d,\alpha,p) \|\psi\|_{\bH_p^{\alpha,1}\left((0,1) \times \bR^d\right)}.
$$
\end{theorem}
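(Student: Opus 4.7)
The plan is to derive a Campanato-type mean oscillation estimate on anisotropic cylinders adapted to $\bH_p^{\alpha,1}$ and then invoke the corresponding Hölder characterization.

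Introduce cylinders $\tilde Q_R(t_0,x_0) := (t_0 - R^{1/\alpha}, t_0) \times B_R(x_0)$. This scaling $t \sim R^{1/\alpha}$ is dictated by the target exponents: $\sigma$-Hölder continuity in the quasi-metric $\max(|x-y|, |t-s|^\alpha)$ is exactly $C^{\sigma\alpha, \sigma}$ in the standard notation. Note this differs from the paper's $Q_R$ (with $t \sim R^{2/\alpha}$), which is tailored to the PDE rather than to $\bH_p^{\alpha, 1}$.

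\textbf{Step 1: Pointwise time-Morrey estimate.} Since $\psi(0,\cdot)=0$, for a.e.\ $x$ we have $\psi(t,x) = I_0^\alpha \partial_t^\alpha \psi(t,x)$, so for $0 < t_2 < t_1 \leq 1$,
\[
\psi(t_1,x) - \psi(t_2,x) = \frac{1}{\Gamma(\alpha)}\int_0^{t_1} \bigl[(t_1-s)^{\alpha-1} - (t_2-s)_+^{\alpha-1}\bigr]\partial_s^\alpha \psi(s,x)\,ds.
\]
Apply Hölder's inequality with exponents $p$ and $q = p/(p-1)$ and estimate the $L_q$-norm of the kernel. Set $h = t_1 - t_2$ and substitute $r = t_1 - s$: the contribution from $r \in (0,h)$ gives $h^{q(\alpha-1/p)}$ since $(\alpha-1)q+1 = q(\alpha-1/p)$; on $r \in (2h,t_1)$ the mean value inequality $|r^{\alpha-1}-(r-h)^{\alpha-1}| \lesssim h\,r^{\alpha-2}$ and the easy bound $(2-\alpha)q > 1$ yield the same power. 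The upshot is
\[
|\psi(t_1,x)-\psi(t_2,x)| \leq N\,|t_1-t_2|^{\alpha-1/p}\|\partial_s^\alpha \psi(\cdot,x)\|_{L_p((0,1))}
\]
for a.e.\ $x$, where the integrability uses $p > 1/\alpha$.

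\textbf{Step 2: Campanato bound on $\tilde Q_R$.} Fix $\tilde Q_R = I_R \times B_R(x_0) \subset (0,1)\times\bR^d$ and set $\bar\psi(x) := \fint_{I_R}\psi(t,x)\,dt$. Jensen's inequality combined with Step 1 gives
\[
\int_{\tilde Q_R} |\psi - \bar\psi|^p\,dx\,dt \leq N |I_R|^{p(\alpha-1/p)+1}\int_{B_R}\|\partial_s^\alpha\psi(\cdot,x)\|_{L_p((0,1))}^p\,dx = N R^p \|\partial_s^\alpha\psi\|_{L_p((0,1)\times B_R)}^p,
\]
using $|I_R|^{p(\alpha-1/p)+1} = R^{p\alpha/\alpha} = R^p$. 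For the spatial average, $D_x \bar\psi = \fint_{I_R} D_x\psi\,dt$, so standard Poincaré on $B_R$ with Jensen yields
\[
\int_{\tilde Q_R} |\bar\psi - (\bar\psi)_{B_R}|^p\,dx\,dt \leq N R^p\|D_x\psi\|_{L_p(\tilde Q_R)}^p.
\]
Summing and dividing by $|\tilde Q_R|\asymp R^{d+1/\alpha}$ gives the Campanato estimate
\[
\Bigl(\fint_{\tilde Q_R}|\psi - (\psi)_{\tilde Q_R}|^p\,dx\,dt\Bigr)^{1/p} \leq N R^{\sigma}\,\|\psi\|_{\bH_p^{\alpha,1}((0,1)\times\bR^d)}.
\]

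\textbf{Step 3: From Campanato to Hölder.} By the anisotropic Campanato characterization on cylinders of aspect $t\sim R^{1/\alpha}$, the above bound is equivalent to $\sigma$-Hölder continuity in the quasi-metric $\max(|x-y|,|t-s|^\alpha)$, which translates to
\[
|\psi(t_1,x_1)-\psi(t_2,x_2)| \leq N\bigl(|x_1-x_2|^\sigma + |t_1-t_2|^{\sigma\alpha}\bigr)\|\psi\|_{\bH_p^{\alpha,1}},
\]
as required.

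\textbf{Main obstacle.} The delicate point is Step 1: one must verify the cancellation between the two singular kernels with sharp exponent $\alpha - 1/p$ and constants depending only on $d$, $\alpha$, $p$. For cylinders $\tilde Q_R(t_0,x_0)$ meeting the initial hyperplane $\{t=0\}$ (i.e.\ $t_0 \leq R^{1/\alpha}$), one extends $\psi$ by zero for $t \leq 0$; membership in $\bH_{p,0}^{\alpha,1}$ on the enlarged time interval is preserved in the same manner as Lemma \ref{lem0206_1}, so these boundary cylinders require no separate treatment.
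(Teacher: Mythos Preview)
Your proof is correct and shares with the paper the same key ingredient: the pointwise time-H\"older estimate of Step~1, which is precisely what the paper cites as \cite[Lemma~A.14]{arXiv:1806.02635}. The routes diverge after that. The paper proceeds by a direct absorption argument: setting $K=[\psi]_{C^{\sigma\alpha,\sigma}}$, it estimates $|\psi(t_1,x)-\psi(t_2,y)|$ by inserting auxiliary points $z\in B_\rho(x)$ and $s\in(t_2-\rho^{1/\alpha},t_2+\rho^{1/\alpha})$, averages in $z$ (resp.\ $s$) using the time estimate (resp.\ the spatial Sobolev embedding $W_p^1\hookrightarrow C^{1-d/p}$), and arrives at $K\le 4\varepsilon^\sigma K + N(\varepsilon)\|\psi\|_{\bH_p^{\alpha,1}}$ for $\rho=\varepsilon(|t_1-t_2|^\alpha+|x-y|)$, then chooses $\varepsilon$ small. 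Your approach instead packages the same two estimates into a Campanato bound on the anisotropic cylinders $\tilde Q_R$ and invokes the Campanato--H\"older characterization on the associated space of homogeneous type. Your route is arguably more systematic and sidesteps the a~priori finiteness of $K$ needed for the absorption; the paper's route is fully self-contained and avoids citing the anisotropic Campanato theory. Both are of comparable length and difficulty.
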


\begin{proof}
Define
$$
K = \sup_{\substack{(t,x),(s,y) \in (0,1)\times \bR^d \\ (t,x) \neq (s,y)}}\frac{|\psi(t,x) - \psi(s,y)|}{|t-s|^{\sigma \alpha} + |x-y|^\sigma}.
$$
To prove the estimate, we take $(t_1,x), (t_2,y) \in (0,1) \times \bR^d$ and set
$$
\rho = \varepsilon \left( |t_1-t_2|^{\alpha} + |x-y| \right),
$$
where $\varepsilon \in (0,1)$ is to be specified below.
We write
$$
|\psi(t_1,x) - \psi(t_2,y)| \leq |\psi(t_1,x) - \psi(t_2,x)| + |\psi(t_2,x) - \psi(t_2,y)| := J_1 + J_2.
$$
To estimate $J_1$, for $z \in B_\rho(x)$, we have
$$
J_1 \leq |\psi(t_1,x)-\psi(t_1,z)| + |\psi(t_1,z) - \psi(t_2,z)| + |\psi(t_2,z) -\psi(t_2,x)|
$$
$$
\leq 2 K \rho^\sigma + |\psi(t_1,z) - \psi(t_2,z)|,
$$
where by \cite[Lemma A.14]{arXiv:1806.02635} we see that
\begin{equation}
							\label{eq0218_04}
|\psi(t_1,z) - \psi(t_2,z)| \le N(\alpha,p)|t_1-t_2|^{\alpha-1/p} \|\partial_t^\alpha \psi(\cdot,z)\|_{L_p(0,1)}.
\end{equation}
Then by taking the average of $J_1$ over $B_\rho(x)$ with respect to $z$ along with H\"{o}lder's inequality and using \eqref{eq0218_04}, we get
$$
J_1 \leq 2K \rho^\sigma + N |t_1-t_2|^{\alpha-1/p} \rho^{-d/p} \| \partial_t^\alpha \psi\|_{L_p\left((0,1) \times \bR^d\right)}
$$
$$
\leq 2 K \rho^\sigma + N \varepsilon^{-1+1/(\alpha p)} \rho^\sigma \| \partial_t^\alpha \psi\|_{L_p\left((0,1) \times \bR^d\right)},
$$
where $N = N(d,\alpha,p)$.

To estimate $J_2$, we have that, for $s \in (t_2-\rho^{1/\alpha},t_2+\rho^{1/\alpha}) \cap (0,1)$,
$$
J_2 \leq |\psi(t_2,x) - \psi(s,x)| + |\psi(s,x) - \psi(s,y)| + |\psi(s,y) - \psi(t_2, y)|
$$
$$
\leq
2 K \rho^\sigma + N(d,p)|x-y|^{1-d/p} \|\psi(s,\cdot)\|_{W_p^1(\bR^d)},
$$
where we used the usual Sobolev embedding for functions in $x \in \bR^d$ and the condition that $1-d/p \in (0,1)$.
Then by taking the average of $J_2$ over the interval $(t_2 - \rho^{1/\alpha}, t_2+\rho^{1/\alpha}) \cap (0,1)$ with respect to $s$ along with H\"{o}lder's inequality, we get
$$
J_2 \leq 2K \rho^\sigma + N \varepsilon^{-1+d/p} \rho^\sigma \||\psi|+|D_x \psi|\|_{L_p\left((0,1) \times \bR^d \right)}.
$$

Collecting the estimates for $J_1$ and $J_2$ above, we see that
$$
|\psi(t_1,x) - \psi(t_2,y)| \leq 4 K \rho^\sigma + N \left(\varepsilon^{-1+1/(\alpha p)} +  \varepsilon^{-1+d/p}  \right) \rho^\sigma \|\psi\|_{\bH_p^{\alpha,1} \left((0,1) \times \bR^d \right)},
$$
which implies that
$$
K \leq 4 \varepsilon^\sigma K + N(d,\alpha,p) (\varepsilon^{-d/p} +  \varepsilon^{-1/(\alpha p)} )\|\psi\|_{\bH_p^{\alpha,1} \left((0,1) \times \bR^d \right)}.
$$
We finish the proof by choosing $\varepsilon > 0$ small enough so that
$4 \varepsilon^\sigma < 1$.
\end{proof}

\begin{corollary}
                                        \label{cor1029_1}
Let $\alpha \in (0,1)$ and $p \in (1,\infty)$ such that
$$
\sigma := 1-(d+1/\alpha)/p \in (0,1).
$$
Then, for $\bH_{p,0}^{\alpha,1}\left((0,1) \times B_1\right)$, we have
\begin{equation*}
[\psi]_{C^{\sigma \alpha, \sigma}\left((0,1) \times B_1\right)} \leq N(d,\alpha,p) \|\psi\|_{\bH_p^{\alpha,1}\left((0,1) \times B_1\right)}.
\end{equation*}
\end{corollary}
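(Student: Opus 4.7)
The strategy is to reduce to Theorem \ref{thm1029_1} by means of a spatial extension operator that commutes with the Caputo time derivative. Concretely, I will construct a bounded linear operator
$$
E : \bH_{p,0}^{\alpha,1}\bigl((0,1) \times B_1\bigr) \longrightarrow \bH_{p,0}^{\alpha,1}\bigl((0,1) \times \bR^d\bigr)
$$
such that $E\psi = \psi$ on $(0,1) \times B_1$ and
$$
\|E\psi\|_{\bH_p^{\alpha,1}((0,1) \times \bR^d)} \le N(d,p)\,\|\psi\|_{\bH_p^{\alpha,1}((0,1) \times B_1)}.
$$
Once $E$ is in hand, applying Theorem \ref{thm1029_1} to $E\psi$ and using the monotonicity of the H\"older seminorm under restriction to subsets,
$$
[\psi]_{C^{\sigma\alpha,\sigma}((0,1)\times B_1)} \le [E\psi]_{C^{\sigma\alpha,\sigma}((0,1)\times \bR^d)} \le N\,\|E\psi\|_{\bH_p^{\alpha,1}((0,1)\times \bR^d)},
$$
yields the desired estimate.

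To build $E$, I would combine Stein's classical extension operator $\mathcal{E} : W_p^1(B_1) \to W_p^1(\bR^d)$ (which is linear, bounded, and acts purely in the spatial variables) with a cutoff $\chi \in C_0^\infty(B_2)$ satisfying $\chi \equiv 1$ on $B_1$, setting $(E\psi)(t,x) := \chi(x)\,(\mathcal{E}\psi(t,\cdot))(x)$. Because $\mathcal{E}$ acts only in $x$ and is given by an integral operator independent of $t$, it commutes with $I_0^{1-\alpha}$ and hence with $\partial_t^\alpha$ in the distributional sense; multiplication by the time-independent cutoff $\chi$ also commutes with $\partial_t^\alpha$. Consequently,
$$
\partial_t^\alpha (E\psi)(t,x) = \chi(x)\,\bigl(\mathcal{E}\,\partial_t^\alpha \psi(t,\cdot)\bigr)(x) \quad \text{a.e. in } (0,1) \times \bR^d,
$$
so $\|\partial_t^\alpha(E\psi)\|_{L_p((0,1)\times\bR^d)} \le N\|\partial_t^\alpha \psi\|_{L_p((0,1)\times B_1)}$, while boundedness of $\mathcal{E}$ on $W_p^1$ together with the $L_\infty$ bound on $\chi$ and $D\chi$ gives the analogous control on $\|E\psi\|_{L_p}$ and $\|D_x(E\psi)\|_{L_p}$.

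To verify membership in the zero-trace space, take the approximating sequence $\{\psi_n\}\subset C^\infty([0,1]\times B_1)$ guaranteed by $\psi\in \bH_{p,0}^{\alpha,1}((0,1)\times B_1)$, with $\psi_n(0,x)=0$ and $\psi_n\to\psi$ in $\bH_p^{\alpha,1}((0,1)\times B_1)$. Then $E\psi_n \in C^\infty([0,1]\times \bR^d)$ vanishes at $t=0$ (since $\mathcal{E}$ acts only in $x$) and has compact spatial support because of $\chi$. The norm bound just derived shows $E\psi_n \to E\psi$ in $\bH_p^{\alpha,1}((0,1)\times\bR^d)$, confirming $E\psi \in \bH_{p,0}^{\alpha,1}((0,1)\times \bR^d)$.

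The only genuinely delicate point is the commutation between $\mathcal{E}$ and $\partial_t^\alpha$; but since $\mathcal{E}$ is representable as $(\mathcal{E}f)(x) = \int K(x,y)f(y)\,dy$ (or a finite sum of such kernels composed with local reflections, none involving $t$), the identity above follows by Fubini once $\psi$ is smooth, and then passes to the limit for general $\psi \in \bH_{p,0}^{\alpha,1}$ by the approximation argument. With this in place the corollary follows directly from Theorem \ref{thm1029_1} as indicated.
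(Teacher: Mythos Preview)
Your proposal is correct and follows exactly the approach of the paper, which simply states that the corollary follows from Theorem \ref{thm1029_1} by extending $\psi$ to a function in $\bH_{p,0}^{\alpha,1}((0,1)\times \bR^d)$ with comparable norm. You have supplied the details the paper omits---the choice of a spatial extension operator commuting with $\partial_t^\alpha$, the cutoff, and the verification that the zero initial condition is preserved---so the argument is complete.
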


\begin{proof}
The corollary follows from Theorem \ref{thm1029_1} with an extension of $\psi$ to a function in $\bH_{p,0}^{\alpha,1}\left((0,1)\times \bR^d\right)$ with a comparable norm.
\end{proof}

\bibliographystyle{plain}

\def\cprime{$'$}

\end{document}